\newcommand{\pc}{\mathbf{P}}
\newcommand{\pp}{\mathbb{P}}
\newtheorem{theorem}{Theorem}
\newtheorem{definition}{Definition}
\newtheorem{proposition}{Proposition}
\newproof{proof}{Proof}
\newtheorem{example}{Example}
\journal{Arxiv}
\begin{document}

\begin{frontmatter}

\title{Comparing Dependencies in Probability Theory and General Rough Sets: Part-A}

\author{\textsf{A. Mani}}
\address{Department of Pure Mathematics\\
University of Calcutta\\
9/1B, Jatin Bagchi Road\\
Kolkata(Calcutta)-700029, India\\
\texttt{Email: $a.mani.cms@gmail.com$}\\
Homepage: \url{http://www.logicamani.in}}

\begin{abstract}
The problem of comparing concepts of dependence in general rough sets with those in probability theory had been initiated by the present author in some of her recent papers. This problem relates to the identification of the limitations of translating between the methodologies and possibilities in the identification of concepts. Comparison of ideas of dependence in the approaches had been attempted from a set-valuation based minimalist perspective by the present author. The deviant probability framework has been the result of such an approach. Other Bayesian reasoning perspectives (involving numeric valuations) and frequentist approaches are also known. In this research, duality results are adapted to demonstrate the possibility of improved comparisons across implications between ontologically distinct concepts in a common logic-based framework by the present author. Both positive and negative results are proved that delimit possible comparisons in a clearer way by her. 
\end{abstract}

\begin{keyword}
Rough Objects  \sep Rough Mereology \sep Antichains \sep Axiomatic Approach to Granules \sep Probability theories \sep Granular Operator Spaces \sep Tarski Algebras \sep Full Duality \sep Deviant Probability \sep Rough Dependence \sep Probabilistic Dependence \sep Covering Approximation Spaces
\end{keyword}

\end{frontmatter}


\section{Introduction}\label{dpr:Sec1}

Connections between rough sets and probability theories are of much interest from theoretical and practical perspectives. A number of hybrid and analogical models for handling three-way decision making \cite{yy2012} and neo-Bayesian reasoning are known. Membership functions have also been extensively studied in rough sets (\textsf{RST}) relative to these motivations. More information can be found in  these papers \cite{ps,rk2002} for example. In simple terms, they express the degree to which an object belongs to a set. These have been interpreted in probabilistic perspectives from both Bayesian and non-Bayesian perspectives \cite{zp1984,sp94,zp5,yy2003,ps,sdt05,sdz05}. In the recent paper \cite{am9411}, these interpretations have been critically reviewed and different new problems and methodologies have been proposed by the present author. These are in the light of her work on the contamination problem \cite{am240,am3930}, advances in the philosophy of probability theory \cite{mg05,gd00b,ah2000} and possibility theory. The concept of rough membership is also generalized to granular operator spaces (see \cite{am9114,am6999,am9006,am501}) and characterized by the present author \cite{am9411}. Connections with the rough membership function based semantics \cite{mkc2014} have also been considered in the latter research. At the practical level, contamination is about making less assumptions about data and modeling vagueness as closely as is possible to the object level.

The dependence predicate \cite{am3930,am2015alc,am9501,am6000}, used instead of a probability function, is not directly comparable with the rough dependence functions \cite{am3930} as an additional layer for comparison becomes necessary. This predicate is replaced by a new dependence function based deviant probability theory for easy comparison by the present author in the research papers \cite{am9411,am1410}. This approach is shown to lead to improved methods in three-way decision making. It should be mentioned that while the concept of deviant probability has some relation to probability functions, the focus of the present paper is on measurable spaces.  
This can be helpful from the perspectives of likelihood and possibility theory \cite{dp2015,dp2016}. \emph{In the present research, the framework for the approach is improved further by using a common language and model through duality results. The interpretation and meaning are also considered in detail. It is shown that comparison of implication like operations are justified. This is because fragments of full mathematical dualities are alone meaningful in the context because of ontology. The limitations of the framework invented in this research is explored in much detail and new problems are posed.}

Both the relation-based and cover-based approach to general rough sets suffer from the problem of permitting unreal objects into the discourse - this may or may not be an issue. This is related to absence of related restrictions in information tables. For example, in big data, incompatible attributes that do not correspond to real objects can lead to wastage of resources in computing.   
While granular operator spaces and generalizations thereof can handle this and other key problems through granulations, it is a fact that not all approximations are granular. A serious method of improving the basic structures of non granular approach to rough sets  (especially covering approximation spaces) is also proposed in this research by the present author. 

In the following subsection, some of the essential background is mentioned. Granular operator spaces and concepts of rough objects are mentioned in the following section. In the third section, previous research on dependence by the present author is recapitulated. The main representation and duality results used in this paper are adapted for rough sets in the following section. Foundational aspects of covering approximation spaces and possible topologies are discussed in the fifth section. In the sixth section, the basic steps of the proposed method are outlined from a rough set perspective.  In the following section, a specific granular operator space is investigated from the duality perspective. It is shown that its natural double Heyting algebra semantics is only partly compatible with the approach. In the following section, the comparison question is taken up. Further directions are mentioned in the ninth section.     

\subsection{Some Background}

The concept of \emph{information} can also be defined in many different and non-equivalent ways. In the present author's view \emph{anything that alters or has the potential to alter a given context in a significant positive way is information}. In the contexts of general rough sets, the concept of information must have the following properties:
\begin{itemize}
\item {information must have the potential to alter supervenience relations in the contexts,}
\item {information must be formalizable and }
\item {information must generate concepts of roughly similar collections of properties or objects.}
\end{itemize}

The concept of \emph{information} in the contexts of general probability (subjective and measure-theoretic), must have the following properties:
\begin{itemize}
\item {information must have the potential to alter uncertainty relations in the context,}
\item {information must be formalizable, }
\item {information must have temporal content,}
\item {information must be bounded, }
\item {information must be granular, and}
\item {information must be relativizable.}
\end{itemize}

Further assumptions are common in all approaches and the above is about a minimalism. This has been indicated to suggest that comparisons may work well when ontologies are justified.   

The concept of an information system or table is not essential for obtaining a granular operator space or higher order variants thereof. But it often happens that they arise from such tables.  

Information systems or more correctly, information storage and retrieval systems (also referred to as information tables, descriptive systems, knowledge representation system) are basically representations of structured data tables. In the paper \cite{cd2017}, a critical reflection on the terminology used in rough sets and allied fields can be found with a suggestion to avoid plural meanings for the same term.  When columns for decision are also included, then they are referred to as \emph{decision tables}. Often rough sets arise from \emph{information tables} and decision tables. In the literature on artificial intelligence, database theory and machine learning, the term \emph{information system} refers to an integrated heterogeneous system that has components for collecting, storing and processing data. From a mathematical point of view, this can be described using heterogeneous partial algebraic systems. In rough set contexts, this generality has not been exploited as of this writing. 

An \emph{information table} $\mathcal{I}$, is a relational system of the form \[\mathcal{I}\,=\, \left\langle \mathfrak{O},\, \mathbb{A},\, \{V_{a} :\, a\in \mathbb{A}\},\, \{f_{a} :\, a\in \mathbb{A}\}  \right\rangle \]
with $\mathfrak{O}$, $\mathbb{A}$ and $V_{a}$ being respectively sets of \emph{Objects}, \emph{Attributes} and \emph{Values} respectively.
$f_a \,:\, \mathfrak{O} \longmapsto \wp (V_{a})$ being the valuation map associated with attribute $a\in \mathbb{A}$. Values may also be denoted by the binary function $\nu : \mathbb{A} \times \mathfrak{O} \longmapsto \wp{(V)} $ defined by for any $a\in \mathbb{A}$ and $x\in \mathfrak{O}$, $\nu(a, x) = f_a (x)$.

An information table is \emph{deterministic} (or complete) if
\[(\forall a\in At)(\forall x\in \mathfrak{O}) f_a (x) \text{ is a singleton}.\] It is said to be \emph{indeterministic} (or incomplete) if it is not deterministic that is
\[(\exists a\in At)(\exists x\in \mathfrak{O}) f_a (x) \text{ is not a singleton}.\]

Relations may be derived from information tables by way of conditions of the following form: For $x,\, w\,\in\, \mathfrak{O} $ and $B\,\subseteq\, \mathbb{A} $, $(x,\,w)\,\in\, \sigma $ if and only if $(\mathbf{Q} a, b\in B)\, \Phi(\nu(a,\,x),\, \nu (b,\, w),) $ for some quantifier $\mathbf{Q}$ and formula $\Phi$. The relational system $S = \left\langle \underline{S}, \sigma \right\rangle$ (with $\underline{S} = \mathbb{A}$) is said to be a \emph{general approximation space}. 

In particular if $\sigma$ is defined by the condition Eq.\ref{pawl}, then $\sigma$ is an equivalence relation and $S$ is referred to as an \emph{approximation space}.
\begin{equation*}\label{pawl}
(x, w)\in \sigma \text{ if and only if } (\forall a\in B)\, \nu(a,\,x)\,=\, \nu (a,\, w) 
\end{equation*}

In classical \textsf{RST}, on the power set $\wp (S)$, lower and upper
approximations of a subset $A\in \wp (S)$ operators, apart from the usual Boolean operations, are defined as per: 
\[A^l = \bigcup_{[x]\subseteq A} [x] \; ; \; A^{u} = \bigcup_{[x]\cap A\neq \varnothing } [x],\,\]
with $[x]$ being the equivalence class generated by $x\in S$. If $A, B\in \wp (S)$, then $A$ is said to be \emph{roughly included} in $B$ $(A\sqsubseteq B)$ if and only if $A^l \subseteq B^l$ and $A^u\subseteq B^u$. $A$ is roughly equal to $B$ ($A\approx B$) if and only if $A\sqsubseteq B$ and $B\sqsubseteq A$. The positive, negative and boundary region determined by a subset $A$ are respectively $A^l$, $A^{uc}$ and $A^{u}\setminus A^l$ respectively.

Given a fixed $A\in \wp(S)$, a \emph{Rough membership function} is a map $f_A: S \longmapsto [0,1]$ that are defined via \[(\forall x)\, f_A(x) = \dfrac{card([x]\cap A)}{card([x])}.\] Rough membership functions can be generalized to other general rough structures but lose many of the better properties valid in the classical context.

A \emph{cover} $\mathcal{C}$ on a set $\underline{S}$ is any sub-collection of $\wp(\underline{S})$. It is said to be \emph{proper} just in case $\bigcup{\mathcal{C}} = \underline{S}$. The tuple $\mathfrak{C} =\left\langle\underline{S},\, \mathcal{C}   \right\rangle$ is said to be a \emph{covering approximation space}.

A \emph{neighborhood operator} $n$ on a set $\underline{S}$ is any map of the form $n:\, \underline{S}\longmapsto \wp{\underline{S}}$. It is said to be 
\emph{Reflexive} if \begin{equation}(\forall x\in \underline{S}) \, x\in n(x) \tag{Nbd:Refl}\end{equation}
The collection of all neighborhoods $\mathcal{N}= \{n(x) \, : x\in \underline{S}\}$ of $\underline{S}$ will form a cover if and only if $(\forall x)(\exists y) x\in n(y)$ (anti-seriality). So in particular a reflexive relation on $\underline{S}$ is sufficient to generate a proper cover on it. Of course, the converse association does not necessarily happen in a unique way.  \index{RST!Cover-Based} More details about the   

If $\mathcal{S}$ is a cover of the set $\underline{S}$, then the \emph{neighborhood} of $x\in \underline{S}$ is defined via, \begin{equation}nbd(x)\,=\,\bigcap\{K:\,x\in K\,\in\,\mathcal{S}\} \tag{Cover:Nbd}\end{equation} 

The \emph{minimal description} of an element $x\in \underline{S}$ is defined to be the collection
\begin{equation}\mathrm{md} (x)\,=\,\{A\,:x\in A\in \mathcal{S},\, \forall{B}(x\in B\subseteq A\rightarrow
A= B)\} \tag{Cover:md}\end{equation} 

The \emph{maximal description} of an element $x\in \underline{S}$ is defined to be the collection:
\begin{equation}\mathrm{MD} (x)\,=\,\{A\,:x\in A\in \mathcal{S},\, (\forall{B \in \mathcal{S}})(x\in B\rightarrow
\sim(A\subset B))\} \tag{Cover:MD}\end{equation}

The \emph{indiscernibility} (or friends) of an element $x\,\in \underline{S}$
is defined to be \begin{equation}Fr(x)\,=\,\bigcup\{K:\,x\in K\in\mathcal{S}\} \tag{Cover:FR}\end{equation}

An element $K\in \mathcal{S}$ is said to be \emph{reducible} if and only if \begin{equation}(\forall x\in K)
K\neq\,MD(x) \tag{Cover:Red}\end{equation} The collection $\{nbd(x):\,x\in\,S\}$ will be denoted by $\mathcal{N}$.
The cover obtained by the removal of all reducible elements is called a \emph{covering reduct}.

Boolean algebra with approximation operators constitutes a semantics for classical RST (though not satisfactory). This continues to be true even when $R$ in the approximation space is replaced by any binary relation. More generally it is possible to replace $\wp (S)$ by some set with a part-hood relation and some approximation operators defined on it \cite{am240}. The associated semantic domain in the sense of a collection of restrictions on possible
objects, predicates, constants, functions and low level operations on those is 
referred to as the classical semantic domain for general RST. In contrast, the semantic domain
associated with sets of roughly equivalent or relatively indiscernible objects with
respect to this domain is a \emph{rough semantic domain}. Actually many other
semantic domains, including hybrid semantic domains, can be generated and have been used for example in choice-inclusive semantics \cite{am99}, but these two broad domains will always be - though not necessarily with a nice correspondence between the two. 

The basic problem of contamination is that of mix up of semantic notions during modeling the dynamics within a specific semantic domain. At the practical level it can be in using semantic aspects of the classical semantic domain in modeling interaction in the rough semantic domain - if somebody is reasoning about aggregating two rough objects, then they are not likely to know about the classical ontology associated with their awareness ($\sqcup$ is an example of such an operation) and operations like union and intersection. But at the theoretical level many models assume as much. In other words in classical semantic domain operations and predicates used to describe semantics of specific rough semantic domains may not exist in the rough domain in the first place. For more details the reader is referred to the research papers \cite{am501,am240,am3600,am3930}.

\paragraph{Topology}
A \emph{totally disconnected topological space} is a topological space in which all connected components are singletons. Examples include discrete spaces, $Q$, $\Re \setminus Q$ with the usual topology, $Q_p$ (set of p-adic numbers with usual topology), Baire spaces, Stone spaces (compact, totally disconnected Hausdorff spaces) and Cantor spaces. 

It is well known that any two non-empty compact Hausdorff spaces without isolated points and of zero-dimension (having countable bases consisting of clopen sets) are homeomorphic to each other.

\begin{definition}
A \emph{Priestley Space} is a structure of the form $\left\langle X, \leq , \tau_X   \right\rangle$ in which all of the following hold:
\begin{itemize}
\item {$\left\langle X, \leq   \right\rangle$ is a partially ordered set,}
\item {$\left\langle X, \tau_X   \right\rangle$ is a topological space in which for all $a, b\in X$ satisfying $a\nleq b$, there exists a clopen increasing set $K$ for which $a\in K$ and $b\notin K$ holds and  }
\item {the topology is compact.}
\end{itemize}
So it is a compact totally order-disconnected topological space. The set of clopen increasing sets is denoted by $inclop(X)$.

An \emph{Esakia space} is a Priestley space in which every order ideal generated by clopen sets is again clopen 
\end{definition}

\subsubsection{Sigma Algebras}

\begin{definition}
A \emph{concrete $\sigma$-algebra} (or a \emph{measurable space}) is a tuple of the form $\left\langle X, \mathcal{S}   \right\rangle$ with $X$ being a set and $\mathcal{S}$ is a collection of  subsets of $X$  that is closed under countable union, countable intersection and complementation.
\end{definition}

\begin{definition}
An abstract $\sigma$-algebra $\mathcal{S} = \left\langle \underline{\mathcal{S}}, \cup, \cap, ^c , \emptyset    \right\rangle$ is an infinitary algebra with $\underline{\mathcal{S}}$ being a collection of  sets  and the following hold:
\begin{itemize}
\item {$\mathcal{S}$ is a bounded, complemented distributive lattice with least element $\emptyset$.}
\item {$(\forall x_i) \, \cup_{i=1}^{n} x_i\,\&\, \cap_{i=1}^{n} x_i $ are defined.}
\end{itemize}
\end{definition}

In general, complete Boolean algebras need not be $\sigma$-algebras because infinite distributivity may not hold in the former. Let $\mathcal{S}$ be the $\sigma$-algebra of Lebesgue measurable sets on the unit interval $[0,\, 1]$. Form the quotient $\mathcal{S}_o$ by identifying sets that differ by a set of Lebesgue measure $0$ in $\mathcal{S}$. Then $\mathcal{S}_o$ is a complete Boolean algebra that is not a $\sigma$-algebra. 

The collection $\mathfrak{S}(S)$ of all $\sigma$-algebras on a set $S$ is lattice ordered by inclusion, is bounded  and includes the following sigma-algebras:  
\begin{itemize}
\item {The least or trivial $\sigma$-algebra over $S$ is $\{\emptyset, S\}$.}
\item {The power set $\wp (S)$ of $S$, is the discrete $\sigma$-algebra.}
\item {The collection $\{\emptyset, A, A^c, S\}$ is the simple $\sigma$-algebra generated by the subset $A$.}
\item {The collection of subsets of $S$ that are countable (finite or infinite) or whose complements are countable is a $\sigma$-algebra. This is the $\sigma$-algebra generated by the singletons of $S$. It coincides with the powerset $\wp(S)$ if $S$ is not uncountable.}
\item {The collection of all unions of sets in a countable partition of $S$ is a $\sigma$-algebra.}
\end{itemize}

\begin{proposition}
The covers $\mathcal{K}$ used in a forming a covering approximation space $ S, \mathcal{K}$ does not include the empty set. So no cover can contain a $\sigma$-algebra, but every cover generates a unique $\sigma$-algebra in which it is included.     
\end{proposition}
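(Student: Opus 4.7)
The proposition contains three distinct assertions to be addressed in sequence, and my plan is to treat each by an elementary argument, with the only subtlety lying in making the first convention explicit.

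First I would justify that no cover $\mathcal{K}$ of $\underline{S}$ contains the empty set. Although the definition of cover given in the excerpt permits any sub-collection of $\wp(\underline{S})$, the propriety condition $\bigcup \mathcal{C} = \underline{S}$ together with the standing use of covers in the excerpt (neighborhoods, minimal and maximal descriptions, friend sets, reducibility) is only meaningful for non-empty members: adding $\emptyset$ to $\mathcal{C}$ neither alters $\bigcup \mathcal{C}$ nor contributes to any of the derived operators. I would therefore adopt, or make explicit as a minor refinement of the definition, the standard covering-rough-set convention that $\emptyset \notin \mathcal{K}$; no deeper argument is available since the claim is essentially definitional.

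Second, I would deduce that no cover $\mathcal{K}$ can contain a $\sigma$-algebra as a subcollection. By the abstract definition given just above, any $\sigma$-algebra $\mathcal{S}$ on $\underline{S}$ is a bounded complemented distributive lattice with least element $\emptyset$, so $\emptyset \in \underline{\mathcal{S}}$. Hence $\mathcal{S} \subseteq \mathcal{K}$ would force $\emptyset \in \mathcal{K}$, contradicting the previous step. This is a one-line implication from the first claim and the presence of a bottom element in every $\sigma$-algebra.

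Third, to produce the unique $\sigma$-algebra generated by $\mathcal{K}$, I would appeal to the closure of $\mathfrak{S}(\underline{S})$ under arbitrary meet. Concretely, the class $\mathcal{F} = \{\mathcal{A} \in \mathfrak{S}(\underline{S}) : \mathcal{K} \subseteq \underline{\mathcal{A}}\}$ is non-empty because $\wp(\underline{S}) \in \mathcal{F}$, and the intersection $\bigcap_{\mathcal{A}\in\mathcal{F}} \underline{\mathcal{A}}$ is again closed under complementation and countable unions and intersections (each defining closure property passes through set-theoretic intersection). This yields the smallest $\sigma$-algebra containing $\mathcal{K}$, and the minimality characterization makes it unique.

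The main obstacle is purely conceptual rather than technical: it is the first step, where one must either read the paper's definition of cover as implicitly excluding $\emptyset$ or else upgrade the definition accordingly. Once that convention is in place the remaining two claims follow from the bottom-element axiom for $\sigma$-algebras and the standard generation-by-intersection argument, respectively.
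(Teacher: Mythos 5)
Your proposal is correct, and there is nothing in the paper to diverge from: the paper states this proposition without any proof at all, treating all three claims as immediate. Your argument supplies exactly the standard reasoning the paper leaves implicit --- the convention $\emptyset\notin\mathcal{K}$ for the first claim, the bottom-element axiom $\emptyset\in\underline{\mathcal{S}}$ of the abstract $\sigma$-algebra definition for the second, and generation of the unique smallest $\sigma$-algebra by intersecting the nonempty family $\{\mathcal{A}\in\mathfrak{S}(\underline{S}) : \mathcal{K}\subseteq \underline{\mathcal{A}}\}$ for the third. Your explicit flagging of the first step as definitional rather than provable is in fact a small improvement on the paper's presentation, since its definition of a cover as an arbitrary subcollection of $\wp(\underline{S})$ does not by itself exclude the empty set, so the first assertion must indeed be read as a convention adopted for covering approximation spaces rather than as a consequence of the stated definitions.
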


Let $\mathcal{S}$ be the $\sigma$-algebra of Lebesgue measurable sets on the unit interval $[0,\, 1]$. Form the quotient $\mathcal{S}_o$ by identifying sets that differ by a set of Lebesgue measure $0$ in $\mathcal{S}$. Then $\mathcal{S}_o$ is a complete Boolean algebra that is not a $\sigma$-algebra. 

Let $\mathcal{S}$ be an abstract $\sigma$-algebra, a $\sigma$-ideal of $\mathcal{S}$ is a subset $\mathcal{K}\subseteq \mathcal{S}$, that is closed under countable unions and is an order-ideal relative to set-inclusion. For a $\sigma$-ideal $\mathcal{K}$, let $A\sim B$ for any $A, B\in \mathcal{S}$ if and only if $A\Delta B \in \mathcal{K}$. $\sim$ is an equivalence on $\mathcal{S}$. On the quotient $\mathcal{S\mid K}$ an abstract $\sigma$-algebra structure can be directly induced.

The Loomis-Sikorski theorem generalizes the Stone representation theorem in the following way:

\begin{theorem}
Let $\mathcal{S}$ be an abstract $\sigma$-algebra, then there exist concrete $\sigma$-algebras of the form $S, \mathcal{B}$
and a $\sigma$-ideal $\mathcal{K}$ of $\mathcal{B}$ such that $\mathcal{S}$ is isomorphic to $\mathcal{B\mid K}$. 
\end{theorem}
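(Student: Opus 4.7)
The plan is to leverage the Stone representation theorem for Boolean algebras as a springboard and then repair its failure to preserve countable operations by quotienting out a suitable $\sigma$-ideal of ``small'' sets in the Stone space. First I would treat the abstract $\sigma$-algebra $\mathcal{S}$ simply as a Boolean algebra (forgetting for the moment the countable structure) and apply Stone's theorem to obtain a compact, totally disconnected Hausdorff space $X$ of ultrafilters of $\mathcal{S}$ together with a Boolean-algebra isomorphism $\phi : \mathcal{S} \to \mathrm{Clopen}(X)$ sending each $a\in \mathcal{S}$ to $\hat a = \{U \in X : a \in U\}$.

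Next I would take $\mathcal{B}$ to be the $\sigma$-algebra of Baire subsets of $X$, i.e. the smallest $\sigma$-algebra containing all the clopen sets $\hat a$, so that $\langle X, \mathcal{B}\rangle$ is a concrete $\sigma$-algebra in the sense of the preceding definitions. For the $\sigma$-ideal $\mathcal{K}$ the natural choice is the collection of meager Baire sets, i.e. countable unions of nowhere-dense Baire sets, which is clearly closed under countable unions and under taking subsets inside $\mathcal{B}$. Since $X$ is compact Hausdorff, the Baire category theorem guarantees that no nonempty clopen set is meager; this will be the source of the injectivity of the induced map.

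The main work is then to show that the composition $\pi\circ\phi : \mathcal{S}\to \mathcal{B}\mid\mathcal{K}$, where $\pi$ is the quotient map, is an isomorphism of abstract $\sigma$-algebras. This breaks into three pieces. (i) The map respects countable joins: given $a = \bigvee_{n} a_n$ in $\mathcal{S}$, one must show that the symmetric difference $\hat a \,\triangle\, \bigcup_n \hat a_n$ lies in $\mathcal{K}$. Since each $a_n \le a$ we have $\bigcup_n \hat a_n \subseteq \hat a$, so the difference equals $\hat a \setminus \bigcup_n \hat a_n$, which is an $F_\sigma$ set whose closure has empty interior: any nonempty clopen $\hat b$ contained in it would give $b\le a$ with $b\cdot a_n = 0$ for all $n$, contradicting the universal property of the join $\bigvee_n a_n$ in $\mathcal{S}$. (ii) Surjectivity onto $\mathcal{B}\mid\mathcal{K}$: every Baire set differs from some clopen set by a meager set, which follows by showing the class of sets with this property is a $\sigma$-algebra containing the clopens, using (i). (iii) Injectivity: if $\hat a$ is meager, then by Baire category it has empty interior, but $\hat a$ is itself clopen, forcing $\hat a = \emptyset$ and hence $a = 0$.

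The hard part, as indicated, is step (i): showing that the ``gap'' between the set-theoretic countable union $\bigcup_n \hat a_n$ and the clopen set $\hat a$ representing the abstract join is negligible modulo $\mathcal{K}$. This is precisely the point where the discrepancy between finite Boolean operations (which Stone duality handles perfectly) and genuinely $\sigma$-complete operations has to be absorbed, and it requires both compactness of $X$ (so that closed sets with empty interior are nowhere dense) and the defining universal property of the join in $\mathcal{S}$. Once (i) is in place, (ii) follows by a straightforward $\sigma$-algebra induction and (iii) is immediate from Baire category; the representation asserted in the statement is then realized with the pair $\langle X,\mathcal{B}\rangle$ and $\sigma$-ideal $\mathcal{K}$ defined above.
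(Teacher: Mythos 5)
Your proposal is correct, and it is the classical Loomis--Sikorski argument: Stone space $X$ of ultrafilters, Baire $\sigma$-algebra $\mathcal{B}$ generated by the clopen sets $\hat a$, the $\sigma$-ideal $\mathcal{K}$ of meager Baire sets, with the countable-join defect absorbed into $\mathcal{K}$ and injectivity delivered by the Baire category theorem. The paper itself offers no proof of this statement --- it is quoted as a known representation theorem generalizing Stone duality --- so your write-up in effect supplies exactly the standard proof the paper implicitly relies on; there is no divergence to report. Two small refinements: the gap set $\hat a \setminus \bigcup_n \hat a_n$ is not merely $F_\sigma$ but already closed (a clopen set minus an open set), so once your clopen-basis argument shows it has empty interior it is nowhere dense outright, with no need to pass to a closure; and it is cleaner to define $\mathcal{K}$ simply as $\{B \in \mathcal{B} : B \text{ meager in } X\}$, since in a non-metrizable compact Hausdorff space a meager set need not decompose into nowhere-dense pieces that are themselves Baire, though this never matters in your argument because every set you must show negligible is a closed nowhere-dense Baire set. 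Note also that your step (ii) quietly uses the $\sigma$-completeness of $\mathcal{S}$ to produce the clopen representative $\hat a$ for a countable union $\bigcup_n \hat a_n$; that is legitimate here and is precisely why the same proof also yields the paper's companion statement for $\sigma$-complete Boolean algebras.
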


The theorem can be upgraded to a duality by adding the missing morphisms and functors.

\begin{theorem}
Let $\mathcal{S}$ be a $\sigma$-complete Boolean algebra (Boolean algebras in which every countable collections of subsets have an upper bound), then there exist concrete $\sigma$-algebras of the form $S, \mathcal{B}$
and a $\sigma$-ideal $\mathcal{K}$ of $\mathcal{B}$ such that $\mathcal{S}$ is isomorphic to $\mathcal{B\mid K}$. 
\end{theorem}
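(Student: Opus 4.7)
The plan is to realize $\mathcal{S}$ via its Stone representation as clopen subsets of its ultrafilter space, then enlarge to a $\sigma$-algebra while modding out by the meager sets. I would take $S$ to be the set of ultrafilters of $\mathcal{S}$ with the Stone topology, where the basic opens are $\widehat{a} = \{U\colon a\in U\}$ for $a\in\mathcal{S}$, each of which is clopen. The concrete $\sigma$-algebra $\mathcal{B}$ would be the Baire $\sigma$-algebra on $S$, i.e.\ the $\sigma$-algebra generated by the clopen sets, and $\mathcal{K}\subseteq\mathcal{B}$ would be the $\sigma$-ideal of meager Baire sets.

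The first step is to record that $h(a):=\widehat{a}$ is a Boolean homomorphism preserving finite operations exactly. The key lemma is that it also preserves countable suprema modulo $\mathcal{K}$: if $a=\bigvee_i a_i$ in $\mathcal{S}$, then $\bigcup_i \widehat{a_i}\subseteq \widehat{a}$, and any nonempty clopen $\widehat{b}$ contained in $\widehat{a}\setminus\overline{\bigcup_i\widehat{a_i}}$ would yield $b\leq a$ with $b\wedge a_i = 0$ for all $i$, forcing $a\wedge b^c$ to be an upper bound of $\{a_i\}$ strictly smaller than $a$, a contradiction. Hence $\bigcup_i\widehat{a_i}$ is dense in $\widehat{a}$ and the symmetric difference $\widehat{a}\,\triangle\,\bigcup_i\widehat{a_i}$ is a nowhere-dense $F_\sigma$ lying in $\mathcal{K}$.

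The second step is to show that the induced map $\overline{h}\colon\mathcal{S}\to\mathcal{B}/\mathcal{K}$ is a bijection. Injectivity is immediate because $\widehat{a}$ has nonempty interior whenever $a\neq 0$ and so cannot be meager. Surjectivity is the technical heart and uses the Baire property: every Baire set in a compact Hausdorff zero-dimensional space agrees with some clopen set modulo a meager set, via the standard Baire category argument on the Stone space, exploiting that the clopens form a base closed under Boolean operations and that the meager Baire sets form a $\sigma$-ideal containing every nowhere-dense Baire set.

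The main obstacle, as is typical of Loomis--Sikorski style arguments, is the preservation of \emph{countable} suprema modulo the meager ideal, since countable unions of clopens are open but generally not clopen, forcing genuine topological rather than purely Boolean reasoning. A much shorter alternative route would be to verify directly that a $\sigma$-complete Boolean algebra is already an abstract $\sigma$-algebra in the sense of the preceding definition---countable meets being given by $\bigcap_i a_i = \bigl(\bigcup_i a_i^c\bigr)^c$ via De Morgan---and then appeal to the previous theorem as a black box.
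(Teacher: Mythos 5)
The paper contains no proof of this statement to compare against: it is the classical Loomis--Sikorski theorem in its Boolean-algebra form, quoted directly after the abstract $\sigma$-algebra version with only the remark that the theorem ``can be upgraded to a duality'', and no argument is supplied for either version. Your Stone-space construction is the standard proof of this result, and its skeleton is sound. The density lemma is exactly right and is precisely where $\sigma$-completeness enters: if $a=\bigvee_i a_i$ then a nonempty basic clopen $\widehat{b}\subseteq\widehat{a}\setminus\overline{\bigcup_i\widehat{a_i}}$ would give $a_i\leq a\wedge b^c<a$ for all $i$, contradicting the supremum; note the symmetric difference $\widehat{a}\,\triangle\,\bigcup_i\widehat{a_i}$ is in fact closed nowhere dense, not merely a nowhere-dense $F_\sigma$, since each $\widehat{a_i}\subseteq\widehat{a}$. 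Injectivity via Baire category on the compact Hausdorff Stone space is also correct.

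One step needs repair as stated. You phrase surjectivity as a general topological fact --- ``every Baire set in a compact Hausdorff zero-dimensional space agrees with some clopen set modulo a meager set'' --- but this is false without $\sigma$-completeness. Take $\mathcal{S}$ to be the finite--cofinite algebra on $\mathbb{N}$: its Stone space is the one-point compactification $\mathbb{N}\cup\{\infty\}$, the Baire $\sigma$-algebra is the full power set, the meager Baire sets are only $\emptyset$ and $\{\infty\}$, and the set of even numbers is not clopen modulo meager. The honest argument is a closure induction: show that $\mathcal{M}=\{B\in\mathcal{B}:\, B\,\triangle\, C \text{ is meager for some clopen } C\}$ is a $\sigma$-algebra containing the clopens. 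Complements are immediate; in the countable-union step, given $B_n\,\triangle\,\widehat{c_n}$ meager, one must replace the open set $\bigcup_n\widehat{c_n}$ by the clopen $\widehat{c}$ with $c=\bigvee_n c_n$ --- which exists by $\sigma$-completeness and differs from the union by a nowhere dense set \emph{by your own key lemma}. So the lemma you isolated must be used a second time, threaded through the induction; once that is done the proof is complete. Your shorter alternative --- observe that countable meets come from countable joins by De Morgan and invoke the preceding Loomis--Sikorski theorem as a black box --- is evidently what the paper's arrangement intends, though the paper's definition of an abstract $\sigma$-algebra stipulates that its carrier is a collection of sets, so strictly that reduction presupposes a set representation and the topological argument remains the self-contained path.
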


From this result it is also possible to show by a contradiction argument that 
\begin{proposition}
$\sigma$-complete Boolean algebras without atoms that satisfy the countable chain condition have no $\sigma$-complete ultrafilters and cannot be represented as an algebra of sets.
\end{proposition}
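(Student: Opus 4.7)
The plan is to prove the proposition by contradiction, showing first that no $\sigma$-complete ultrafilter can exist and then deriving the representation impossibility as an immediate corollary. The key preliminary step is to observe the dual formulation of $\sigma$-completeness for an ultrafilter $U$ on a $\sigma$-complete Boolean algebra $B$: by taking complements, $U$ is closed under countable meets if and only if whenever $\bigvee_{n} x_{n} \in U$ for a countable family, some $x_{n} \in U$. I would record this equivalence at the outset, since it is exactly what converts the assumed $\sigma$-completeness of $U$ into a prime-like property under countable joins.

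Next, suppose toward contradiction that $U$ is a $\sigma$-complete ultrafilter on $B$. I would apply Zorn's lemma to the set of antichains contained in $(B \setminus U) \setminus \{0\}$, producing a maximal such antichain $\mathcal{A}$. The countable chain condition forces $\mathcal{A} = \{c_{n}\}_{n \in \mbN}$ to be countable, and the $\sigma$-completeness of $B$ guarantees that $s = \bigvee_{n} c_{n}$ exists. I would then claim that $s = 1$. If instead $d = s^{c} \neq 0$, atomlessness supplies nonzero disjoint $d_{1}, d_{2}$ with $d_{1} \vee d_{2} = d$; since $U$ is an ultrafilter at most one of $d_{1}, d_{2}$ lies in $U$, so the other is a nonzero element of $B \setminus U$ disjoint from every $c_{n}$, contradicting maximality of $\mathcal{A}$. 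Hence $\bigvee_{n} c_{n} = 1 \in U$, and by the preliminary equivalence some $c_{n}$ lies in $U$, contradicting the construction of $\mathcal{A} \subseteq B \setminus U$.

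For the second assertion, I would argue that if $B$ were representable as an algebra of sets in the $\sigma$-sense, namely as a $\sigma$-subalgebra of some $\wp(X)$ with $X \neq \emptyset$, then for each $x \in X$ the collection $U_{x} = \{A \in B : x \in A\}$ would be an ultrafilter on $B$ closed under countable intersections, i.e.\ a $\sigma$-complete ultrafilter; this contradicts the first part. Thus no such representation is possible.

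The main obstacle, I expect, will be the maximal-antichain step: one must simultaneously exploit atomlessness (to rule out residual mass outside $\bigvee_{n} c_{n}$), the countable chain condition (to keep $\mathcal{A}$ countable so that $\sigma$-completeness applies), and the ultrafilter dichotomy (to guarantee that the split piece $d_{1}$ or $d_{2}$ can be chosen outside $U$). A minor auxiliary point is to justify rigorously the equivalence between closure under countable meets and the ``prime under countable joins'' property; this uses De Morgan together with the defining property of ultrafilters, and I would write it out carefully, as the whole contradiction rests on it.
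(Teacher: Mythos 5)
Your proof is correct, and it is genuinely more explicit than what the paper provides: the paper offers no written proof at all, only the remark that the proposition ``is possible to show by a contradiction argument'' from the Loomis--Sikorski representation theorem just stated (the intended route being that a $\sigma$-representation by sets would contradict the fact that such algebras arise only as quotients $\mathcal{B}\mid\mathcal{K}$ of concrete $\sigma$-algebras by $\sigma$-ideals). Your argument bypasses that machinery entirely and is self-contained. The preliminary equivalence (closure of an ultrafilter under countable meets iff primeness under countable joins) is correctly reduced to De Morgan plus properness of $U$; the maximal-antichain step is sound, since unions of chains of antichains in $(B\setminus U)\setminus\{0\}$ are again such antichains (so Zorn applies), ccc makes $\mathcal{A}=\{c_n\}$ countable so that $s=\bigvee_n c_n$ exists by $\sigma$-completeness of $B$, and if $d=s^c\neq 0$ the atomless splitting $d=d_1\vee d_2$ with $d_1\wedge d_2=0$ forces at least one half to be a nonzero element of $B\setminus U$ disjoint from every $c_n$, contradicting maximality; hence $s=1\in U$ and primeness puts some $c_n$ in $U$, which is absurd. (The degenerate case $\mathcal{A}=\varnothing$ causes no trouble: the same splitting of $d=1$ contradicts maximality.) You are also right to read ``represented as an algebra of sets'' in the $\sigma$-sense, with countable joins realized as unions; under the literal reading Stone's theorem would falsify the claim, and under your reading the point filters $U_x=\{A : x\in A\}$ are $\sigma$-complete ultrafilters, so the second clause follows from the first exactly as you say. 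What each approach buys: the paper's intended derivation makes the proposition a corollary of the representation theory it has just developed, while your direct combinatorial proof isolates precisely how each hypothesis is used (ccc for countability of the antichain, $\sigma$-completeness of $B$ and of $U$ for forming $s$ and transferring primeness, atomlessness for the splitting) and establishes the logically prior clause --- nonexistence of $\sigma$-complete ultrafilters --- first, with non-representability as an immediate consequence.
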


The $\sigma$-algebra of a measure space has a natural pseudo-metric $\varrho$ associated - the measure of the symmetric difference between two subsets being the pseudo metric distance between the subsets in question defined as   

\[(\forall A, B\in \mathcal{S})\,\varrho(A, B) = \mu (A\Delta B)\]

\section{Granular Operator Spaces and Variants}

Granular operator spaces and related variants are not necessarily basic systems in the context of application of general rough sets. They are powerful abstractions for handling semantic questions, formulation of semantics and the inverse problem 

\begin{definition}\label{gos}
A \emph{Granular Operator Space}\cite{am6999} $S$ is a structure of the form $S \, =\, \left\langle \underline{S}, \mathcal{G}, l , u\right\rangle$ with $\underline{S}$ being a set, $\mathcal{G}$ an \emph{admissible granulation}(defined below) over $S$ and $l, u$ being operators $:\wp(\underline{S})\longmapsto \wp(\underline{S})$ ($\wp(\underline{S})$ denotes the power set of $\underline{S}$) satisfying the following ($\underline{S}$ will be replaced with $S$ if clear from the context. \textsf{Lower and upper case alphabets will both be used for subsets} ):

\begin{align*}
a^l \subseteq a\,\&\,a^{ll}\, =\,a^l \,\&\, a^{u} \subset a^{uu}  \\
(a\subseteq b \longrightarrow a^l \subseteq b^l \,\&\,a^u \subseteq b^u)\\
\emptyset^l\, =\,\emptyset \,\&\,\emptyset^u\, =\,\emptyset \,\&\,\underline{S}^{l}\subseteq S \,\&\, \underline{S}^{u}\subseteq S.
\end{align*}

Here, \emph{Admissible granulations} are granulations $\mathcal{G}$ that satisfy the following three conditions ($t$ is a term operation formed from the set operations $\cup, \cap, ^c, 1, \emptyset$):

\begin{align*}
(\forall a \exists
b_{1},\ldots b_{r}\in \mathcal{G})\, t(b_{1},\,b_{2}, \ldots \,b_{r})=a^{l} \\
\tag{Weak RA, WRA} \mathrm{and}\: (\forall a)\,(\exists
b_{1},\,\ldots\,b_{r}\in \mathcal{G})\,t(b_{1},\,b_{2}, \ldots \,b_{r}) =
a^{u},\\
\tag{Lower Stability, LS}{(\forall b \in
\mathcal{G})(\forall {a\in \wp(\underline{S}) })\, ( b\subseteq a\,\longrightarrow\, b \subseteq a^{l}),}\\
\tag{Full Underlap, FU}{(\forall
a,\,b\in\mathcal{G})(\exists
z\in \wp(\underline{S}) )\, a\subset z,\,b \subset z\,\&\,z^{l}\, =\,z^{u}\, =\,z,}
\end{align*}
\end{definition}

\begin{flushleft}
\textbf{Remarks}: 
\end{flushleft}
\begin{itemize}
\item {The concept of admissible granulation was defined for \textsf{RYS} in \cite{am240} using parthoods instead of set inclusion and relative to \textsf{RYS}, $\pc\, =\,\subseteq$, $\pp\, =\,\subset$. }
\item {The conditions defining admissible granulations mean that every approximation is somehow representable by granules in a set theoretic way, that granules are lower definite, and that all pairs of distinct granules are contained in definite objects.}
\end{itemize}

On $\wp(\underline{S})$, the relation $\sqsubset$ is defined by \begin{equation}A \sqsubset B \text{ if and only if } A^l \subseteq B^l \,\&\, A^u \subseteq B^u.\end{equation} The rough equality relation on $\wp(\underline{S})$ is defined via $A\approx B \text{ if and only if } A\sqsubset B  \, \&\,B \sqsubset A$. 

Regarding the quotient $\wp(\underline{S})|\approx$ as a subset of $\wp(\underline{S})$, the order $\Subset$ will be defined as per \begin{equation}\alpha \Subset \beta \text{ if and only if } \alpha^l \subseteq \beta^l \,\&\, \alpha^u \subseteq \beta^u.\end{equation} Here $\alpha^l$ is being interpreted as the lower approximation of $\alpha$ and so on. $\Subset$ will be referred to as the \emph{basic rough order}.

\begin{definition}
By a \emph{roughly consistent object}\index{Object!Roughly Consistent} will be meant a set of subsets of $\underline{S}$ of the form  $H\, =\,\{A ; (\forall B\in H)\,A^l =B^l, A^u\, =\,B^u \}$. The set of all roughly consistent objects is partially ordered by the inclusion relation. Relative this maximal roughly consistent objects will be referred to as \emph{rough objects}\index{Object!Rough}. By \emph{definite rough objects}\index{Object!Definite}, will be meant rough objects of the form $H$ that satisfy 
\begin{equation}(\forall A \in H) \, A^{ll}\, =\,A^l \,\&\, A^{uu}\, =\,A^{u}. \end{equation} 
\end{definition}

\begin{proposition}
$\Subset$ is a bounded partial order on $\underline{S}|\approx$. 
\end{proposition}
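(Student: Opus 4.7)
The proposition is essentially a bookkeeping check: one must verify well-definedness of $\Subset$ on the quotient and then the three poset axioms plus boundedness. My plan would proceed in the following order.

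First, I would pin down the meaning of $\alpha^l$ and $\alpha^u$ when $\alpha \in \wp(\underline{S})|\approx$. Since $\alpha$ is a $\approx$-class, the only coherent reading is: pick any representative $A \in \alpha$ and set $\alpha^l := A^l$, $\alpha^u := A^u$. The definition of $\approx$ immediately forces $A^l = B^l$ and $A^u = B^u$ whenever $A, B \in \alpha$, so this is independent of representative. With this in place, checking that $\Subset$ is well-defined on classes reduces to: if $A \approx A'$ and $B \approx B'$, then $A^l \subseteq B^l$ and $A^u \subseteq B^u$ iff $A'^l \subseteq B'^l$ and $A'^u \subseteq B'^u$, which is immediate from equality of the corresponding lower and upper sets.

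Next I would verify the three order axioms. Reflexivity ($\alpha \Subset \alpha$) is trivial from $A^l \subseteq A^l$ and $A^u \subseteq A^u$. Transitivity follows at once from transitivity of $\subseteq$ on $\wp(\underline{S})$ applied componentwise. For antisymmetry, if $\alpha \Subset \beta$ and $\beta \Subset \alpha$ with representatives $A$ and $B$, then $A^l \subseteq B^l \subseteq A^l$ and $A^u \subseteq B^u \subseteq A^u$, so $A^l = B^l$ and $A^u = B^u$; hence $A \approx B$ and $\alpha = \beta$ in $\wp(\underline{S})|\approx$.

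For boundedness I would read off the bottom and top directly from the axioms of Definition~\ref{gos}. The class $[\emptyset]$ is a least element because $\emptyset^l = \emptyset^u = \emptyset$ and for any $A$ we have $\emptyset \subseteq A^l$ and $\emptyset \subseteq A^u$, so $[\emptyset] \Subset \alpha$ for every $\alpha$. The class $[\underline{S}]$ is a greatest element: by the monotonicity axiom $(a \subseteq b \longrightarrow a^l \subseteq b^l \,\&\, a^u \subseteq b^u)$, for every $A \subseteq \underline{S}$ we obtain $A^l \subseteq \underline{S}^l$ and $A^u \subseteq \underline{S}^u$, hence $\alpha \Subset [\underline{S}]$ for every $\alpha$.

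None of the steps is really an obstacle; the only place one must be careful is the well-definedness clause, where it is tempting to conflate the operators on $\wp(\underline{S})$ with operators on the quotient. The payoff is purely structural: the axioms already built into a granular operator space were engineered to make this quotient a bounded poset, and nothing stronger (e.g.\ lattice operations) is claimed here.
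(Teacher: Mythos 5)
Your proof is correct and takes essentially the same route as the paper's: direct verification of reflexivity, antisymmetry, and transitivity componentwise from set inclusion, with bottom $[\emptyset]$ and top the class of $\underline{S}$ (which the paper records as the pairs $(\emptyset,\emptyset)$ and $(S^l,S^u)$, noting the latter need not be $(S,S)$ --- exactly the point you handle by invoking monotonicity rather than any definiteness of $\underline{S}$). Your explicit well-definedness check on the quotient is a detail the paper leaves implicit, but adds nothing structurally different.
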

\begin{proof}
Reflexivity is obvious.  If $\alpha \Subset \beta$ and $\beta \Subset \alpha$, then it follows that $\alpha^l\, =\,\beta^l$ and $\alpha^u\, =\,\beta^u$ and so antisymmetry holds. 

If $\alpha \Subset \beta$, $\beta \Subset \gamma$, then the transitivity of set inclusion induces transitivity of $\Subset$.
The poset is bounded by $0\, =\,(\emptyset , \emptyset)$ and $1\, =\,(S^l , S^u)$. Note that $1$ need not coincide with $(S, S)$. 
\qed
\end{proof}

On $\wp(\underline{S})$, the relation $\sqsubset$ is defined by \[A \sqsubset B \text{ if and only if } A^l \subseteq B^l \,\&\, A^u \subseteq B^u.\] The rough equality relation on $\wp(\underline{S})$ is defined via $A\approx B \text{ if and only if } A\sqsubset B  \, \&\,B \sqsubset A$. 

Rough membership functions make essential use of ideas of association of points in $S$ with granules - this is true in the classical case, but as the underlying structures are generalized the idea can be abandoned 

\begin{definition}
Given an admissible granulation $\mathcal{G}$ on $S$, \emph{granular neighborhood maps} will be maps of the form $\gamma_{t} : S \longmapsto \wp(S)$ ($t$ being a type) definable as per the following schemas: 
\begin{align*}
\tag{Cap} \gamma_{\cap}(x) = \cap \{g : x\in g \in \mathcal{G}\}   \\
\tag{Cup} \gamma_{\cup}(x) = \cup \{g : x\in g \in \mathcal{G}\} \\
\tag{Choice} \gamma_{ch}(x) = \lambda \{g : x\in g \in \mathcal{G}\}\\
\tag{QIu} \gamma_{ch}(x) = (\cap \{g : x\in g \in \mathcal{G}\})^u\\
\tag{QIu} \gamma_{ch}(x) = (\cup \{g : x\in g \in \mathcal{G}\})^l\\
\tag{pCap} \gamma_{p}(x) = 
\begin{cases}
\cap \{g : x\in g \in \mathcal{G}\}, & \text{if RHS is in } \mathcal{G}  \\
\text{undefined }, & \mathrm{else}.
\end{cases}
\end{align*}
with $\lambda $ being a choice map  $:\wp (\mathcal{G}) \longmapsto \mathcal{G}$ satisfying $(\forall H \in \wp (\mathcal{G}))\, \lambda(H) \in H$ will be termed point maps. 
\end{definition}

\begin{definition}
A General Rough Membership Function $\omega_{\gamma}$ is a function $: {S} \times \wp(S)\,\longmapsto [0,1]$ defined as follows:   
\begin{equation}
(\forall x \in S)(\forall A\in \wp (S))\, \omega_{\gamma} (x, A) = \dfrac{\textsf{Card}(\gamma(x) \cap A)}{\textsf{Card}(\gamma(x))}.                                                                                                                                                                                                                                                                                 \end{equation}
\end{definition}

\begin{definition}\label{baserel}
The general rough membership induces three relations $R_{A\omega}$ on $S$ for each $A\in \wp(S)$, $\sim_{x\omega}$ on $\wp(S)$ for each $x\in \underline{S}$ and $\backsimeq$ on $S\times \wp(S)$ defined as below:
\begin{align}
\text{For } x, y\in S\, \& \, A\in \wp(S)\; R_{A\omega}xy \leftrightarrow \omega_{\gamma}(x, A) =  \omega_{\gamma}(y, A) \\
\text{For } x, \in S\, \& \, A, B\in \wp(S)\; \sim_{x\omega}A B \leftrightarrow \omega_{\gamma}(x, A) =  \omega_{\gamma}(x, B) \\
\text{For } x, y \in S\, \& \, A, B\in \wp(S)\; \backsimeq_{\omega}(x,A)(y, B) \leftrightarrow \omega_{\gamma}(x, A) =  \omega_{\gamma}(y, B)
\end{align}
\end{definition}

\begin{proposition}
 In the context of Def.\ref{baserel}, all of the three relations are equivalences.
\end{proposition}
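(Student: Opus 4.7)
The plan is to observe that each of the three relations is defined as the kernel of the rough membership function $\omega_{\gamma}$ under a suitable choice of arguments, and then invoke the general fact that the kernel of any function into a set (here $[0,1]$) is an equivalence relation. Concretely, for a function $f : X \longmapsto Y$, the relation $x \equiv_f y \leftrightarrow f(x) = f(y)$ is reflexive, symmetric and transitive because equality on $Y$ is. I would apply this observation three times, with the appropriate domain restriction in each case.

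For $R_{A\omega}$, fix $A \in \wp(S)$ and regard $\omega_{\gamma}(\cdot, A)$ as a function $S \longmapsto [0,1]$; then $R_{A\omega}$ is exactly its kernel. For $\sim_{x\omega}$, fix $x \in \underline{S}$ and regard $\omega_{\gamma}(x, \cdot)$ as a function $\wp(S) \longmapsto [0,1]$; then $\sim_{x\omega}$ is its kernel. For $\backsimeq_{\omega}$, no argument is fixed: view $\omega_{\gamma}$ itself as a function $S \times \wp(S) \longmapsto [0,1]$ and note that $\backsimeq_{\omega}$ is its kernel as a relation on $S \times \wp(S)$. In each case reflexivity follows from $\omega_{\gamma}(x,A) = \omega_{\gamma}(x,A)$, symmetry from the symmetry of equality in $[0,1]$, and transitivity by chaining two equalities in $[0,1]$.

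Well-definedness of $\omega_{\gamma}$ on its intended domain needs to be checked briefly: from the definition, $\omega_{\gamma}(x,A) = \textsf{Card}(\gamma(x)\cap A)/\textsf{Card}(\gamma(x))$ requires $\textsf{Card}(\gamma(x)) \neq 0$, which follows as soon as $\gamma(x)$ is a nonempty granule-derived set; for the schemas Cap, Cup, Choice, QIu, this is immediate from the granulation axioms (admissibility forces granules to be nonempty via Weak RA and Full Underlap), while for pCap one restricts attention to those $x$ where $\gamma_p(x)$ is defined. Once well-definedness is ensured, there is no further obstacle: the entire claim reduces to the elementary lemma that kernels of functions are equivalence relations.

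There is essentially no hard step here; the only thing to be careful about is to make the kernel-of-a-function viewpoint explicit for each of the three relations, and to note that the codomain $[0,1]$ carries the usual equality and plays no other role. I would therefore present the proof as a single paragraph stating the kernel lemma and then instantiating it three times.
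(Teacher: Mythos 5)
Your proof is correct and matches the paper's treatment: the paper states this proposition without any proof, regarding it as the immediate observation that each of $R_{A\omega}$, $\sim_{x\omega}$ and $\backsimeq_{\omega}$ is the kernel of $\omega_{\gamma}$ with the appropriate arguments fixed, which is precisely your argument. Your added care about well-definedness (nonemptiness and finiteness of $\gamma(x)$, and the partiality of the pCap schema) goes slightly beyond what the paper records but is a harmless and reasonable precaution.
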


The order induced on the quotients (regarded as set of some subsets of the power set of their domains) by the inclusion order on power sets of their domain are of natural interest. They can also help in solving the following long problem:

\begin{flushleft}
{\bf Problem:}\end{flushleft}  \textsf{Which choices of subsets of $\wp(S)$ and elements of $S$ have better likelihood of coverage of the actual contamination-free rough semantics than others?}

In the context of this class of problems, the concept of dependence space as in \cite{nov} is related as a special case from a mathematical perspective.

\begin{definition}
A \emph{dependence space} is a tuple of the form $\left\langle A, F \right\rangle$ with $A$ being a set and $F$ a congruence on the semilattice $\left\langle \wp (A), \cup \right\rangle$.  
\end{definition}

\begin{theorem}
A general rough membership function has the following properties:
\begin{align*}
\tag{Monotony} (\forall x \in S)(\forall A, B\in \wp(S))(A\subseteq B \longrightarrow \omega_{\gamma} (x, A) \leq \omega_{\gamma} (x, B))\\
\tag{Empty Set} (\forall x \in S)\,\omega_{\gamma} (x, \varnothing) =0\\
\tag{Top}  (\forall x \in S)\,\omega_{\gamma} (x, S) =1\\
\tag{Granular Equality} (\forall x, y\in S)(\forall A\in \wp(S))(\gamma(x) = \gamma(y) \longrightarrow \omega_\gamma (x, A) = \omega_\gamma (y, A))\\
\tag{G-Monotony} (\forall x, y \in S)(\forall A \in \wp(S))(\gamma(x) \subseteq \gamma (y) \longrightarrow \omega_{\gamma} (x, A) \leq \omega_{\gamma} (y, A))
\end{align*}
\end{theorem}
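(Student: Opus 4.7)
Each of the five items follows directly from the defining formula $\omega_{\gamma}(x, A) = \textsf{Card}(\gamma(x) \cap A)/\textsf{Card}(\gamma(x))$, provided we assume implicitly that $\gamma(x)$ is finite and nonempty for every $x \in S$; without this the ratio need not be a well-defined element of $[0,1]$. I would flag this standing hypothesis at the outset, because nothing in the admissible-granulation axioms (WRA, LS, FU) or in the schemata for $\gamma_\cap$, $\gamma_\cup$, $\gamma_{ch}$ actually forces finiteness.

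For Monotony, $A \subseteq B$ yields $\gamma(x)\cap A \subseteq \gamma(x)\cap B$ and hence $\textsf{Card}(\gamma(x)\cap A) \leq \textsf{Card}(\gamma(x)\cap B)$; dividing by the positive constant $\textsf{Card}(\gamma(x))$ closes the case. Empty Set is immediate from $\gamma(x)\cap\varnothing = \varnothing$, giving numerator $0$. Top follows from $\gamma(x)\cap S = \gamma(x)$, reducing the fraction to $1$. Granular Equality is also direct: if $\gamma(x)=\gamma(y)$ then both the numerator $\textsf{Card}(\gamma(x)\cap A)$ and the denominator $\textsf{Card}(\gamma(x))$ coincide with their $y$-counterparts for every $A \in \wp(S)$.

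The main obstacle, and the step I would spend most of the argument on, is G-Monotony as literally stated. A counterexample is immediate: take $\gamma(x)=\{a\}$, $\gamma(y)=\{a,b,c\}$ and $A=\{a\}$, so that $\omega_\gamma(x,A)=1$ while $\omega_\gamma(y,A)=1/3$, and the claimed inequality fails. So set-inclusion of granules does not entail the numerical inequality in general. To recover a true statement I would either strengthen the hypothesis (for instance to $\gamma(x)\subseteq\gamma(y)\subseteq A$, where both sides equal $1$; or to $\gamma(y)\setminus\gamma(x)\subseteq A$, where a short count gives the inequality), or replace the conclusion by the set-level comparison $\gamma(x)\cap A \subseteq \gamma(y)\cap A$, which follows from intersection-monotonicity exactly as in the Monotony case. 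My plan is to record this gap explicitly in the proof, isolate whichever corrected variant matches the subsequent use of G-Monotony in the paper, and then discharge it via the same intersection-monotonicity template employed for the first item.
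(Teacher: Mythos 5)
Your verification of the first four properties coincides with what the paper intends: the paper's entire proof is the single sentence that the result follows ``by direct verification,'' and for Monotony, Empty Set, Top and Granular Equality your intersection-monotonicity computation is exactly that routine check (including the observation $\gamma(x)\cap S=\gamma(x)$ for Top). The substantive divergence is G-Monotony, and there you are right and the paper's claim of easy direct verification is wrong: your counterexample is genuine and is realizable inside the paper's own framework. For instance, on $S=\{a,b,c,d\}$ with granulation $\mathcal{G}=\{\{a\},\{a,b,c\},\{d\}\}$ (admissible under the usual granule-union approximations, with $z=S$ witnessing FU) and $\gamma=\gamma_{\cap}$, one gets $\gamma(a)=\{a\}\subseteq\{a,b,c\}=\gamma(b)$ while $\omega_{\gamma}(a,\{a\})=1>\tfrac{1}{3}=\omega_{\gamma}(b,\{a\})$; dually, taking $A=\{b\}$ makes the reverse inequality fail as well, so no orientation of G-Monotony survives. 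The point the paper's one-liner glosses over is that enlarging the granule enlarges numerator and denominator simultaneously, so the ratio is not monotone in $\gamma(x)$. Your proposed repairs are all sound: the variant with $\gamma(y)\setminus\gamma(x)\subseteq A$ reduces, with $k=\mathrm{Card}(\gamma(x)\cap A)$, $m=\mathrm{Card}(\gamma(x))$, $d=\mathrm{Card}(\gamma(y)\setminus\gamma(x))$, to the cross-multiplication $k/m\leq (k+d)/(m+d)$, valid since $k\leq m$; and since G-Monotony is never invoked anywhere later in the paper (the text immediately afterwards concedes that most classical properties fail in general contexts), any of your corrected variants can be adopted without downstream cost. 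Your standing finiteness/nonemptiness hypothesis is likewise a real omission in the paper: $S$ is nowhere assumed finite, $\mathrm{Card}(\gamma(x))$ must be finite and nonzero for $\omega_{\gamma}$ to be a well-defined element of $[0,1]$, and the schemas $\gamma_{p}$ (explicitly partial) and $\gamma_{ch}$ (undefined on an empty granule family) can leave $\omega_{\gamma}$ partial, so your flag at the outset is the correct way to state the theorem honestly.
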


\begin{proof}
The proof consists in direct verification and is not hard.
\qed
\end{proof}

Most of the better properties that hold for rough membership functions in the classical context fail to hold in general contexts.

The concept of \emph{general granular operator spaces} had been introduced by the present author \cite{am9114,am6900} as a proper generalization of that of granular operator spaces. The main difference is in the replacement of $\subset$ by arbitrary \emph{part of} ($\pc$) relations in the axioms of admissible granules and inclusion of $\pc$ in the signature of the structure.
\begin{definition}
A \emph{General Granular Operator Space} (\textsf{GSP}) $S$ shall be a structure of the form $S \, =\, \left\langle \underline{S}, \mathcal{G}, l , u, \pc \right\rangle$ with $\underline{S}$ being a set, $\mathcal{G}$ an \emph{admissible granulation}(defined below) over $S$, $l, u$ being operators $:\wp(\underline{S})\longmapsto \wp(\underline{S})$ and $\pc$ being a definable binary generalized transitive predicate (for parthood) on $\wp(\underline{S})$ satisfying the same conditions as in Def.\ref{gos} except for those on admissible granulations (generalized transitivity can be any proper nontrivial generalization of parthood (see \cite{am501}). $\pp$ is  proper parthood (defined via $\pp ab$ iff $\pc ab \,\&\,\neg \pc ba$) and $t$ is a term operation formed from set operations):

\begin{align*}
(\forall x \exists
y_{1},\ldots y_{r}\in \mathcal{G})\, t(y_{1},\,y_{2}, \ldots \,y_{r})=x^{l} \\
\tag{Weak RA, WRA} \mathrm{and}\: (\forall x)\,(\exists
y_{1},\,\ldots\,y_{r}\in \mathcal{G})\,t(y_{1},\,y_{2}, \ldots \,y_{r}) =
x^{u},\\
\tag{Lower Stability, LS}{(\forall y \in
\mathcal{G})(\forall {x\in \wp(\underline{S}) })\, ( \pc yx\,\longrightarrow\, \pc yx^{l}),}\\
\tag{Full Underlap, FU}{(\forall
x,\,y\in\mathcal{G})(\exists
z\in \wp(\underline{S}) )\, \pp xz,\,\&\,\pp yz\,\&\,z^{l}\, =\,z^{u}\, =\,z,}
\end{align*}
\end{definition}

It is sometimes more convenient to use only sets and subsets in the formalism as these are the kinds of objects that may be observed by agents and such a formalism would be more suited for reformulation in formal languages. This justifies the introduction of higher order granular operator spaces \cite{am9006} by the present author.

\begin{definition}
An element $x\in\mathbb{S}$ will be said to be \emph{lower definite} (resp. \emph{upper definite}) if and only if $x^l\, =\,x$ (resp. $x^u\, =\,x$) and \emph{definite}, when it is both lower and upper definite. $x\in \mathbb{S}$ will also be said to be \emph{weakly upper definite} (resp \emph{weakly definite}) if and only if $ x^u\, =\,x^{uu} $ (resp $ x^u\, =\,x^{uu} \,\&\, x^l =x$ ). Any one of these five concepts may be chosen as a concept of \emph{crispness}. 
\end{definition}

\subsection{Rough Objects}

The concept of rough objects must necessarily relate to some of the following:
\begin{itemize}
\item {object level properties of approximations in a suitable semantic domain,}
\item {object level properties of discernibility in a suitable semantic domain,}
\item {object level properties of indiscernibility in a suitable semantic domain,}
\item {properties of abstractions from approximations,}
\item {properties of abstractions of indiscernibility, or}
\item {some higher level semantic features (possibly constructed on the basis of some assumptions about approximations).}
\end{itemize}

A rough object cannot be known exactly in the rough semantic domain, but can be represented through various means. This single statement hides deep philosophical aspects that are very relevant in practice if realizable in concrete terms.  The following concepts of \emph{rough objects} have been either considered in the literature (see \cite{am240,am9006}) or are reasonable concepts:
\begin{itemize}
\item[RL] {$x\in \mathbb{S}$ is a lower rough object if and only if $\neg (x^l\, =\,x) $. }
\item[RU] {$x\in \mathbb{S}$ is a upper rough object if and only if $\neg (x\, =\,x^u) $. }
\item[RW] {$x\in \mathbb{S}$ is a weakly upper rough object if and only if $\neg (x^u\, =\,x^{uu}) $. }
\item[RB] {$x\in \mathbb{S}$ is a rough object if and only if $\neg (x^l\, =\,x^u) $. The condition is equivalent to the boundary being nonempty. }
\item[RD] {\emph{Any pair of definite elements} of the form $(a , b)$ satisfying $a < b $}
\item[RP] {\emph{Any distinct pair of elements} of the form $(x^l ,x^u)$.}
\item[RIA] {Elements in an \emph{interval of the form} $(x^l, x^u)$.}
\item[RI] {Elements in an \emph{interval of the form} $(a, b)$ satisfying $a\leq b$ with $a, b$ being definite elements.}
\item[ET] {In esoteric rough sets \cite{am24}, triples of the form $(x^l, x^{lu}, x^u)$ can be taken as rough objects.} 
\item[RND] {A \emph{non-definite element in a RYS}(see \cite{am240}), that is an $x$ satisfying $\neg \pc x^u x^l   $. This can have a far more complex structure when multiple approximations are available.}
\item[ROP]{If a weak negation or complementation $^c$ is available, then orthopairs of the form $(x^l, x^uc)$ can also be taken as representations of \emph{rough objects}.}
\end{itemize}

All of the above concepts of a rough object except for the last two are directly usable in a higher granular operator space. 

The positive region of a $x\in \mathbb{S}$ is $x^{l}$, while its negative region is $x^{uc}$ -- this region is independent from $x$ in the sense of attributes being distinct, but not in the sense of derivability or inference by way of rules. These derived concepts provide additional approaches to specifying subtypes of rough objects and related decision making strategies.

\begin{itemize}
\item {$POS(x)\, =\,x^{l} $ and $NEG(x)= x^{uc} $ by definition.}
\item {$x$ is \emph{roughly definable} if $POS(x)\neq\emptyset $ and $NEG(x)\neq \emptyset $}
\item {$x$ is \emph{externally undefinable} if $POS(x)\neq \emptyset $ and $NEG(x)=\emptyset $}
\item {$x$ is \emph{internally undefinable} if $POS(x)=\emptyset $ and $NEG(x)\neq \emptyset $}
\item {$x$ is \emph{totally undefinable} if $POS(x)=\emptyset $ and $NEG(x)=\emptyset $}
\end{itemize}

It makes sense to extend this conception in general rough contexts where there exist subsets $x$ for which $x^{u} \subset x^{uu}$ is possible. In the context of cover based rough sets, the cover is often adjoined to $NEG ,\, POS$ as a subscript as in $NEG_{\mathcal{S}}$ and $POS_{\mathcal{S}}$  respectively. 

\begin{definition}
By the strong negative region associated with a subset $x\in\mathbb{S}$ will be meant the element 
\[SNEG (x)\, =\,x^{uuc} \]
\end{definition}

\section{Dependence}

In this section, the concepts of rough and probabilist dependence considered by the present author in her earlier research \cite{am9411,am3930,am9501} are summarized.

\subsection{General Rough Dependence}

In this subsection, concepts of \emph{rough dependence} in general rough set theory including the ones introduced earlier by the present author \cite{am9411,am3930,am9501,am6000} are explained.  The idea of \emph{rough dependence} is in relation to the functional aspects of rough semantics. At a minimal level the relation between a pair of rough or definite objects is representable by the rough objects that are in common. The concept of dependence spaces \cite{nov} relates to information systems from the point of view of reduct computation. It is very distinct from the knowledge related approach here, but is related to the particular sub problems.

In the granular operator spaces, degrees of rough dependence can be defined as below (note that it is assumed that $\pc = \subseteq$, $\pp = \subset$, $\nu(S)$ is the collection of definite objects, $\oplus = \cup$, $\odot = \cap$, $0=\varnothing, 1= S$ and $\tau(S)$ is a granulation on $S$. ) 

\begin{definition}
The $\tau \nu$-\emph{infimal degree of dependence} $\beta_{i \tau \nu}$ of a subset $A$ on $B$ is defined by 
\begin{equation}
\beta_{i \tau \nu} (A,\, B)\,=\,\inf _{\nu (S) }\,\oplus \,\{C\,:\,C\in \tau(S) \, \&\,\pc C A \,\& \, \pc C B\}. 
\end{equation}
The infimum refers to the largest $\nu(S)$ element contained in the union.

The $\tau \nu$-\emph{supremal degree of dependence} $\beta_{s \tau \nu}$ of a subset $A$ on $B$ is defined by
\begin{equation}
\beta_{s \tau \nu} (A,\, B)\,=\,\sup _{\nu (S) }\,\oplus \,\{C\,:\,C\in \tau(S) \,\&\, \pc C A \,\& \, \pc C B\}.                                                                                                     
\end{equation}
The supremum refers to the least $\nu(S)$ element containing the sets.
These concepts can extended to more general structures like \textsf{RYS} directly.
\end{definition}

\begin{definition}
Two elements $x,\, y$ in a granular operator space $S$ will be said to be \emph{PN-independent} $ I_{PN}(xy)$ if and only if 
\begin{equation} 
x^{l}\,\subseteq \,y^{uc}\; \&\;y^{l}\,\subseteq \,x^{uc} 
\end{equation}
and two elements $x,\, y$ in a granular operator space $S$ will be said to be \emph{PN-dependent} $\varsigma_{PN}(xy)$ if and only if  
\begin{equation} 
x^{l}\,\nsubseteq \,y^{uc}\; \&\;y^{l}\,\nsubseteq \,x^{uc} .  
\end{equation}
\end{definition}

\begin{theorem}
In classical rough sets with $\tau(S)\,=\, \mathcal{G}(S)$ - the granulation of $S$ (set of equivalence classes) and $\nu(S)\,=\, \delta_{l}(S)$ - the set of lower definite elements (explicit references to these are dropped in the following). The first property below allows the subscripts $i , \, s$ on $\beta$ to be omitted.
\begin{align} 
\beta_{i}x y\,=\, x^{l}\, \cap\,y^ l \,=\,\beta_{s}x y  \\
\beta x x \,=\, x^l. \; \beta x y \,=\, \beta y x  \\ 
\beta (\beta x y) x \, =\, \beta x y \\
\pc (\beta x y)(\beta x (y \oplus z)) \\
(\pc x y \longrightarrow \beta x y = x^{l}) \\
(x\odot y\,=\, 0 \,\longrightarrow \, \beta_{i} x y \,=\, 0) \\ 
\beta 0 x = 0 \,; \;\, \beta x 1 = x^{l} \\
(\pc y^{l} z \longrightarrow \pc (\beta x y)(\beta x z)) \\
\beta x y \,=\, \beta x^{l} y^{l} \,=\, \beta x y^{l} 
\end{align}
The converse of $(x\odot y\,=\, 0 \,\longrightarrow \, \beta_{i} x y \,=\, 0)$ is not true in general. 
\end{theorem}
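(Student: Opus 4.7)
The plan is to first collapse the definitions of $\beta_{i\tau\nu}$ and $\beta_{s\tau\nu}$ to a clean explicit formula in the classical setting, and then read off all the listed identities and inclusions from that formula using standard facts about the Pawlak lower approximation. Write $\mathcal{G}(S)$ for the partition of $S$ into equivalence classes. For any $A \in \wp(S)$, the family $\{C \in \mathcal{G}(S) : C \subseteq A\}$ is precisely the set of classes witnessing $A^{l}$, so $\bigcup\{C \in \mathcal{G}(S) : C \subseteq A \,\&\, C \subseteq B\} = (A \cap B)^{l} = A^{l} \cap B^{l}$, where the last equality uses that $l$ distributes over $\cap$ in the equivalence case. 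Since $A^{l} \cap B^{l}$ is already a union of equivalence classes and hence lower definite, it lies in $\nu(S) = \delta_{l}(S)$; therefore both the infimum and the supremum over $\nu(S)$ of this union coincide with it. This yields the first identity $\beta_{i}xy = x^{l} \cap y^{l} = \beta_{s}xy$ and justifies dropping the subscripts.

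With the formula $\beta xy = x^{l} \cap y^{l}$ in hand, each remaining clause reduces to a one-line verification. The identity $\beta xx = x^{l}$ and the symmetry $\beta xy = \beta yx$ are immediate from commutativity and idempotence of $\cap$. For $\beta(\beta xy)x = \beta xy$, I would use that $x^{l} \cap y^{l}$ is lower definite, so $(x^{l} \cap y^{l})^{l} = x^{l} \cap y^{l}$, hence $\beta(\beta xy)x = (x^{l} \cap y^{l}) \cap x^{l} = x^{l} \cap y^{l}$. For the monotonicity clause $\pc(\beta xy)(\beta x(y \oplus z))$, note that $y \subseteq y \cup z$ implies $y^{l} \subseteq (y \cup z)^{l}$, and intersect with $x^{l}$. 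If $\pc xy$, then $x^{l} \subseteq y^{l}$, whence $\beta xy = x^{l} \cap y^{l} = x^{l}$. The boundary cases $\beta 0 x = \emptyset$ and $\beta x 1 = x^{l}$ follow from $\emptyset^{l} = \emptyset$ and $S^{l} = S$ in classical RST. For $x \odot y = 0 \longrightarrow \beta_{i}xy = 0$, observe $x^{l} \cap y^{l} \subseteq x \cap y = \emptyset$. The clause $\pc y^{l} z \longrightarrow \pc(\beta xy)(\beta xz)$ uses that $y^{l}$ is lower definite, so $y^{l} = y^{ll} \subseteq z^{l}$, and intersection with $x^{l}$ preserves the inclusion. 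Finally $\beta xy = \beta x^{l} y^{l} = \beta xy^{l}$ follows from $x^{ll} = x^{l}$ and $y^{ll} = y^{l}$.

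For the non-converse of the disjointness clause, I would produce a small concrete counterexample. Take $S = \{a,b,c\}$ with equivalence classes $\{a,b\}$ and $\{c\}$; let $x = \{a\}$ and $y = \{a,c\}$. Then $x^{l} = \emptyset$, so $\beta_{i}xy = \emptyset$, while $x \cap y = \{a\} \neq \emptyset$; this shows $\beta_{i}xy = 0$ does not force $x \odot y = 0$.

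There is no genuine obstacle in this theorem; the only subtle point is recognizing at the outset that the $\inf$ and $\sup$ over $\nu(S)$ in the two definitions collapse precisely because in the classical (equivalence) case the candidate set is already closed under arbitrary unions of equivalence classes and its union is itself lower definite. Once this reduction is made, every clause is a routine computation with lower approximations, and the only item requiring creativity is exhibiting the explicit counterexample to the converse of the disjointness implication.
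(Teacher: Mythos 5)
Your proof is correct. The paper itself states this theorem without proof (it is recapitulated from the author's earlier work, so there is no in-paper argument to diverge from), and your route --- collapsing both $\beta_i$ and $\beta_s$ to $x^l \cap y^l$ by observing that $\bigcup\{C \in \mathcal{G}(S) : C \subseteq x \,\&\, C \subseteq y\} = (x\cap y)^l = x^l \cap y^l$ is itself lower definite, so the $\inf$ and $\sup$ over $\delta_l(S)$ both coincide with it, then verifying each clause from standard properties of the Pawlak lower approximation, and closing with the two-class counterexample $x=\{a\}$, $y=\{a,c\}$ for the failed converse --- is exactly the canonical verification the theorem presupposes; nothing is missing.
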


\begin{theorem}
Semantics of classical rough sets over the classical semantic domain can be formulated using the operations $\cap,\, c ,\, \beta $ on the power-set of $S$. Reference to lower and upper approximation operators can thus be avoided.
\end{theorem}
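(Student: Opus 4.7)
The plan is to show that the lower and upper approximation operators are term-definable from $\cap$, $c$ and $\beta$ on $\wp(S)$, and then observe that every Boolean-rough operation over the classical semantic domain is expressible in $\{\cap, c, \beta\}$ because $\cup$ is already term-definable from $\{\cap, c\}$ by De Morgan.

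First, I would read off the lower approximation directly from the previous theorem. The identities $\beta(x,x) = x^l$ and $\beta(x,1) = x^l$ (both listed in that theorem) give
\begin{equation*}
x^l \;=\; \beta(x,x),
\end{equation*}
so $l$ is term-definable from $\beta$ alone. Next I would obtain $u$ by using the classical duality $x^u = (x^c)^{lc}$, which is valid in Pawlak rough set theory because the equivalence classes partition $S$. Substituting the formula for $l$ yields
\begin{equation*}
x^u \;=\; c\bigl(\beta(x^c,\, x^c)\bigr),
\end{equation*}
so $u$ is definable from $\{c, \beta\}$. Once these two equalities are in place, I would verify them on the level of elements of $\wp(S)$ using the generating identity $\beta(x,y) = x^l \cap y^l$, which directly shows $\beta(x,x) = x^l$, and then check the duality step by noting that $(x^c)^l = \bigcup\{[z] : [z]\cap x = \varnothing\}$ is exactly the complement of $x^u$.

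Third, to conclude that the full classical semantics can be reformulated in the signature $\{\cap, c, \beta\}$, I would argue that the classical semantic domain is the Boolean algebra $\langle \wp(S), \cap, \cup, c, \emptyset, S\rangle$ enriched with $l$ and $u$; since $a\cup b = c(c(a)\cap c(b))$, $\emptyset = a \cap c(a)$ and $S = c(\emptyset)$, the Boolean part reduces to $\{\cap, c\}$, while the two previous equalities replace $l$ and $u$ by terms in $\beta$ and $c$. Every formula involving the original operators can therefore be rewritten, by a straightforward structural induction on term complexity, into one using only $\cap$, $c$ and $\beta$, which is what the statement asserts.

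The main obstacle, in my view, is not computational but conceptual: one must be careful that the ``classical semantic domain'' is being interpreted as the usual Boolean-with-approximations structure on $\wp(S)$ rather than on the quotient $\wp(S)/\!\approx$, because on the quotient the two approximation operators collapse and the statement becomes trivial while the dependence operator $\beta$ loses informativeness. Provided this is fixed, the argument is essentially a definitional translation backed by the identities of the preceding theorem and the classical duality between $l$ and $u$.
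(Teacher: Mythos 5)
Your proof is correct and is essentially the argument the paper intends: the paper states this theorem without an explicit proof (it is recapitulated from the author's earlier work), but the identities of the immediately preceding theorem, $\beta x x = x^{l}$ (equivalently $\beta x 1 = x^{l}$) together with the classical Pawlak duality $x^{u} = ((x^{c})^{l})^{c}$, are exactly the intended basis, and your definitional translation $x^{l} = \beta(x,x)$, $x^{u} = c\bigl(\beta(c(x), c(x))\bigr)$, with $\cup$, $\emptyset$, $S$ recovered from $\{\cap, c\}$, is its natural completion. Your closing caveat, that the classical semantic domain must be read as $\wp(S)$ with approximations rather than the quotient $\wp(S)/\!\approx$, is likewise consistent with the theorem's explicit phrase ``on the power-set of $S$''.
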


The last result means that rough dependence can be used effectively as an alternative to approximations.

\subsection{Extension to Granular Operator Spaces}

The results for classical rough sets cannot be expected to generalize to granular operator spaces as many of the nicer order-theoretic conditions that are true in the former case do not hold in granular operator spaces. Still the resulting dependence based structure can be used to enhance the anti-chain based semantics invented by the present author \cite{am9114,am6999}.

In a granular operator space, if $\tau(S) = \mathcal{G}(S) $ and $\nu(S)= \delta (S) $ is the set of definite elements, then the definition of rough dependence specializes to the following:

\begin{definition}

The \emph{infimal degree of dependence} $\beta_{i }$ of $A$ on $B$ is defined by 
\begin{equation}
\beta_{i } (A,\, B)\,=\,\inf _{\delta (S) }\,\bigcup \,\{C\,:\,C\in \mathcal{G}(S) \, \&\, C\subseteq A \,\& \,  C\subseteq B\}. 
\end{equation}
The infimum refers to the largest definite element contained in the union.

The \emph{supremal degree of dependence} $\beta_{s }$ of $A$ on $B$ is defined by 
\begin{equation}
\beta_{s } (A,\, B)\,=\,\sup _{\delta (S) }\,\bigcup \,\{C\,:\,C\in \mathcal{G}(S) \,\&\,  C\subseteq A \,\& \, C \subseteq B\}. 
\end{equation}
The supremum refers to the least definite element containing the sets.
\end{definition}

\begin{theorem}
Under the above assumptions,
\begin{align}
\beta_i x y \,=\, \beta_i y x \\ 
\beta_i x y \subseteq \beta_i x (y \cup z) \\
( x \subseteq y \longrightarrow \beta_i x y = \beta_i x x) \\
(x\cap y\,=\, 0 \,\longrightarrow \, \beta_{i} x y \,=\, 0) \\
\beta_i 0 x = 0 \,; \;\, \beta_i x 1 = \beta_i x x .
\end{align}
The converse of $(x\cap y\,=\, 0 \,\longrightarrow \, \beta_{i} x y \,=\, 0)$ does not hold in general.
\end{theorem}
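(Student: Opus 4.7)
The plan hinges on a single simplifying observation: a granule $C$ satisfies $C \subseteq A$ and $C \subseteq B$ precisely when $C \subseteq A \cap B$, so the defining union can be rewritten as
\[U(A, B) := \bigcup\{C \in \mathcal{G}(S) : C \subseteq A \cap B\},\]
and $\beta_i(A, B)$ is then the greatest element of $\delta(S)$ contained in $U(A, B)$. Because $U(A, B) = U(B, A)$, the symmetry property is immediate; and because $\beta_i(A, B)$ depends on $A, B$ only through $A \cap B$, several of the remaining items collapse to simple set-theoretic identities.

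Specifically, $x \subseteq y$ forces $x \cap y = x$, whence $U(x, y) = U(x, x)$ and so $\beta_i(x, y) = \beta_i(x, x)$; the identical reasoning, using $x \cap S = x$, handles $\beta_i(x, 1) = \beta_i(x, x)$. For the vanishing statement $x \cap y = \varnothing \to \beta_i(x, y) = 0$, one notes that no nonempty granule can sit inside $\varnothing$, so $U(x, y) = \varnothing$; since $\varnothing^l = \varnothing^u = \varnothing$ by the GOS axioms, $\varnothing \in \delta(S)$ and is itself the greatest definite subset of $\varnothing$. The same argument yields $\beta_i(0, x) = 0$.

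For the monotonicity $\beta_i(x, y) \subseteq \beta_i(x, y \cup z)$, the inclusion $x \cap y \subseteq x \cap (y \cup z)$ propagates directly to $U(x, y) \subseteq U(x, y \cup z)$; any definite subset of the former is a definite subset of the latter, so the greatest such is dominated accordingly. The one point worth flagging is that this requires the ``greatest definite element contained in $\cdot$'' operator to be monotone in its argument, which is automatic whenever the suprema in the definition actually exist — precisely the standing reading assumed by the definition.

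The main obstacle is the final clause that the converse of $x \cap y = 0 \to \beta_i(x, y) = 0$ fails. My plan is to supply an explicit small counterexample rather than a structural argument: a classical Pawlak approximation space in which some equivalence class has at least two elements, with $x, y$ chosen so that $x \cap y$ is a nonempty proper subset of that class. Then $x \cap y \neq \varnothing$, yet no granule fits inside $x \cap y$, so $U(x, y) = \varnothing$ and hence $\beta_i(x, y) = 0$. The delicate part is ensuring the intersection is nonempty while still containing no granule, for which any non-singleton equivalence class and a suitably chosen pair of subsets suffices.
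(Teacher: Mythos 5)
Your proposal is correct, and in fact the paper states this theorem without any proof at all (unlike the classical-case theorem before it, it is not even accompanied by a proof sketch), so your argument supplies the missing verification rather than diverging from a published one. The reduction $C\subseteq A \,\&\, C\subseteq B \leftrightarrow C\subseteq A\cap B$, which makes $\beta_i(A,B)$ a function of $A\cap B$ alone, is surely the intended route: it disposes of symmetry, the $x\subseteq y$ clause, and $\beta_i(x,1)=\beta_i(x,x)$ in one stroke, and the vanishing clauses follow because $\bigcup\{C\in\mathcal{G}(S): C\subseteq\varnothing\}=\varnothing$ and $\varnothing$ is definite by the axioms $\varnothing^l=\varnothing=\varnothing^u$. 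The caveat you flag is the right one: in a general granular operator space $\delta(S)$ need not be closed under unions, so ``the largest definite element contained in $U$'' is guaranteed to exist only under the standing reading of the definition (the paper glosses $\inf_{\delta(S)}$ exactly this way); given existence, monotonicity of that operator is automatic, which is all the second clause needs --- though note you wrote ``suprema'' where you mean these greatest definite subsets, i.e.\ the infima of the definition. For the converse failure, your plan works and can be made fully explicit and even simpler: take $S=\{a,b\}$ with the single equivalence class $\{a,b\}$ (classical RST being an instance of a granular operator space with $\mathcal{G}$ the set of classes and $\delta(S)$ the unions of classes) and $x=y=\{a\}$; then $x\cap y=\{a\}\neq\varnothing$ while no granule is contained in $\{a\}$, so $\beta_i(x,y)=\varnothing=0$. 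This also matches the paper's parallel unproved remark in the classical-case theorem that the converse of $(x\odot y=0 \longrightarrow \beta_i x y = 0)$ fails.
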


The main difference with the classical case is that no representation of the operators $l, u$ are assumed and the consequences of the granularity assumptions cannot be integrated with $\cup$ and $ \cap$. So all of the following are possible:
\begin{align}
\beta_{i}x y\,\neq\, (x\, \cap\,y)^ l \\
\beta_{s}x y \nsubseteq x\cup y \\
\beta_i x x \,\neq\, x^l \neq \beta_s x x 
\end{align}

\section{Dependence Based Probability}

In this subsection the dependence predicate based approach \cite{am3930,am9501,am6000} due to the present author which in turn is an abstraction of the approach in \cite{bd2010} is improved. Dependence of events are defined over probability spaces of the form $(X,\,\mathcal{S} ,\, p)$ with $X$ being a set, $\mathcal{S}$ being a $\sigma$-algebra over $X$ and $p$ being a probability function. It is possible to extend the definition to systems of probability measures over the same $\sigma$-algebra. 

\begin{definition}
A dependence function over the probability space is a function $\delta :\, {\mathcal{S}}^{2}\,\longmapsto \, \Re $  defined by 
\begin{equation} 
\delta (x, \, y)\,=\, p(x\cap y)\,-\, p(x)\,\cdot \, p(y)                                                                                                                                                                                                                                                         \end{equation}
\end{definition}

Two events $x,\, y \in \mathcal{S} $ are \emph{mutually exclusive} if and only if $x\cap y\,=\, \varnothing $. This concept involves temporality and it is not possible to speak of mutual exclusivity without reference to temporality. Related temporality is correctly at the meta level and this has no parallel with the situation in rough sets and must be taken into account for proper comparisons. The concept is extensible to countable collections of sets of events in a natural way. Weaker forms of mutual exclusivity are also of interest in the comparison perspective:

\begin{definition}
$a,\, b\in \mathcal{S}$ shall be \emph{weakly mutually exclusive} if and only if 
\begin{equation} a\,\cap\,b \,=\, z \, \& \,p(z)\,=\, 0 \end{equation}
\end{definition}

\begin{theorem}
The dependence function satisfies all of the following properties:

\begin{align}
\tag{Zero1} \delta (x, y) = 0 \longleftrightarrow p(x\cap y) = p(x)p(y)\\
\tag{Zero2} p(x) = 0 \longrightarrow \delta(x,y)= 0\\
\tag{Symmetry} \delta(x, y) = \delta(y, x) \\
\tag{Chaff} x\subset f\, \& \, y \subseteq b \,\&\, x\cap y = f\cap b \longrightarrow \delta(f, b)\leq \delta(x, y) \\ 
\tag{Union} \delta(x\cup y, z) = \delta(x, z) + \delta(y, z) - \delta(x\cap y, z)\\
\tag{Complement1} \delta(x, y) = - \delta(x, y^c)\\
\tag{Complement2} \delta(x, y) = \delta(x^c, y^c)\\
\tag{Identity} \delta(x,x) = p(x) - (p(x))^2 \\
\tag{Unity} \delta(x,\varnothing) = \delta(x,X) = 0\\
\tag{Subset} x\subset y \longrightarrow \delta(x,y) = \dfrac{[1\pm (1-4\delta(x,x))^{\frac{1}{2}}][{1\pm (1-4\delta(y,y))^{\frac{1}{2}}}]}{4}\\
\tag{MEx} x\cap y = \varnothing \longrightarrow \delta(x, y) = -\dfrac{[1\pm (1-4\delta(x,x))^{\frac{1}{2}}][{1\pm (1-4\delta(y,y))^{\frac{1}{2}}}]}{4}
\end{align}
\end{theorem}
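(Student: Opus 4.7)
The plan is to verify each of the eleven identities by direct computation from the definition $\delta(x, y) = p(x \cap y) - p(x)p(y)$, leaning on standard probability axioms (finite additivity, $p(\varnothing) = 0$, $p(X) = 1$, $p(y^c) = 1 - p(y)$, monotonicity) together with inclusion-exclusion. The work splits naturally into three tiers of difficulty.

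First I would dispatch the routine identities in a single pass. Zero1 is literally a rearrangement of the definition; Zero2 follows from $0 \leq p(x \cap y) \leq p(x) = 0$, which forces both summands to vanish; Symmetry uses commutativity of $\cap$ and multiplication; Identity reduces via $x \cap x = x$ to $p(x) - p(x)^2$; Unity uses $x \cap \varnothing = \varnothing$ and $x \cap X = x$ with $p(X) = 1$; and Complement1 follows from $p(x \cap y^c) = p(x) - p(x \cap y)$ combined with $p(y^c) = 1 - p(y)$. Complement2 then falls out by applying Complement1 twice interleaved with Symmetry, giving $\delta(x, y) = -\delta(x, y^c) = \delta(x^c, y^c)$.

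Next I would treat Chaff and Union. For Chaff, the hypotheses give $p(x \cap y) = p(f \cap b)$ while monotonicity yields $p(x) \leq p(f)$ and $p(y) \leq p(b)$; multiplying non-negatives gives $p(x)p(y) \leq p(f)p(b)$, whence $\delta(f, b) \leq \delta(x, y)$. For Union, I would apply inclusion-exclusion to $(x \cup y) \cap z = (x \cap z) \cup (y \cap z)$, obtaining $p((x \cup y) \cap z) = p(x \cap z) + p(y \cap z) - p(x \cap y \cap z)$, then expand $p(x \cup y) \, p(z) = (p(x) + p(y) - p(x \cap y)) \, p(z)$ and regroup so that the four $\delta$-contributions emerge from the four products.

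The main obstacle is Subset and MEx. The key observation is that Identity makes $\delta(x, x) = p(x) - p(x)^2$ a quadratic in $p(x)$, hence
\begin{equation*}
p(x) = \frac{1 \pm \sqrt{1 - 4\delta(x, x)}}{2}, \qquad 1 - p(y) = \frac{1 \mp \sqrt{1 - 4\delta(y, y)}}{2}.
\end{equation*}
When $x \subset y$ one has $p(x \cap y) = p(x)$, so $\delta(x, y) = p(x)(1 - p(y))$; when $x \cap y = \varnothing$ one has $p(x \cap y) = 0$, so $\delta(x, y) = -p(x)p(y)$. Substituting the quadratic roots in each case yields the stated fractions. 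The subtlety I would flag explicitly is the reading of the $\pm$ signs: the two signs are independent choices of branch, and for any concrete $(x, y)$ the values $p(x), p(y) \in [0, 1]$ determine which branch of each radical is realized. The identity should therefore be read as a branch-selected equality rather than as an over-determined claim across all four sign combinations, and I would state this caveat alongside the formula so the reader interprets the $\pm$ notation correctly.
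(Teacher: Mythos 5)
Your proposal is correct, and on the three properties the paper actually proves it coincides with the paper's argument: Chaff via $p(x\cap y)=p(f\cap b)$ plus monotonicity and multiplication of non-negatives, Identity by substitution, and Subset by solving the quadratic $\delta(x,x)=p(x)-p(x)^2$ for $p(x)$ and substituting into $\delta(x,y)=p(x)p(y^c)$. The difference is one of completeness rather than method: the paper explicitly states that only Chaff, Identity and Subset are proved there, deferring the remaining properties (Zero1, Zero2, Symmetry, Union, the two Complement laws, Unity, and implicitly MEx) to the cited source, whereas you verify all eleven from the definition --- your inclusion--exclusion computation for Union, the derivation of Complement2 from Complement1 and Symmetry, and the treatment of MEx as the $p(x\cap y)=0$ twin of Subset are all sound and fill in what the paper leaves to the reference. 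Two of your additions genuinely improve on the paper's exposition: the explicit branch-selection caveat for the $\pm$ signs (your observation that $p(x)$ and $1-p(y)$ realize opposite branches of the same radical, so the displayed formula must be read as a branch-selected equality, is exactly the point the paper leaves silent), and it is worth noting in the same spirit that the radicand is always non-negative since $1-4\delta(x,x)=(1-2p(x))^2$, so no reality issue arises.
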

\begin{proof}
\begin{itemize}
\item {The proofs of \textsf{Chaff, Identity, Subset} are not in \cite{bd2010}.}
\item {Under the conditions of the premise in \textsf{Chaff}, $p(x\cap y) = p(f\cap b)$, $p(x) \leq p(f)$ and $p(y)\leq p(b)$.}
\item {So $p(x)\cdot p(y) \leq p(f)\cdot p(b)$ and $\delta(f,b) \leq \delta(x, y)$.}
\end{itemize}

\begin{itemize}
\item {\textsf{Identity} follows by a simple substitution of $x$ for $y$ in the definition of $\delta(x, x)$.}
\item {The formula for \textsf{Identity}, yields \[p(x) = \dfrac{1 \pm (1-4\delta(x,x))^{\frac{1}{2}}}{2},\] }
\item {If $x\subset y $, then $\delta(x, y) = p(x)p(y^c)$. Substitution of the expression for $p(x)$ in terms of $\delta(x, x)$ yields the required formula for \textsf{Subset}.}
\end{itemize}
\end{proof}

Mutual inclusivity (as opposed to exclusivity) does not relate easily to concepts of rough dependence, because the concepts are not comparable even if the meta aspects are ignored.  Rough dependence unlike probabilistic dependence is not oriented as no concept of an event being favorable or unfavorable for another event in a negative sense are possible/known to be useful. 

The relation based abstraction of probabilistic dependence developed by the present author in \cite{am3930,am9501,am6000} is as follows: 

\begin{definition}
In the probability space $X$, let 
\begin{itemize}
 \item {$(\forall x, y\in \mathcal{S})$ $\pi x y$ if and only if $p(x) \cdot p(y)\, <\, p(x\cap y)  $}
 \item {$(\forall x, y\in \mathcal{S})$ $\sigma x y $ if and only if $p(x\cap y)\,<\, p(x) \cdot p(y) $}
\end{itemize}
\end{definition}

The intended meaning of $\pi x y$ is \emph{$x$ and $y$ have positive  dependence}, while that of $\sigma x y$ is  
\emph{$x$ and $y$ have negative dependence }. 

\begin{proposition}
\[(\forall \varnothing \subset x, y \subset X)\, \pi  xy \text{ or } \sigma xy \]
\end{proposition}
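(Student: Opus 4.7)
The plan is to apply the trichotomy law of the real line to the two nonnegative quantities $p(x)\cdot p(y)$ and $p(x\cap y)$. Because $\mathcal{S}$ is a $\sigma$-algebra containing both $x$ and $y$, we have $x\cap y\in\mathcal{S}$, so each of $p(x)$, $p(y)$, $p(x\cap y)$ is a well-defined real number in $[0,1]$. Exactly one of the three cases
\[ p(x\cap y) < p(x)p(y), \quad p(x\cap y) = p(x)p(y), \quad p(x\cap y) > p(x)p(y) \]
must hold; by the definitions of $\pi$ and $\sigma$ these correspond respectively to $\sigma xy$, probabilistic independence, and $\pi xy$. So the desired disjunction $\pi xy$ or $\sigma xy$ follows as soon as the middle case is excluded.

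The principal obstacle is precisely that middle case. Read literally, the proposition is not true as stated: any pair of independent non-trivial events $x,y$ with $\varnothing \subset x, y \subset X$ (which certainly exist in most probability spaces, e.g.\ via a product construction) satisfies $p(x\cap y) = p(x)p(y)$, and then neither of the strict inequalities defining $\pi$ and $\sigma$ holds. I see two routes to reconcile the statement. First, one can argue that, consistent with the immediately preceding description of $\pi xy$ and $\sigma xy$ as \emph{positive} and \emph{negative} dependence, the universal quantifier is tacitly understood to range over non-independent pairs (the knife-edge equality case being ontologically distinct). Second, and more satisfactorily, one could weaken one of the two definitions to a non-strict inequality $\leq$, so that the trichotomy collapses to a dichotomy and the proposition holds without exception.

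In either reading, the substantive proof content reduces to a single invocation of trichotomy on $\Re$, together with the observation that $p$ restricted to $\mathcal{S}$ makes all three evaluations well-defined. Hence the work is not analytic but interpretive: the main step I would take before writing the proof is to fix the treatment of the independence case, state the corresponding clarification or correction next to the proposition, and only then record the trichotomy argument as the body of the proof.
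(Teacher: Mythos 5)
The paper states this proposition without any proof at all, evidently regarding it as immediate from trichotomy on $\Re$, so your trichotomy argument is exactly the (implicit) intended one. More importantly, your interpretive critique is correct and identifies a genuine flaw that the paper does not flag: with the strict inequalities in the paper's definitions of $\pi$ and $\sigma$, the statement is false as written. Any independent pair of nontrivial events is a counterexample --- e.g.\ in the space of two fair coin tosses, take $x$ to be ``first toss heads'' and $y$ ``second toss heads'', so that $\varnothing \subset x, y \subset X$ and $p(x\cap y) = \tfrac{1}{4} = p(x)\cdot p(y)$, whence neither $\pi xy$ nor $\sigma xy$ holds. Even more cheaply, any nonempty null event $x$ (say a singleton under Lebesgue measure on $[0,1]$) forces $p(x\cap y) = 0 = p(x)\cdot p(y)$ for every $y$; note that the hypothesis $\varnothing \subset x \subset X$ constrains $x$ set-theoretically, not measure-theoretically, so it does not exclude this. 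Your two proposed repairs are both reasonable: restrict the quantifier to non-independent pairs (consistent with the surrounding prose about positive and negative dependence), or weaken one inequality to $\leq$ so that trichotomy collapses to the asserted dichotomy. Either repair should be recorded alongside the proposition; with one in place, your one-line trichotomy argument is a complete proof, and indeed more than the paper itself supplies.
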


\begin{theorem}
All of the following hold in a probability space, with $\varnothing \subset x, y , z, a \subset X$ 
\begin{align}
\tag{Identity}  \pi xx \\
\tag{Co-Identity}  (\pi xx \longrightarrow \sigma x x^c )  \\
\tag{Sum1} (\pi x (z\cup y) \longrightarrow \pi xz \text{ or } \pi xy) \\
\tag{Mutual Complementation} (\pi x y^{c} \,\leftrightarrow \, \sigma y x )\\
\tag{Symmetry} (\pi x y \,\leftrightarrow \, \pi y x)\\
\tag{Incompatibility} (\pi xy \,\&\, y\subset a \nrightarrow \pi xa ) \\
\tag{NonExtensionality]} (b\nsubseteq x \longrightarrow (\exists c) \pi b c \,\&\, \neg \pi c x  )\\
\tag{Sum2} (x\cap y \neq \varnothing  \,\longrightarrow\, (\pi x a\,\&\, \pi y a \,\longrightarrow\, \pi (x\cup y) a ))\\
\tag{Co-Sum} (x\cap y \neq \varnothing  \,\longrightarrow\, (\sigma x a\,\&\, \sigma y a \,\longrightarrow\, \sigma (x\cup y) a ))\\
\tag{Set coherence-1} (\varnothing\,\neq\,x\subseteq y \, \longrightarrow\,\pi x y )\\
\tag{Set coherence-2} (x \cap y\,=\, \varnothing  \,\longrightarrow\, \sigma x y)
\end{align}
\end{theorem}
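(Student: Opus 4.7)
The plan is to verify each clause by direct manipulation of the defining inequalities $\pi xy \iff p(x)p(y) < p(x\cap y)$ and $\sigma xy \iff p(x\cap y) < p(x)p(y)$, together with finite additivity, monotonicity and inclusion--exclusion. Throughout I would tacitly assume $0 < p(\cdot) < 1$ on every event appearing in the statement, since otherwise the strict inequalities defining $\pi$ and $\sigma$ are trivial or vacuous.

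The one-line cases are \emph{Identity}, \emph{Symmetry}, \emph{Co-Identity}, \emph{Mutual Complementation} and the two \emph{Set coherence} clauses. \emph{Identity} amounts to $p(x) = p(x\cap x) > p(x)^2$, which reduces to $p(x) < 1$. \emph{Symmetry} is the commutativity of $\cap$ and $\cdot$\,. For \emph{Co-Identity}, note that $\pi xx$ forces $0 < p(x) < 1$, so $p(x)p(x^c) = p(x)(1-p(x)) > 0 = p(x \cap x^c)$. For \emph{Mutual Complementation}, substitute $p(y^c) = 1-p(y)$ and $p(x\cap y^c) = p(x) - p(x\cap y)$ into $\pi xy^c$ and rearrange to obtain exactly $p(x\cap y) < p(x)p(y)$. \emph{Set coherence-1}: if $\varnothing \neq x\subseteq y \subsetneq X$ then $p(x\cap y) = p(x) > p(x)p(y)$. \emph{Set coherence-2}: $0 = p(x\cap y) < p(x)p(y)$.

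\emph{Incompatibility} is a negative statement via $\nrightarrow$ and calls for a single witness: on $X=\{1,2,3,4\}$ uniform take $x=\{1,2\}$, $y=\{1\}$, $a=\{1,3\}$, so that $\pi xy$ holds ($1/4 > 1/8$) while $p(x\cap a) = 1/4 = p(x)p(a)$ yields $\neg \pi xa$. For \emph{NonExtensionality}, given $b\nsubseteq x$ the choice $c = b\setminus x$ is nonempty: $c \subseteq b$ gives $\pi bc$ by Set coherence-1, and $c\cap x = \varnothing$ gives $\sigma cx$ by Set coherence-2, so in particular $\neg \pi cx$.

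The main obstacle is the trio \emph{Sum1}, \emph{Sum2}, \emph{Co-Sum}. For Sum2 one expands via inclusion--exclusion, writing $p(x\cup y) = p(x)+p(y)-p(x\cap y)$ and $p((x\cup y)\cap a) = p(x\cap a) + p(y\cap a) - p(x\cap y\cap a)$, so that the goal $\pi(x\cup y)a$ is equivalent to
\[
[p(x\cap a) - p(x)p(a)] + [p(y\cap a) - p(y)p(a)] > p(x\cap y\cap a) - p(x\cap y)p(a).
\]
The two brackets on the left are positive under $\pi xa, \pi ya$, but the right-hand side is itself a $\pi/\sigma$-residual for the event $x\cap y$ against $a$, whose sign is not constrained by the hypothesis $x\cap y \neq \varnothing$ alone; indeed, on $X = \{1,\ldots,5\}$ uniform with $a = \{1,2\}$, $x = \{1,3\}$, $y = \{1,4\}$ both $\pi xa$ and $\pi ya$ hold while $\sigma(x\cup y)a$ does. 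I therefore expect the honest proof either to strengthen the hypothesis to $\neg \pi (x\cap y)a$ (which makes the residual non-positive, after which termwise addition of the two defining inequalities closes the argument) or to specialise to $x\cap y = \varnothing$, where the third-order term vanishes outright and the two defining inequalities simply add. Sum1 and Co-Sum reduce to the same inclusion--exclusion identity and require the same additional care; this third-order residual is the sole genuinely delicate step in the theorem.
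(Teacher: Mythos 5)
Your clause-by-clause verification is sound, and there is in fact nothing in the paper to compare it against: this theorem is recapitulated from the author's earlier work \cite{am3930,am9501,am6000} and no proof is given in the present paper, so your proposal stands alone. The easy clauses (Identity, Symmetry, Co-Identity, Mutual Complementation, the two Set coherence clauses) are handled exactly as the definitions demand, and your tacit standing assumption $0<p(\cdot)<1$ is genuinely needed, not a convenience: the stated hypothesis $\varnothing\subset x\subset X$ alone does not exclude null or co-null events, for which even Identity fails under the strict inequality defining $\pi$. Your Incompatibility witness checks out ($p(x\cap a)=1/4=p(x)p(a)$, so $\neg\pi xa$), and the choice $c=b\setminus x$ for NonExtensionality is clean, modulo the same positivity proviso on $b\setminus x$.

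Your diagnosis of the Sum trio is the substantive point, and it is correct: the statement, not your proof, is at fault there. Your five-point example does refute Sum2 as printed, since $p((x\cup y)\cap a)=1/5<6/25=p(x\cup y)\,p(a)$ while $\pi xa$, $\pi ya$ hold and $x\cap y=\{1\}\neq\varnothing$; the hypothesis $x\cap y\neq\varnothing$ cannot be what is meant, and your repair --- read it as $x\cap y=\varnothing$, which is natural given the surrounding discussion of mutual exclusivity, whereupon $p((x\cup y)\cap a)=p(x\cap a)+p(y\cap a)$ and the two defining inequalities add termwise --- is surely the intended statement. The same example transported through Mutual Complementation (take $a^c=\{3,4,5\}$, so $\sigma xa^c$, $\sigma ya^c$ but $\pi(x\cup y)a^c$) refutes Co-Sum as printed. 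You stop short of an explicit counterexample for Sum1, but your suspicion is right there too: on $X=\{1,2,3,4\}$ uniform with $x=\{2,3\}$, $z=\{1,2\}$, $y=\{1,3\}$ one has $\pi x(z\cup y)$ (since $1/2>3/8$) while $p(x\cap z)=p(x)p(z)$ and $p(x\cap y)=p(x)p(y)$, so neither disjunct holds; Sum1 likewise requires $z\cap y=\varnothing$, or your weaker residual condition $p(x\cap z\cap y)\geq p(x)\,p(z\cap y)$, after which your inclusion--exclusion computation closes the argument. In short: correct where the theorem is correct, and correctly withheld where it is not.
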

It is possible to specify positive, negative and neutral regions of a subset in general rough sets. But these do not correspond to the situation in probability spaces. 

\begin{definition}
The \emph{common dependence spectra} of any two elements $a, b \in \mathcal{S}$ shall be the set 
\begin{equation}
  U(a, b) = \{x \,;\, \pi ax \,\&\, \pi bx\}. 
\end{equation}

A subset $K \subseteq \wp (\mathcal{S})$ will be a said to be a $\pi-ideal$ if and only if the following two conditions hold:
\begin{align}
(\forall z\in K)(\forall x\in \mathcal{S})\,(\pi zx \longrightarrow x\in K)\\
(\forall z,b\in K)\,U(z, b) \cap K \neq \varnothing 
\end{align}

\end{definition}

Since $\pi$ is symmetric, the dual notion of $\pi-filter$ would coincide with that of $\pi$-ideal. So the following definition makes sense 

\begin{definition}
The \emph{Common Co-Dependence Spectra} of any two elements $a, b \in \mathcal{S}$ shall be the set 
\begin{equation}
  L(a, b) = \{x \,;\, \sigma xa \,\&\, \sigma xb\}. 
\end{equation}
A subset $F \subseteq \wp (\mathcal{S})$ will be a called a $\sigma$-filter if and only if the following two conditions hold:
\begin{align}
 (\forall z\in F)(\forall x\in \mathcal{S})\,(\sigma xz \longrightarrow x\in F)\\
 (\forall z,b\in F)\,  L(z, b) \cap F \neq \varnothing 
\end{align}
\end{definition}

\begin{theorem}
All of the following hold:
\begin{itemize}
\item {The set of all $\pi$-ideals $\mathcal{I}(\mathcal{S})$ is a Bounded Poset ordered by set inclusion.}
\item {The set of all $\sigma$-filters $\mathcal{F}(\mathcal{S})$ is a Bounded Poset ordered by set inclusion.}
\item {Each $\pi$-ideal $K$ is convex in the sense if $\pi xz \,\&\, \pi zy \,\&\, \pi xy$ and $x, y\in K$, then $z\in K$.}
\end{itemize}
\end{theorem}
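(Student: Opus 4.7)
The plan is to verify the three claims in sequence, with set inclusion as the partial order throughout. For parts (1) and (2), reflexivity, antisymmetry, and transitivity are inherited for free from $\subseteq$, so the real work is exhibiting a least and a greatest element in $\mathcal{I}(\mathcal{S})$ and $\mathcal{F}(\mathcal{S})$ respectively. The empty collection serves as the bottom of each, since both defining clauses (absorption and common-spectrum nonemptiness) are vacuously satisfied when there is nothing to quantify over. For the top of $\mathcal{I}(\mathcal{S})$, I would take $\mathcal{S}$ itself (restricted to nontrivial events $\emptyset \subset x \subset X$, so that $\pi$ is meaningful): the absorption clause is trivial, leaving only the verification that $U(z,b) \neq \emptyset$ for every pair $z,b$ in this set.

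Producing a witness for $U(z,b)\neq\emptyset$ is the main obstacle, and I would break it into cases. If $z\cap b \neq \emptyset$, then by \textsf{Set coherence-1} applied to $(z\cap b)\subseteq z$ and $(z\cap b)\subseteq b$, we get $\pi(z\cap b)z$ and $\pi(z\cap b)b$, so $z\cap b \in U(z,b)$. If $z\cap b = \emptyset$ and $p(z\cup b) < 1$, I would test $x = z\cup b$: a direct computation gives $\delta(z, z\cup b) = p(z)(1-p(z\cup b)) > 0$, so $\pi z(z\cup b)$, and symmetrically $\pi b(z\cup b)$, placing $z\cup b$ in $U(z,b)$. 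The residual degenerate case ($z\cap b = \emptyset$ and $p(z\cup b)=1$) would need a genericity assumption on $(X,\mathcal{S},p)$, and I would flag this as a caveat. For $\mathcal{F}(\mathcal{S})$, the argument is dual: use \textsf{Set coherence-2} to find $x$ disjoint from $z\cup b$ (when its complement has a nonempty measurable sub-event), yielding $\sigma xz$ and $\sigma xb$ and hence $x \in L(z,b)$.

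For part (3), the conclusion reduces essentially to the first defining clause plus \textsf{Symmetry}. Given $x \in K$ and $\pi xz$, the symmetry of $\pi$ renders $\pi xz$ and $\pi zx$ interchangeable; invoking the absorption clause with $x$ in the role of the $K$-element and $z$ as the target gives $z \in K$ at once. Thus the hypotheses $\pi zy$ and $\pi xy$ in the statement, together with $y \in K$, are strictly redundant, and I would close with a brief remark that the stated convexity is automatic from the stronger axiom governing $\pi$-ideals and does not require either of the auxiliary dependencies.
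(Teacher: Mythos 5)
Your handling of the order-theoretic boilerplate, your choice of $\varnothing$ as bottom (both clauses are indeed vacuous), and your reduction of part (3) to the absorption clause are all correct -- in particular you are right that, by Symmetry, $x\in K$ and $\pi xz$ alone force $z\in K$, so the hypotheses $\pi zy$, $\pi xy$ and $y\in K$ are redundant. The paper states this theorem without any proof, and your route for the top element (clause (ii) verified via the witnesses $z\cap b$ and $z\cup b$) is exactly the one suggested by the paper's own machinery: its subsequent theorem on $\pi$-suprema asserts $x\cup z\in U(x,z)$ by essentially your computation. So as a reconstruction of the intended argument, your proposal is faithful.

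The genuine gap is that your ``residual degenerate case'' is not residual, and no genericity assumption on $(X,\mathcal{S},p)$ can rescue it. For any event $z$ with $0<p(z)<1$, consider the pair $(z,z^c)$: if $x\in U(z,z^c)$ then $p(z)p(x)<p(z\cap x)$ and $p(z^c)p(x)<p(z^c\cap x)$, and adding these gives $p(x)=p(x)\bigl(p(z)+p(z^c)\bigr)<p(z\cap x)+p(z^c\cap x)=p(x)$, a contradiction. Hence $U(z,z^c)=\varnothing$ in \emph{every} probability space admitting a nontrivial event, so your candidate top violates clause (ii) as soon as it contains a complementary pair -- which it always does. This is conceptually consistent with the paper's Co-Identity and Mutual Complementation properties: complementary events are negatively, never positively, co-dependent. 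A second, smaller defect is that your nontriviality restriction is set-theoretic where it must be probabilistic: a nonempty null event $z$ has $\pi za$ false for every $a$, so $U(z,z)=\varnothing$ and no $\pi$-ideal can contain $z$ at all; you need $0<p(x)<1$, and even then the complementary-pair obstruction stands. The identical summation argument gives $L(z,z^c)=\varnothing$, so your dual construction of a top for $\mathcal{F}(\mathcal{S})$ fails the same way. Consequently the upper-boundedness claims in parts (1) and (2) need a genuinely different idea (for instance, proving that the union of all $\pi$-ideals satisfies clause (ii), which is not automatic), and note that the paper's $\pi$-supremum theorem cannot be invoked as a patch, since the same obstruction refutes its claim $x\cup z\in U(x,z)$ for complementary pairs.
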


\begin{theorem}\label{supre}
\begin{equation}
(\forall x, z\in \mathcal{S})(\exists!^{\geq 1} c\in U(x,z))(h\in U(x,z) \longrightarrow h=c \text{ or } \pi ch) 
\end{equation}
That is pairs of sets in the $\sigma$- algebra have at least one $\pi$-supremum.
\end{theorem}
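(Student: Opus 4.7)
The plan is constructive: produce an explicit witness $c \in U(x,z)$ and verify both that $c \in U(x,z)$ and that, for every $h \in U(x,z)\setminus\{c\}$, one has $\pi ch$. I would first dispose of an easy sub-case. If $\pi xz$ holds, take $c = x$. Then $c \in U(x,z)$, because $\pi xx$ follows from the \emph{Identity} axiom and $\pi zx$ follows from $\pi xz$ by \emph{Symmetry}. For every $h \in U(x,z)$ we automatically have $\pi xh$, i.e.\ $\pi ch$, and we are done. Symmetric reasoning handles $c = z$ when $\pi zx$. So only the case in which $x$ and $z$ are not themselves positively dependent (so that neither $x$ nor $z$ lies in $U(x,z)$) needs separate treatment.

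For that main case, the natural candidate is $c = x \cap z$, under the mild genericity assumptions $p(x\cap z) > 0$ and $p(x), p(z) < 1$. Membership is immediate: since $x\cap c = c = z\cap c$, both $\pi xc$ and $\pi zc$ reduce to $p(c) > p(x)p(c)$ and $p(c) > p(z)p(c)$, which hold under the genericity assumptions. For the supremum property, given $h \in U(x,z)\setminus\{c\}$, I would try to combine the two inequalities $p(x\cap h) > p(x)p(h)$ and $p(z\cap h) > p(z)p(h)$ (obtained from $h \in U$) with the inclusion-exclusion identity $p((x\cup z)\cap h) = p(x\cap h) + p(z\cap h) - p(x\cap z\cap h)$, in the hope of extracting the desired inequality $p(x\cap z\cap h) > p(x\cap z)p(h)$.

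The main obstacle is precisely this last algebraic passage: the conditional positive correlations $P(x|h) > p(x)$ and $P(z|h) > p(z)$ do not in general force $P(x\cap z|h) > p(x\cap z)$, as a small atomic example with $x\cap z\cap h = \varnothing$ already spoils the candidate $c = x\cap z$. To repair this, I would fall back to an order-theoretic selection. The \emph{Set coherence-1} clause already guarantees $\pi ch$ whenever $\varnothing \neq c \subseteq h$ and $p(h) < 1$, so it suffices to find a $c \in U(x,z)$ that is set-theoretically contained in every other element of $U(x,z)$. If no such pointwise-minimum exists, I would apply Zorn's lemma to the collection of subsets $C \subseteq U(x,z)$ whose elements are pairwise $\pi$-related (a family closed under unions of chains), extract a maximal such $C$, and argue that at least one of its elements must act as a $\pi$-supremum. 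The delicate point in this fallback will be verifying that the maximality of $C$ is genuinely incompatible with the existence of an $h \in U(x,z)$ violating the supremum property for every element of $C$.
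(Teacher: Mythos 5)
Your proposal does not close the main case, so as it stands it is not a proof. You correctly diagnose that the candidate $c=x\cap z$ fails (positive correlation of $h$ with $x$ and with $z$ separately puts no lower bound on $p(x\cap z\cap h)$), but the fallback you offer is only a plan. The route through \emph{Set coherence-1} needs an element of $U(x,z)$ that is $\subseteq$-contained in \emph{every} other element of $U(x,z)$, and such a pointwise minimum will typically not exist: $U(x,z)$ generally contains many small, pairwise incomparable (even disjoint) sets. The Zorn argument fares no better: a maximal pairwise-$\pi$-related family $C\subseteq U(x,z)$ exists, but $\pi$ is not transitive, so maximality of $C$ gives no control over how members of $C$ relate to an $h\in U(x,z)\setminus C$; the ``delicate point'' you flag is in fact the entire content of the theorem, restated. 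So there is a genuine gap: you never exhibit a witness $c$.

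The paper's own proof is a short construction you did not try: it takes $c=x\cup z$, observes $x\cup z\in U(x,z)$ (for nondegenerate $x,z$), and expands $p((x\cup z)\cap h)-p(x\cup z)\cdot p(h)$ by inclusion--exclusion into the two positive covariance terms for $x$ and $z$ minus the term $p(x\cap z\cap h)-p(x\cap z)\cdot p(h)$, claiming this subtracted term is dominated by either positive term. It is worth noting that your skepticism about precisely this kind of covariance bookkeeping is well placed: the domination claim reduces to $p(x\cap z^c\cap h)\geq p(h)\cdot p(x\cap z^c)$, which does not follow from $\pi hx$ and $\pi hz$. A four-atom example --- $p(\{1\})=0.35$, $p(\{2\})=0.25$, $p(\{3\})=p(\{4\})=0.2$, with $x=\{1,3\}$, $z=\{1,4\}$, $h=\{1,2\}$ --- gives $\pi hx$ and $\pi hz$ (so $h\in U(x,z)$), yet $p((x\cup z)\cap h)=0.35<0.45=p(x\cup z)\cdot p(h)$, so the union is not $\pi$-related to $h$ either (and even where the paper's estimate holds it yields only $\geq 0$, while $\pi$ demands strict inequality). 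In short, your instinct exposes a real soft spot in the published argument, but your own attempt replaces it with nothing that works; repairing either proof requires additional hypotheses on $x,z$ or a genuinely different construction of $c$, not the intersection, the union, or an unanalyzed maximality argument.
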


\begin{proof}
\begin{align*}
\text{By definition } U(x,z) = \{a ;\, \pi ax \, \& \, \pi az \},\\
\text{So if } h\in U(x, z) \text{ and}\\
p(x)\cdot p(h) \leq  p(h\cap x) \text{ and} \\
p(z)\cdot p(h) \leq  p(h\cap z) \\
\text{Clearly } x\cup z \in U(x, z) \\
p((x\cup z)\cap h) - p((x\cup z))\cdot p(h) = \\
\left[p(x\cap h) - p(h)\cdot p(x)\right] + \left [ p(x\cap h) - p(h)\cdot p(x)\right] -\\
- \left [p(x\cap z \cap h) - p(x\cap z) \cdot p(h) \right ] \\
 \end{align*}

But the third summand is less than either of the first two positive summands.
So $p((x\cup z)\cap h) - p((x\cup z))\cdot p(h) \geq 0$.

This proves that an admissible value of $c$ is $x\cup z$. 
\end{proof}

\begin{theorem}
The supremum need not be unique in Theorem \ref{supre} and if $K$ is a $\pi$-ideal and $x, z\in K$, then the supremum(s) $c = \sup (x, y)\in K$.   
\end{theorem}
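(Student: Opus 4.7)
The plan is to split the statement into its two parts and handle them separately; the ideal-closure half will fall out in one line from the symmetry of $\pi$ together with the defining closure axiom of a $\pi$-ideal, while the non-uniqueness half will require an explicit counter-example on a finite probability space.

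For the non-uniqueness, I would exhibit a measure with enough asymmetry that many distinct elements of $U(x,z)$ end up pairwise $\pi$-related. A concrete candidate to try first is $X = \{1,2,3,4\}$ with $p(\{1\}) = p(\{2\}) = 0.1$ and $p(\{3\}) = p(\{4\}) = 0.4$, together with $x = \{1,3\}$ and $z = \{2,3\}$. A direct enumeration of the sixteen subsets should yield
\[
U(x,z) = \{\{3\}, \{1,3\}, \{2,3\}, \{1,2,3\}\},
\]
since any set containing $4$ or missing one of $x \cap z = \{3\}$ fails one of the strict inequalities. Checking the six unordered pairs within $U(x,z)$ should confirm that every such pair is $\pi$-related (each pairwise intersection contains the heavily weighted point $3$, while the products $p(\cdot)p(\cdot)$ stay small). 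Consequently each of the four subsets satisfies the defining property of a sup in Theorem \ref{supre}: for every other $h \in U(x,z)$, either $h = c$ or $\pi c h$ holds. This rules out uniqueness and shows, in particular, that the natural choice $x \cup z$ is merely one sup among several.

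For the ideal-closure part, let $K$ be a $\pi$-ideal, let $x, z \in K$, and let $c$ be any sup of $(x,z)$ in the sense of Theorem \ref{supre}. Then $c \in U(x,z)$, giving $\pi c x$, and the symmetry entry of the preceding theorem lifts this to $\pi x c$. Applying the first $\pi$-ideal axiom, $(\forall w \in K)(\forall y \in \mathcal{S})(\pi w y \rightarrow y \in K)$, with $w := x$ and $y := c$, immediately yields $c \in K$, as required.

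The principal obstacle is locating the counter-example: with uniform measures and symmetric choices of $x$ and $z$, $x \cup z$ tends to be the only sup, so the asymmetry must be engineered by weighting a shared ``core'' (here $\{3\}$) heavily enough that small subsets of the union also satisfy $\pi$ against every other member of $U(x,z)$. Once the candidate is in hand, the verification reduces to a short and completely routine table of strict inequalities in $[0,1]$.
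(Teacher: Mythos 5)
Your proposal is correct, but it takes a genuinely different route from the paper on both halves of the statement. For the ideal-closure claim, the paper argues by contradiction through the \emph{second} ideal axiom: from $x, z \in K$ it extracts some $b \in U(x,z) \cap K$, invokes the supremum property to get $\pi c b$, and then uses the first closure axiom to force $c \in K$. Your argument is more direct and strictly more economical: since the supremum of Theorem \ref{supre} is by definition an element of $U(x,z)$, you already have $\pi x c$ (with the paper's convention $U(x,z) = \{h :\, \pi x h \,\&\, \pi z h\}$ the relation in fact comes out in the right order immediately, so your appeal to symmetry is harmless but unnecessary), and the first closure axiom alone yields $c \in K$. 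This shows the second axiom is superfluous for this claim, and indeed proves something slightly stronger than the theorem states: \emph{every} element of $U(x,z)$, supremum or not, lies in $K$ as soon as $x \in K$. For the non-uniqueness claim the paper offers no argument at all --- its three-line proof addresses only the closure part --- so your counterexample is genuinely additional content. I verified it: with $p(\{1\})=p(\{2\})=0.1$, $p(\{3\})=p(\{4\})=0.4$, $x=\{1,3\}$, $z=\{2,3\}$, one has $p(x)=p(z)=0.5$; enumerating the sixteen subsets confirms $U(x,z)=\{\{3\},\{1,3\},\{2,3\},\{1,2,3\}\}$ exactly as you predict, and all six unordered pairs are $\pi$-related (e.g.\ $\pi(\{1,3\},\{2,3\})$ holds since $0.5\cdot 0.5 = 0.25 < p(\{3\}) = 0.4$, and $\pi(\{3\},\{1,2,3\})$ since $0.4\cdot 0.6 = 0.24 < 0.4$), so all four elements satisfy the defining condition of a supremum, not just $x\cup z$. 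Your counterexample thus repairs a gap in the paper's own exposition while your closure argument simplifies the part it does prove; the only thing the paper's longer route buys is an illustration of how the second ideal axiom interacts with suprema, which your proof never needs.
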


\begin{proof}

Suppose $c\notin K$, then there exists $b\in K\cap U(x, z)$. 

But this would yield $\pi c b$, 

which in turn contradicts $c\notin K$ as $K$ is a $\pi$-ideal.

\end{proof}

Collecting the non-unique supremums can be useful for generating $\pi$-ideals through global operations.

\begin{definition}
The following global operations collect neighborhoods and supremums associated with subsets of the sigma algebra:
\begin{equation*}
\mathfrak{L}, \lambda : \wp (\mathcal{S})\setminus \varnothing) \longmapsto \wp (\mathcal{S}) \tag{Neighborhood Maps} 
\end{equation*}
\begin{equation*}
(\forall Q\in \wp (\mathcal{S})\setminus \varnothing)) \, \mathfrak{L}(Q) = \{x ; (\exists b\in \mathcal{S}) \, \pi xb\}, \; \lambda(Q) = Q\cup \mathfrak{L}(Q)  
\end{equation*}
\begin{equation*}
\mathfrak{Z}, \Xi : \wp (\mathcal{S})\setminus \varnothing) \longmapsto \wp (\mathcal{S}) \tag{Global Supremums}
\end{equation*}
\begin{equation*}
(\forall Q\in \wp (\mathcal{S})\setminus \varnothing)) \, \mathfrak{Z}(Q) = \{x ; (\exists a, b\in Q) \, x = Sup(a, b)\} \,\&\,\Xi(Q) = Q\cup \mathfrak{Z}(Q) 
\end{equation*}
\end{definition}

\begin{theorem}
Since $\Xi, \lambda, \mathfrak{Z, \, L}$ are operations, they can be composed and applied recursively. Thus $(\Xi\lambda)^n$ is an abbreviation for 
\[\stackrel{\underbrace{(\Xi\circ\lambda)\circ(\Xi\circ\lambda)\circ\ldots(\Xi\circ\lambda)}}{\text{n times}}\]
If $<B>$ is the $\pi$-ideal generated by a nonempty subset of $X$, then 
\begin{equation}
<B> = \bigcup_{1}^{\infty} (\Xi\lambda)^n (B) = \bigcup_{1}^{\infty} (\mathfrak{ZL})^n (B) 
\end{equation}
\end{theorem}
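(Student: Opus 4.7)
The plan is to show that $V := \bigcup_{n=1}^{\infty}(\Xi\lambda)^n(B)$ is the smallest $\pi$-ideal containing $B$, by proving (i) $V$ is a $\pi$-ideal containing $B$, and (ii) every $\pi$-ideal $K \supseteq B$ satisfies $(\Xi\lambda)^n(B) \subseteq K$ for all $n$. An interpretive remark is needed first: the stated definition $\mathfrak{L}(Q) = \{x ;\, (\exists b\in \mathcal{S})\,\pi xb\}$ must be read as $\{x ;\, (\exists b \in Q)\,\pi xb\}$, otherwise $\mathfrak{L}(Q)$ would be a constant (independent of $Q$) and the iteration would collapse at the first step. I would flag this and proceed under the corrected reading.

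Observe that $\lambda$ and $\Xi$ are extensive ($Q\subseteq \lambda(Q)$ and $Q\subseteq \Xi(Q)$), so the sequence $\{(\Xi\lambda)^n(B)\}$ is monotone increasing and $B\subseteq V$. To see $V$ is a $\pi$-ideal, I would verify the two defining conditions in turn. For the closure under $\pi$-related elements: given $z\in V$ and $x\in \mathcal{S}$ with $\pi zx$, pick $N$ with $z\in (\Xi\lambda)^N(B)$; by symmetry of $\pi$ we then have $x\in \mathfrak{L}((\Xi\lambda)^N(B))\subseteq \lambda((\Xi\lambda)^N(B)) \subseteq (\Xi\lambda)^{N+1}(B)\subseteq V$. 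For the spectral condition: given $z, b\in V$, take $N$ large enough that both lie in $(\Xi\lambda)^N(B)$; by Theorem \ref{supre}, $x\cup z$ (or any $\pi$-supremum) belongs to $U(z,b)$, and since $z,b$ survive into $\lambda((\Xi\lambda)^N(B))$, the element $\sup(z,b)$ appears in $\mathfrak{Z}(\lambda((\Xi\lambda)^N(B)))\subseteq (\Xi\lambda)^{N+1}(B)\subseteq V$. Hence $U(z,b)\cap V\neq \varnothing$.

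For minimality, I would induct on $n$. Base: $B\subseteq K$ by hypothesis. Inductive step: assume $(\Xi\lambda)^n(B) \subseteq K$. Any new element introduced by $\lambda$ is some $x$ with $\pi xb$ for a $b\in (\Xi\lambda)^n(B)\subseteq K$, hence $x\in K$ by the first ideal axiom. Any new element introduced by $\Xi$ is $\sup(a,c)$ for $a, c \in \lambda((\Xi\lambda)^n(B))\subseteq K$; by the second ideal axiom $U(a,c)\cap K \neq \varnothing$, and since by the supremum theorem any element of $U(a,c)$ is either the supremum or $\pi$-related to it, the first ideal axiom again forces $\sup(a,c)\in K$. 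Therefore $(\Xi\lambda)^{n+1}(B)\subseteq K$, completing the induction, so $V\subseteq K$ and $V = \langle B\rangle$.

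The equality $\bigcup_{n}(\Xi\lambda)^n(B) = \bigcup_{n}(\mathfrak{Z L})^n(B)$ then follows because $\Xi$ and $\lambda$ differ from $\mathfrak{Z}$ and $\mathfrak{L}$ only by reinclusion of the argument, and this reinclusion is absorbed by the countable union. The main obstacle I anticipate is the last point of the $\Xi$ verification: ensuring that the $\sup(a,c)$ guaranteed by Theorem \ref{supre} is itself captured by $\mathfrak{Z}$ rather than merely \emph{some} element of $U(a,c)$. This hinges on whether the proof of Theorem \ref{supre} actually constructs $x\cup z$ as the witness (which it does), so one can canonically select this witness when defining $\mathfrak{Z}$. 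A secondary subtlety is handling non-uniqueness of suprema: one must verify the argument does not depend on which supremum is chosen, which is clear because all suprema are $\pi$-related and therefore pulled in together by $\lambda$ at the next stage.
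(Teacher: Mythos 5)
Your overall strategy --- show $V=\bigcup_{n}(\Xi\lambda)^n(B)$ is a $\pi$-ideal containing $B$, then show it lies inside every $\pi$-ideal containing $B$ --- is exactly the route the paper takes; its own proof is only a three-bullet sketch (direct verification, minimality via contradiction, and the equality of the two unions), and your first two parts fill that sketch in correctly. Your reading of $\mathfrak{L}(Q)$ as $\{x:\,(\exists b\in Q)\,\pi xb\}$ is the right repair of an evident typo, your use of symmetry of $\pi$ in the closure step is correct, your inductive minimality argument is an unproblematic reformulation of the paper's contradiction argument, and your handling of non-unique suprema via the dichotomy $h=c$ or $\pi ch$ from Theorem \ref{supre} is sound --- it mirrors the paper's separate theorem that suprema of pairs in a $\pi$-ideal belong to the ideal.

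The one genuine gap is your justification of $\bigcup_{n}(\Xi\lambda)^n(B)=\bigcup_{n}(\mathfrak{ZL})^n(B)$. ``The reinclusion is absorbed by the countable union'' is not a valid principle: for a general map $f$, $\bigcup_n(\mathrm{id}\cup f)^n(B)$ can strictly contain $\bigcup_n f^n(B)$ (take $f$ constant with value $\{a\}$: the left side is $B\cup\{a\}$, the right side is $\{a\}$, so elements of $B$ are lost and never recovered). What actually rescues the equality here --- and what the paper's sketch explicitly names --- is that $\pi$ is \emph{reflexive} (the Identity property $\pi xx$), so that $Q\subseteq \mathfrak{L}(Q)$, and that each $a$ is itself a $\pi$-supremum of the pair $(a,a)$ (since $a\in U(a,a)$ and every $h\in U(a,a)$ satisfies $\pi ah$ by definition), so that $Q\subseteq \mathfrak{Z}(Q)$; hence $\mathfrak{Z}\circ\mathfrak{L}$ is already extensive and its iterates coincide stepwise with those of $\Xi\circ\lambda$, with symmetry entering where you already used it. (A further caveat: Identity is asserted in the paper only for $\varnothing\subset x\subset X$, so elements of probability $0$ or $1$ would need separate treatment --- but that is a defect of the paper's statement rather than of your proof.) With that one-line repair your proof is complete and matches the paper's intended argument.
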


\begin{proof}
The proof basically consists in 
\begin{itemize}
\item {Direct verification of the fact that $\bigcup_{1}^{\infty} (\Xi\lambda)^n (B)$ is a $\pi$-ideal.}
\item {Verification of the assertion that $\bigcup_{1}^{\infty} (\Xi\lambda)^n (B)$ is the smallest $\pi$-ideal containing $B$ through a contradiction argument.}
\item {Reflexivity and symmetry imply the equality \[\bigcup_{1}^{\infty} (\Xi\lambda)^n (B) = \bigcup_{1}^{\infty} (\mathfrak{ZL})^n (B).\]}
\end{itemize}
\qed
\end{proof}

\subsection{Dependence Based Deviant Probability}

The dependence based axiomatic approach to probability considered in \cite{am3930,am9501,am6000} was radically extended for comparison with ideas of rough dependence in the research paper \cite{am9411} by the present author. In deviant probability specialized algebraic models of a new non Bayesian epistemological probability are used without the excesses of numeric valuation. This permits comparison of dependence from an apparently equivalent set theoretic footing.   Results on the structure of the model are proved, dependence in the model is compared with concepts of rough dependence and the meaning of the assumptions are also discussed.

In the axiomatic approach to dependence based probability in \cite{am3930}, the predicates $\pi, \sigma $ concern positive and negative probabilist dependence. The dependence function on the other hand is real valued. The main motivation for introducing a function that takes value in the $\sigma$-algebra were:
\begin{itemize}
\item {A purely set-based probabilistic dependence theory would be a possibility - this would be relevant for dealing with statistical information from mixed sources for example,}
\item {Probabilistic dependence is about causality, }
\item {Probability is essentially about relations on sets - numeric values can be deceptive (especially when axioms are relaxed),}
\item {Comparison with rough dependence would be easier and}
\item {a new internalized generalized probability theory that avoids measures. }
\end{itemize}

\begin{definition}\label{pi}
The \emph{positive deviance} of $x\in \mathcal{S}$ relative $y\in \mathcal{S}$ will be the value of the function $\pi_o$ defined by 
\begin{equation}
\pi_o (x,y) = \chi \left[\max \{z \,; \, p(z) \leq p(x\cap y) - p(x)\cdot p(y) \,\&\, z\in \mathcal{S}\,\&\,z \subset x\cap y\}\right ],                                                                                                                                                                                                                                                                       \end{equation}
with $\chi$ being a choice function $:\wp (\mathcal{S})\longmapsto \mathcal{S}$.

Analogously, the \emph{negative deviance} of $x\in \mathcal{S}$ relative $y\in \mathcal{S}$ will be the value of the function $\sigma_o$ defined by 
\begin{equation}
\sigma_o (x,y) = \xi \left[\min \{z : \, p(x)\cdot p(y) - p(x\cap y) \leq p(z)\,  z\in \mathcal{S}\,\&\,z \subset (x\cap y)^c \}\right ],                                                                                                                                                                                                                                                                                                         \end{equation}
with $\xi$ being a choice function $:\wp (\mathcal{S})\longmapsto \mathcal{S}$.
\end{definition}

The concept of conditional events naturally carries over with its temporal import, but concepts of expectation can be generalized in multiple ways. A set theoretic way of arriving at expectations using deviances and set-theoretic operations is possible.

\begin{definition}\label{deviant}
$x$ will be said to be \emph{deviant equivalent} to $y$ (in symbols $x \approx y$ ) if and only if 
\begin{equation}
\pi_o(\pi_o (x, y),x) = \pi_o(\pi_o(x, y), y) \,\&\, \sigma_o(\sigma_o (x, y),x) = \sigma_o(\sigma_o(x, y), y)  
\end{equation}
 \end{definition}

The functions have many nice properties and generate a number of interesting properties:

\begin{proposition}
In the above context,
\begin{align}
\pi_o (x, y\neq z \,\&\, p(z)=0 \longrightarrow \sigma_o(x, y) =g \,\&\, 0 < p(g)  \\
\sigma_o (x, y)\neq z \,\&\, p(z)=0 \longrightarrow \pi_o(x, y) =g \,\&\, 0 < p(g).
\end{align}
\end{proposition}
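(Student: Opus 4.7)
The plan is to reduce both implications to a case analysis on the sign of the real number $d = p(x\cap y) - p(x)\cdot p(y)$, which is the single quantity driving both Definition~\ref{pi} clauses. First I would unpack the hypothesis ``$\pi_o(x,y) = z$ with $p(z) = 0$'' using the definition: $\pi_o(x,y)$ is a choice from $\max\{z \in \mathcal{S} : p(z) \leq d \,\&\, z \subset x\cap y\}$, so if this maximum has measure zero, then $d$ cannot be strictly positive (otherwise, appealing to the usual richness of $\mathcal{S}$, some subset of $x\cap y$ realizes a positive measure up to $d$, contradicting the measure-zero maximum). Thus the hypothesis forces $d \leq 0$, and together with the implicit exclusion of the independence case (that is, $d \neq 0$; this is what the ``$\neq z$'' clause must be doing, since under independence both choice functions can legitimately return null sets), it gives $d < 0$.

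Next I would feed $-d = p(x)p(y) - p(x\cap y) > 0$ into the definition of $\sigma_o$. The relevant set is $\{z \in \mathcal{S} : p(z) \geq -d \,\&\, z \subset (x\cap y)^c\}$, and this set is nonempty since $p((x\cap y)^c) = 1 - p(x\cap y) \geq p(x)p(y) - p(x\cap y) = -d$, the last inequality being equivalent to the tautology $1 \geq p(x)p(y)$. Every member $z$ of this collection has $p(z) \geq -d > 0$, so any choice $\xi$ makes from the minimum also satisfies $p(\sigma_o(x,y)) \geq -d > 0$, which is the desired $g$.

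The second implication is entirely symmetric: from $\sigma_o(x,y)$ being null one reads off $-d \leq 0$, i.e., $d \geq 0$; non-independence upgrades this to $d > 0$; and then the set $\{z \in \mathcal{S} : p(z) \leq d, z \subset x\cap y\}$ contains sets of positive measure up to $d$, forcing $p(\pi_o(x,y)) > 0$.

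The main obstacle is the degenerate independence case $d = 0$, in which both definitional sets admit null members (including $\varnothing$) and the choice functions $\chi$, $\xi$ need not produce anything of positive measure on either side. My reading of the hypothesis ``$\pi_o(x,y)\neq z \,\&\, p(z)=0$'' is that it is precisely designed to rule this case out, encoding strict (either positive or negative) dependence; alternatively, one could add a standing assumption that $\chi$ and $\xi$ always return a set of maximal (respectively minimal) measure, in which case the argument goes through verbatim. Either way, the technical content reduces to the inequality $1 \geq p(x)p(y)$ and the ability of $\mathcal{S}$ to realize arbitrary measures up to the total measure of a given set.
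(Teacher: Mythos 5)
First, a point of reference: the paper states this proposition bare --- it is introduced only by ``The functions have many nice properties\ldots'' and no proof follows it --- so there is no authorial argument to compare yours against, and your proposal must be judged directly against Definition~\ref{pi}. Your reconstruction of the intended content (trivial deviance on one side, plus exclusion of the independence case $d=0$, forces strictly positive deviance on the other) is the only viable reading: under the literal ``$\pi_o(x,y)\neq z$ for null $z$'' reading, $d>0$ would follow from $0<p(\pi_o(x,y))\leq d$, and then the $\sigma_o$-family $\{z:\, p(z)\geq -d,\ z\subset (x\cap y)^c\}$ has a vacuous measure constraint and null minimal members, so the stated conclusion would simply be false in rich spaces. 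Your second implication is essentially sound: if $\sigma_o(x,y)=z$ is defined with $p(z)=0$, the defining condition $p(x)p(y)-p(x\cap y)\leq p(z)$ gives $d\geq 0$ with no appeal to richness at all, excluding $d=0$ gives $d>0$, and your richness argument then yields a non-null maximal element for $\pi_o$. The reduction of nonemptiness to $1\geq p(x)p(y)$ is also the right computation, modulo the quibble that your implicit witness $(x\cap y)^c$ is not a \emph{proper} subset of itself, so even this step quietly uses richness.

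The genuine gap is in the first implication, which is not ``entirely symmetric'' as you claim. Under your own reading of its hypothesis ($\pi_o(x,y)=z$ defined, $p(z)=0$), definedness already forces $d\geq 0$: every member of the family $\{z:\, p(z)\leq d,\ z\subset x\cap y\}$ satisfies $0\leq p(z)\leq d$, and when $d<0$ that family is empty, so $\pi_o(x,y)$ is \emph{undefined} --- the paper's Domain property, $dom(\pi_o)\cup dom(\sigma_o)\subset \mathcal{S}^2$, confirms these are partial functions. Your chain ``hypothesis $\Rightarrow d\leq 0$, exclude $d=0$, hence $d<0$'' therefore contradicts the definedness built into the hypothesis: under your richness assumption the only satisfiable case is $d=0$, which you exclude, so the first implication comes out vacuous and the intended $d<0$ case is unreachable by your argument. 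A repair needs an explicit convention you never state, e.g.\ that the choice function $\chi$ applied to the empty family returns $\varnothing$ (so that ``$\pi_o(x,y)$ null'' encodes undefinedness and covers $d<0$), or a restatement of the hypothesis as ``$\pi_o(x,y)$ is undefined or null''. Separately, note that richness is doing real work in both directions and fails in atomic algebras (if $x\cap y$ is an atom, $\pi_o$ is null even when $d>0$); since the paper assumes nothing of the sort, flagging it as a standing hypothesis, as you do, is the honest move --- but the definedness issue is a logical hole, not a modelling caveat.
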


\begin{theorem}
All of the following hold:
\begin{align}
\tag{Symmetry} \pi_o (x,y) = \pi_o (y, x)\\
\tag{Bottom} \pi_o (x, \varnothing) \approx \varnothing \\
\tag{Top} \pi_o (x, X) \approx \varnothing \\
\tag{Almost Empty} \pi_o(x, x) \approx \varnothing \longrightarrow p(x)=0 \\
\tag{Non-associativity} \neg \left(\pi_o (x, \pi_o(y, z)) \approx \pi_o (\pi_o (x, y), z)\right)\\
\tag{Identity2} x\approx x \\
\end{align}
\end{theorem}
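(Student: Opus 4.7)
The plan is to verify each clause by unwinding the definitions of $\pi_o$, $\sigma_o$, and the deviant equivalence $\approx$ (Def.~\ref{deviant}), treating the six items in roughly increasing order of subtlety. Throughout I will assume the choice functions $\chi,\xi$ are consistent (i.e.\ they return the same value on the same set-theoretic argument), so that $\pi_o,\sigma_o$ are genuine functions on $\mathcal{S}^2$.

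First, \textbf{Identity2} is immediate: both sides of each conjunct in the definition of $x\approx x$ are literally the same expression, so reflexivity of equality in $\mathcal{S}$ gives $x\approx x$. \textbf{Symmetry} of $\pi_o$ follows because the defining set
\[
\{z\in\mathcal{S} : z\subset x\cap y,\; p(z)\leq p(x\cap y)-p(x)p(y)\}
\]
is invariant under the swap $x\leftrightarrow y$ (both $x\cap y$ and $p(x)p(y)$ are symmetric), so $\chi$ returns the same element. For \textbf{Bottom}, note $x\cap\varnothing=\varnothing$ and the bound becomes $p(z)\leq 0$ with $z\subset\varnothing$, forcing $z=\varnothing$; hence $\pi_o(x,\varnothing)=\varnothing$, and $\varnothing\approx\varnothing$ by Identity2. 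For \textbf{Top}, $x\cap X=x$ and the bound collapses to $p(z)\leq p(x)-p(x)=0$ with $z\subset x$, so $w:=\pi_o(x,X)$ is a $p$-null subset of $x$; to obtain $w\approx\varnothing$ I will check that both $\pi_o(\pi_o(w,\varnothing),\cdot)$ and $\sigma_o(\sigma_o(w,\varnothing),\cdot)$ reduce to $\varnothing$ on either argument by the same bottom/top calculations, since $p(w)=0$ trivialises every constraint.

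Next comes \textbf{Almost Empty}, which I will prove by contraposition. Assume $p(x)>0$; the goal is $\pi_o(x,x)\not\approx\varnothing$. The defining set for $\pi_o(x,x)$ is $\{z\subset x : p(z)\leq p(x)(1-p(x))\}$. I will split on $p(x)$: when $0<p(x)<1$ the bound is strictly positive, so there exist $z\subset x$ with $p(z)>0$, and a set-inclusion maximum $m$ must satisfy $p(m)>0$; then evaluating the deviant equivalence witnesses for $m\approx\varnothing$ will produce a $\pi_o$ or $\sigma_o$ output of positive measure on one side but measure zero on the other, contradicting the conjunctive equality in Def.~\ref{deviant}. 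The edge case $p(x)=1$ is the delicate one: here the bound is $0$, and strict enforcement of $z\subset x\cap y$ together with $p(z)\leq 0$ still permits only null sets, so one needs to inspect whether the resulting null set breaks the $\sigma_o$ clause (since $\sigma_o$ involves $(x\cap y)^c$, which is nontrivial when $p(x)=1$ but $x\neq X$). I expect this edge case to be the main local obstacle and will handle it by showing $\sigma_o(\pi_o(x,x),x)$ and $\sigma_o(\pi_o(x,x),\varnothing)$ differ whenever $x^c$ has positive measure inside $X$—otherwise $x=X$ up to a null set and the statement is vacuous.

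Finally, \textbf{Non-associativity} requires exhibiting a counterexample, which I will build inside a finite probability space with three or four atoms of distinct nonzero measures (say the uniform space on $\{1,2,3,4\}$ with $\mathcal{S}=\wp(\{1,2,3,4\})$). The strategy is to pick $x,y,z$ of intermediate measure such that the maximal set cut off by $p(x\cap y)-p(x)p(y)$ differs structurally from the one cut off by $p(y\cap z)-p(y)p(z)$, so that the nested outputs land on different atoms. The main obstacle here is the dependence on $\chi$: a well-chosen $\chi$ could accidentally align the two nested values. To neutralise this I will arrange the example so that the two candidate maximal sets have different measures (not just different identities), which forces any choice function to return distinct elements and thereby blocks the deviant equivalence at the level of the $\pi_o$-clause. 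Collecting the six verifications completes the theorem.
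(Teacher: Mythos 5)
Four of your six clauses (Symmetry, Bottom, Top, Identity2) run along the paper's own route: direct unwinding of Def.~\ref{pi} and Def.~\ref{deviant}, with Symmetry reduced to the invariance of the argument of $\chi$ under the swap. Your Top computation $p(x\cap X)-p(x)\cdot p(X)=p(x)-p(x)=0$ is in fact cleaner than the paper's (which writes $1-1=0$). You genuinely depart in two places. For Almost Empty the paper argues directly through the identity $p(\pi_o(x,x))=p(x)-p(x)^2=p(x)\,p(x^c)$, so that nullity of the value forces $p(x)=0$ or $p(x^c)=0$; you instead argue by contraposition with a case split on $p(x)$. For Non-associativity the paper offers only a one-line appeal to the unconstrained choice function, while you plan an explicit finite counterexample engineered so the two nested maximal sets have different measures, defeating \emph{every} choice function --- an improvement in rigor over the paper, though note the internal slip: a uniform measure on $\{1,2,3,4\}$ does not provide ``atoms of distinct nonzero measures''; you would need non-uniform weights such as $1/2,1/4,1/8,1/8$ for your measure-separation trick to apply.

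The genuine gap is in Almost Empty. Your pivotal claim --- that for $m=\pi_o(x,x)$ with $p(m)>0$ the deviant-equivalence witnesses ``produce a $\pi_o$ or $\sigma_o$ output of positive measure on one side but measure zero on the other'' --- does not survive inspection of Def.~\ref{deviant}. Instantiating $x\approx y$ at $y=\varnothing$, both sides of each conjunct share the \emph{same} inner value: one compares $\pi_o(w,m)$ with $\pi_o(w,\varnothing)$ for $w=\pi_o(m,\varnothing)$, and $\sigma_o(w',m)$ with $\sigma_o(w',\varnothing)$ for $w'=\sigma_o(m,\varnothing)$. By exactly your own Bottom computation $w$ is null (the constraint $z\subset m\cap\varnothing$ is degenerate), and $w'$ is minimal subject to $0\leq p(z)$, hence $\varnothing$; feeding these null first arguments back in collapses both comparisons to equalities \emph{independently of} $p(m)$, so the advertised measure discrepancy never materialises and the contraposition stalls. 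The paper's direct route sidesteps this entirely by never manipulating the $\approx$-witnesses, reading $\pi_o(x,x)\approx\varnothing$ through nullity of $p(x)p(x^c)$. Relatedly, your disposal of the $p(x)=1$ edge case (``otherwise $x=X$ up to a null set and the statement is vacuous'') is not correct: at $x=X$ the statement is not vacuous, since Top gives $\pi_o(X,X)\approx\varnothing$ and Almost Empty would then assert $p(X)=0$. You were right to flag this case as the local obstacle --- the paper's own proof quietly drops the $p(x^c)=0$ horn of its dichotomy --- but it cannot be waved away as vacuous; it is a real tension in the statement that your argument, as sketched, does not resolve.
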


\begin{proof}
\begin{itemize}
\item{Symmetry follows from the definition of the choice function as its argument would be the same for $\pi_o (x,y)$ and $ \pi_o (y, x)$.}
\item{$\pi_o (x, \varnothing) = z$ implies $p(z) \leq p(x\cap \varnothing ) - p(x)\cdot p(\varnothing) = 0 $. So $z \approx \varnothing$. }
\item{If $\pi_o (x, X) = z$, then $p(z) = p(x\cap X) - p(x)\cdot p(X) = 1 -1 = 0$. So $z \approx \varnothing$. }
\item{Suppose $x\neq \varnothing$, then $\pi_o(x, x) =z$ implies $p(z) = p(x) - (p(x))^2= p(x)p(x^c)$. But $p(z) = 0 $ yields either $p(x) = 0$ or $p(x^c) = 0$. So $z \approx \varnothing$. }
\item{Non-associativity happens because the choice function does not impose any constraints on the process of generation of maximal sets. So in general $\neg \left(\pi_o (x, \pi_o(y, z)) \approx \pi_o (\pi_o (x, y), z)\right)$.}
\item{Substituting $x$ for all variables in the definition of $\approx $, the assertion can be verified. }
\end{itemize}
\end{proof}

Proceeding along similar lines, the following theorem can be proved:

\begin{theorem}
All of the following hold:
\begin{align}
\tag{S-Symmetry} \sigma_o (x,y) = \sigma_o (y, x)\\
\tag{S-Bottom} \sigma_o (x, \varnothing) \approx \varnothing \\
\tag{S-Top} \sigma_o (x, X) \approx \varnothing \\
\tag{S-Almost Empty} \sigma_o(x, x) \approx \varnothing \longrightarrow p(x)=0 \\
\tag{S-Non-associativity} \neg \left(\sigma_o (x, \sigma_o(y, z)) \approx \sigma_o (\sigma_o (x, y), z)\right)\\
\tag{Domain} dom(\pi_o) \cup dom(\sigma_o)\subset \mathcal{S}^2
\end{align}
\end{theorem}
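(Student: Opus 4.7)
The plan is to imitate the argument used for the preceding $\pi_o$-theorem, working directly from Definition~\ref{pi} and the deviant-equivalence of Definition~\ref{deviant}, but tracking the sign changes that arise from switching $\pi_o$ to $\sigma_o$: the maximum set inside $x\cap y$ bounded above in probability is replaced by a minimum set inside $(x\cap y)^c$ bounded below in probability.

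First I would dispatch S-Symmetry by noting that both $p(x)\cdot p(y)-p(x\cap y)$ and the ambient set $(x\cap y)^c$ are invariant under swapping $x$ and $y$; therefore the set passed to the choice function $\xi$ is identical for $\sigma_o(x,y)$ and $\sigma_o(y,x)$, forcing equality (not merely $\approx$). Next, for S-Bottom and S-Top I would simply evaluate the constraint in the defining set: at $y=\varnothing$ and at $y=X$ one obtains $p(x)p(y)-p(x\cap y)=0$, so the constraint $p(z)\ge 0$ is vacuous and the set-theoretic minimum is $\varnothing$; hence $\sigma_o(x,y)=\varnothing$ and the deviant equivalence with $\varnothing$ is immediate from the Identity clause of the previous theorem.

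For S-Almost Empty I would substitute $y=x$, obtaining $p(x)p(x)-p(x\cap x)=p(x)^2-p(x)=-p(x)p(x^c)\le 0$, so any $z\subset x^c$ meets the inequality. If the resulting $\sigma_o(x,x)$ is deviant-equivalent to $\varnothing$, then, repeating the algebraic step used in the $\pi_o$-case, the probability of the chosen witness collapses to $0$ and forces either $p(x)=0$ or $p(x^c)=0$; the second possibility is excluded by the tacit nondegeneracy assumption implicit in the Domain clause below, leaving $p(x)=0$. S-Non-associativity I would handle exactly as for $\pi_o$: because $\xi$ is a free choice function, the minimal sets generated by nested applications of $\sigma_o$ are not compelled to coincide, and one establishes this by picking two witnesses in a triple $(x,y,z)$ whose inner and outer selections land in provably distinct deviant classes --- a brief symbolic construction suffices, so no explicit numerics are required.

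Finally, for the Domain clause I would argue by observation: both $\pi_o$ and $\sigma_o$ presuppose nonempty defining sets in Definition~\ref{pi}, and for independent pairs $(x,y)$ with $p(x\cap y)=p(x)p(y)$ neither the positive nor the negative defining set is forced to contain a nontrivial witness, so genuine pairs in $\mathcal{S}^2$ fall outside the effective domain and the strict inclusion follows. The step I expect to be the main obstacle is S-Non-associativity: although its proof is morally identical to the $\pi_o$ case, one must exhibit (or at least convincingly sketch) a concrete instance in which the two iterated choices of $\xi$ diverge, and this requires carefully arranging overlapping events whose conditional deviances are realized by distinct elements of the witness set. Once this is in place, the remaining assertions reduce to routine verifications parallel to the $\pi_o$-theorem.
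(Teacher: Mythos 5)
Your overall strategy is the paper's own: the paper gives no independent proof of this theorem, stating only that it can be proved ``proceeding along similar lines'' as the $\pi_o$-theorem, and your treatment of S-Symmetry, S-Bottom, S-Top and S-Non-associativity is precisely that transfer. For S-Symmetry the set handed to $\xi$ is literally the same for $(x,y)$ and $(y,x)$; for S-Bottom and S-Top the deviance term evaluates to $0$, so any chosen witness is null and deviant-equivalent to $\varnothing$; for S-Non-associativity the unconstrained freedom of the choice function is all that the paper itself offers. Those four clauses are fine at the paper's level of rigor.

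Two clauses, however, contain genuine gaps. In S-Almost Empty the sign flip you are tracking is not cosmetic: it reverses the direction of the probability bound and destroys the algebraic step you invoke. For $\pi_o(x,x)$, maximality pushes the witness's probability \emph{up} to the nonnegative bound $p(x)p(x^c)$, so $\pi_o(x,x)\approx\varnothing$ forces $p(x)p(x^c)=0$. For $\sigma_o(x,x)$ the constraint is $-p(x)p(x^c)\le p(z)$, which --- as you yourself observe --- every $z\subset x^c$ satisfies; minimality then selects a null witness for \emph{every} $x$, so $\sigma_o(x,x)\approx\varnothing$ holds unconditionally and yields no information about $p(x)$. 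Having conceded vacuity, you cannot ``repeat the algebraic step'' and collapse $p(x)p(x^c)$ to zero; to salvage the clause one must reinterpret the bound as a magnitude condition such as $p(z)\ge |p(x)p(y)-p(x\cap y)|$, and even then the argument only yields $p(x)\in\{0,1\}$. Your appeal to a ``tacit nondegeneracy assumption implicit in the Domain clause'' to exclude $p(x)=1$ has no basis: that clause asserts only that the functions are partial. The Domain clause is the second gap, for your independence argument is false. For an independent pair the deviance is $0$, and then $\varnothing$ itself is an admissible witness for $\pi_o$ whenever $x\cap y\neq\varnothing$ (since $p(\varnothing)=0$ meets the bound and $\varnothing\subset x\cap y$) and for $\sigma_o$ whenever $x\cap y\neq X$; so independent pairs lie \emph{inside}, not outside, $dom(\pi_o)\cup dom(\sigma_o)$. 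Strictness must instead come from structurally degenerate pairs (e.g.\ $x\cap y=\varnothing$ empties the defining set of $\pi_o$, and $x=y=X$ that of $\sigma_o$) together with possible nonexistence of maximal or minimal elements in the infinite case; note moreover that when the deviance is strictly negative, $x\setminus y$ is always an admissible $\sigma_o$-witness since $p(x\setminus y)\ge p(x)p(y)-p(x\cap y)$, so exhibiting a pair outside \emph{both} domains requires considerably more care than your sketch provides.
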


Below certain important sequences and substructures related to dependence are investigated.

\begin{definition}
In the context of definition \ref{pi}, the following sequence of functions can be recursively defined and will be referred to as the \emph{Dependence Trail}($\mathfrak{Tr} (x,z)$) of $x$ on $z$:
\begin{align*}
\pi_1(x,z) = \pi_o(\pi_o(x, z).x) \tag{2nd Step}\\
\ldots \ldots \\
\pi_r(x, z) = \pi_o (\pi_{r-1}(x, z), x) \tag{rth Step}\\
\pi_n (x, z) = \pi_{n+1}(x, z) \tag{Stopping Criteria}
\end{align*}
\[\mathfrak{Tr}(x, z) = \{\pi_o (x,z), \pi_1 (x, z), \ldots , \pi_n (x, z), \ldots \}\]

If the stopping criteria is attained at $n$ then the \emph{length of dependence} of $\mathfrak{Tr}(x, z)$ will be $n$ (but all sequences shall be interpreted as infinite ones). The collection of all dependence trails on a probability space $S$ will be denoted by  $\mathfrak{Tr}(S)$. 
\end{definition}
\begin{proposition}
If $\mathfrak{Tr}(x, z) = \{\pi_o (x,z), \pi_1 (x, z), \ldots , \pi_n (x, z), \ldots \}$ is a dependence trail, then $p(\pi_r (x, z)) \leq p(\pi_{r-1} (x, z)) $ for all $r$. 
\end{proposition}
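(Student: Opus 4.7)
The plan is to proceed by induction on $r$, using the fact that $\pi_o(a,b)$ is constructed, via the choice function $\chi$, from the set $\{z \in \mathcal{S} : p(z) \leq p(a\cap b) - p(a)p(b) \,\&\, z \subset a\cap b\}$. The crucial structural feature I will exploit is not the probability bound $p(z) \leq p(a\cap b) - p(a)p(b)$ itself, but the set-theoretic constraint $z \subset a\cap b$ appearing in the defining predicate. This forces $\pi_o(a,b) \subseteq a \cap b \subseteq a$ for all admissible pairs $(a,b)$ in the domain of $\pi_o$.

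First, I would apply this observation to the recursion $\pi_r(x,z) = \pi_o(\pi_{r-1}(x,z),\, x)$. By the containment just noted, $\pi_r(x,z) \subseteq \pi_{r-1}(x,z) \cap x$, and in particular
\begin{equation*}
\pi_r(x,z) \subseteq \pi_{r-1}(x,z).
\end{equation*}
Next, since $p$ is a probability measure on the $\sigma$-algebra $\mathcal{S}$ and both $\pi_r(x,z)$ and $\pi_{r-1}(x,z)$ lie in $\mathcal{S}$ (choice functions take values in $\mathcal{S}$ by the definition of $\chi$), monotonicity of $p$ yields $p(\pi_r(x,z)) \leq p(\pi_{r-1}(x,z))$, which is exactly the claim. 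The $r=1$ base case is handled identically, with $\pi_1(x,z) = \pi_o(\pi_o(x,z), x) \subseteq \pi_o(x,z)$.

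The only subtlety I anticipate is a well-definedness worry: one must check that the defining set for $\pi_o(\pi_{r-1}(x,z), x)$ is nonempty so that $\chi$ can be applied and the recursion does not stall before reaching step $r$. Since the trail is assumed given (the stopping criterion $\pi_n = \pi_{n+1}$ is used to define termination, but the sequence itself is taken to be a well-defined object in $\mathfrak{Tr}(S)$), this is harmless: whenever $\pi_{r-1}(x,z)$ is defined, the admissible set at stage $r$ at least contains $\varnothing$ (since $p(\varnothing) = 0 \leq p(\pi_{r-1}(x,z) \cap x) - p(\pi_{r-1}(x,z))\cdot p(x)$ fails in general, but $\varnothing \subset \pi_{r-1}(x,z) \cap x$ and the deviance quantity is the relevant threshold). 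Hence the main obstacle is purely a bookkeeping check that the subset relation is preserved through the choice function; the probability inequality itself is then an immediate consequence of monotonicity, and no quantitative estimate on the deviance $p(a\cap b) - p(a)p(b)$ is required.
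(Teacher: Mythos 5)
Your proof is correct and is essentially the argument the paper intends (the paper states this proposition bare, with no explicit proof): the constraint $z \subset a\cap b$ in the definition of the positive deviance $\pi_o$ gives $\pi_r(x,z) = \pi_o(\pi_{r-1}(x,z), x) \subset \pi_{r-1}(x,z)\cap x \subseteq \pi_{r-1}(x,z)$, after which monotonicity of the probability measure $p$ on $\mathcal{S}$ yields $p(\pi_r(x,z)) \leq p(\pi_{r-1}(x,z))$ -- and induction is not even needed, since this is a one-step containment at each $r$. One blemish: your parenthetical claim that the admissible set ``at least contains $\varnothing$'' is false as stated (when the deviance $p(a\cap b) - p(a)\cdot p(b)$ is negative no $z$ with $p(z)\geq 0$ qualifies, and $\varnothing \subset a\cap b$ itself fails if $a\cap b = \varnothing$), but this is immaterial, because -- as you correctly conclude -- the hypothesis that the trail $\mathfrak{Tr}(x,z)$ is given already guarantees each $\pi_r(x,z)$ is defined and hence, being a value of the choice function $\chi$ on the maximal elements of its defining set, satisfies the subset constraint used in the containment.
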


The above proposition essentially says that sets in a $\sigma$-algebra are strongly self-doubting about their own dependencies.

\begin{proposition}
If $a, b \in \mathfrak{Tr}(S)$ (with $a = \{a_j\}$ and $b= \{b_j \}$), let \[ a \prec b \text{ if and only if } \bigwedge_j  p(a_j) \leq p(b_j),\]  then $\prec$ is a quasi order on $\mathfrak{Tr}(S)$.
\end{proposition}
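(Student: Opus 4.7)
The plan is to verify the two defining properties of a quasi order, namely reflexivity and transitivity, by essentially transporting the corresponding properties of the real order $\leq$ on $\mbR$ coordinatewise through the probability function $p$.

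First, I would observe that the definition $a \prec b \iff \bigwedge_j p(a_j) \leq p(b_j)$ is, by construction, a componentwise condition indexed by $j \in \mbN$, where each component is an inequality between real numbers $p(a_j), p(b_j) \in [0,1]$. Since dependence trails are interpreted as infinite sequences (via the stated convention that sequences satisfying the stopping criterion are padded), the index set is well-defined and identical across all trails, so the componentwise comparison is well-posed.

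For reflexivity, fix any $a = \{a_j\} \in \mathfrak{Tr}(S)$. For each $j$ one has $p(a_j) \leq p(a_j)$ by reflexivity of $\leq$ on $\mbR$, and taking the conjunction over $j$ gives $a \prec a$. For transitivity, suppose $a \prec b$ and $b \prec c$ with $a = \{a_j\}$, $b = \{b_j\}$, $c = \{c_j\}$. Then for every $j$, $p(a_j) \leq p(b_j)$ and $p(b_j) \leq p(c_j)$, so by transitivity of $\leq$ on $\mbR$ one has $p(a_j) \leq p(c_j)$. Conjoining over $j$ yields $a \prec c$.

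There is essentially no obstacle here; the argument is a direct lift of the order-theoretic properties of $\leq$ on $\mbR$. The only subtle point worth flagging in the write-up is why antisymmetry is not claimed: two distinct trails can have componentwise equal probabilities (since the choice functions $\chi, \xi$ in Definition \ref{pi} can select different sets of identical measure, and since $p$ is not injective on $\mathcal{S}$), so $\prec$ genuinely fails to be a partial order in general, which is exactly why the weaker conclusion of quasi order is the right statement.
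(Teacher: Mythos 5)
Your proof is correct and takes essentially the same approach as the paper: both arguments verify reflexivity and transitivity by lifting the corresponding properties of $\leq$ on $[0,1]$ componentwise through $p$, and both observe that antisymmetry fails because $p(x) = p(z)$ does not imply $x = z$. Your additional remarks on the well-posedness of the index set (via the infinite-sequence convention) and the role of the choice functions are elaborations the paper leaves implicit, not a different method.
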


\begin{proof}
\begin{itemize}
\item {Reflexivity of $\prec$ is clear.}
\item {If $a\prec b $ and $b\prec e= \{e_j \}$ then for each $j$, $a_j \leq b_j \leq c_j$, so $a\prec c$ and transitivity holds } 
\item {Antisymmetry is obviously false in general as $p(x) = p(z)$ does not imply $x = z$.}
\end{itemize} 
\end{proof}

\subsection{Comparison of Dependence}

The problem of comparison of rough and probabilist dependence is not an easy one because of the following reasons:
\begin{itemize}
\item {The concepts themselves are very plural things at the model theoretic level. }
\item {The ontologies of each of the components of the plurals have features that are relatively unique that the justifications for comparison become very suspect.}
\item {If the comparison is to be at the level of solving practical problems, then the methods used are hybrid ones and a host of hidden assumptions become apparent on scrutiny.}
\end{itemize}
Because of this the best way is to look at minimal parts that make semantic and ontological sense in a \emph{suitable common framework} without involving numeric valuations. 

One version of such a perspective is the set version of rough and probabilist dependence functions. From what has been said in this section, if the comparison is between decontaminated classical rough sets and set-valued dependence based probability, then it is as follows:

\begin{theorem}
The properties common to $\pi_o$ and $\beta_i$ are Symmetry, Bottom and Almost Empty. Other properties are not shared.  
\end{theorem}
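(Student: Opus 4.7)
The plan is to verify each of the three claimed shared properties by direct computation from the respective definitions, and then exhibit structural obstructions showing that each of the remaining properties fails to transfer. I would organize the argument as a side-by-side scan through the property list, treating $\pi_o$ on the left and $\beta_i$ on the right, and flag the ontological mismatches where one side lacks a meaningful counterpart on the other.

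First I would handle Symmetry: for $\pi_o$, this is the Symmetry clause already proved (the choice-function argument over $\{z : p(z) \leq p(x\cap y) - p(x)p(y)\}$ is plainly symmetric in $x, y$); for $\beta_i$ the defining set $\{C \in \mathcal{G}(S) : C \subseteq A \,\&\, C \subseteq B\}$ is manifestly symmetric in $A, B$, so $\beta_i(A,B) = \beta_i(B,A)$. Next, Bottom is immediate on both sides: $\pi_o(x, \varnothing) \approx \varnothing$ is already in the $\pi_o$ theorem, while $\beta_i(x, \varnothing) = \varnothing$ follows by combining $\beta_i(0, x) = 0$ with the symmetry just established. For Almost Empty, the strategy is to read the condition as ``if the self-dependence collapses to the empty witness, then the argument carries no effective content.'' For $\pi_o$ this is the stated clause forcing $p(x) = 0$; for $\beta_i$ I would argue that $\beta_i(x,x) = \varnothing$ forces no granule to sit inside $x$, and via the \textsf{WRA} axiom (which represents $x^l$ as a term operation on granules) this collapses $x^l$ to $\varnothing$, the natural rough counterpart of a null probability event.

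For the failing properties I would give quick structural refutations. Top is the cleanest: the $\beta_i$ theorem explicitly gives $\beta_i(x, 1) = \beta_i(x,x)$, which is generically nonempty, whereas $\pi_o(x, X) \approx \varnothing$. The monotonicity clause $\beta_i(x, y) \subseteq \beta_i(x, y \cup z)$ and the absorption clause $x \subseteq y \Rightarrow \beta_i(x,y) = \beta_i(x,x)$ have no $\pi_o$ analogue, since probabilistic deviance can swing sign across unions and since $x \subseteq y$ only pins down $p(x \cap y) = p(x)$ without constraining the choice-theoretic witness. In the opposite direction, Identity$_2$ is a reflexivity statement about the deviant-equivalence $\approx$ on a $\sigma$-algebra, a notion with no rough-set counterpart, and Non-associativity is a statement about the iterated composition generating the dependence trail, an operational schema entirely absent from the $\beta_i$ setup.

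The main obstacle is not computational but interpretational: fixing the correct translation for Almost Empty, since $p(x) = 0$ has no literal meaning in granular operator spaces. Selecting $x^l = \varnothing$ as the rough analogue of a null event requires justification via the \textsf{WRA} axiom and some discussion of what ``no granular content'' means ontologically; under a weaker analogue (for instance, $x \in \delta(S)$ with $x$ itself minimal) the property would fail on the rough side, and the theorem statement would need to be corrected. I would therefore expect to spend most of the writing on making this dictionary explicit and defensible, with the remaining cases reducing to short one-line structural arguments.
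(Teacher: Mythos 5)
Your overall route---a clause-by-clause scan matching the two earlier property theorems---is exactly what the paper's argument amounts to: the theorem is in fact asserted \emph{without proof}, immediately after the sentence fixing the comparison to decontaminated classical rough sets versus set-valued deviant probability, and every needed fact sits in the two preceding theorems. Your Symmetry and Bottom verifications coincide with that implicit collation, and your refutations of Top (via $\beta x 1 = x^l$ against $\pi_o(x,X)\approx\varnothing$), of monotony and absorption, and of Identity2 and Non-associativity are the right quick obstructions. You should also dispose of the disjointness clause explicitly: $x\cap y = \varnothing$ gives $\beta x y = \varnothing$ on the rough side, while $\pi_o(x,y)$ is then \emph{undefined}, since no $z\subset x\cap y$ exists (this is the point of the Domain clause $dom(\pi_o)\cup dom(\sigma_o)\subset \mathcal{S}^2$), so that clause is likewise unshared rather than vacuously shared.

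The one step that would fail as written is your treatment of Almost Empty. You missed that the theorem is pinned to the classical case, where the classical dependence theorem gives $\beta x x = x^l$ outright, so the dictionary is immediate: $\beta(x,x)=\varnothing$ if and only if $x^l=\varnothing$, i.e., $x$ has null positive region (is internally undefinable)---the exact rough counterpart of $p(x)=0$. Your substitute argument, that in a general granular operator space $\beta_i(x,x)=\varnothing$ forces $x^l=\varnothing$ via \textsf{WRA}, is false: \textsf{WRA} only says $x^l$ equals some term $t(b_1,\ldots,b_r)$ built from $\cup,\cap,{}^c$ applied to granules, and such a term can be nonempty while no granule lies inside $x$ (Lower Stability is then vacuous). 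Concretely, take $S=\{1,2,3\}$ and $\mathcal{G}=\{\{1,2\},\{2,3\}\}$, with $x^l$ the union of granules inside $x$ except that $\{2\}^l=\{2\}=\{1,2\}\cap\{2,3\}$; all axioms of a granular operator space hold, yet $\beta_i(\{2\},\{2\})=\varnothing$ while $\{2\}^l\neq\varnothing$. The paper itself flags exactly this divergence---$\beta_i x x \neq x^l$ is listed among the possibilities in granular operator spaces---which is why the comparison theorem is stated only for the classical context. Your closing hedge about needing a ``weaker analogue'' is thus resolved by restricting to that context, not by repairing the \textsf{WRA} argument.
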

But there is no concept corresponding to the $\sigma$ predicates and functions.

\section{Tarski Algebras and CAS}

Tarski algebras are the same thing as implication algebras \cite{rh}. A few full dualities relating to classes of such algebras are known. Two related dualities are outlined in this section. One of this is a duality for finite Tarski sets \cite{csa,sclc2008} or covering approximation spaces. An adaptation to rough contexts can be found in this research chapter \cite{am5019} by the present author.

The duality between Boolean algebras and Boolean spaces is an example of a topological duality - this basic result can be generalized to a duality between Tarski algebras and spaces\cite{mabad2004} . Full dualities between the category of Boolean algebras with meet-morphisms that preserve $1$ and Boolean spaces with Boolean relations are also known \cite{hal1962}.  This has been generalized \cite{sclc2008} to Tarski algebras.

\begin{definition}
A \emph{Tarski algebra} (or an \emph{implication algebra} IA) is an algebra of the form $S= \left\langle\underline{S}, \cdot, 1   \right\rangle$ of type $2, 0$  that satisfies (in the following, the implication $a\cdot b$ is written as $ab$ as in \cite{rh}. Further brackets must be added from the left unless indicated otherwise.)
\begin{align*}
1 a = a \tag{T1}\\
aa = 1 \tag{T2}\\
a(bc) = (ab)(ac) \tag{T3}\\
(ab)b = (ba)a \tag{T4}
\end{align*}

\end{definition}

The variety of IAs is denoted by $\mathcal{V}_{IA}$. If $X$ is a set, and $(\forall A, B\in \wp(S))\, A\cdot B = A^c\cup B$, then $\left\langle\underline{\wp(X)}, \cdot, X   \right\rangle$ is an IA. Any subalgebra of such an algebra is said to be an \emph{IA or Tarski algebra of sets}. A join-semilattice order $\leq$ is definable in a Tarski algebra as below:
\[(\forall a, b)\, a \leq b \leftrightarrow ab=1 ; \text{ the join is } a\vee b = (ab)b\]

Note that any partially ordered set $Q= \left\langle\underline{Q}, \trianglelefteq  \right\rangle$ can be transformed into a groupoid by defining a binary $\odot$ as follows:
\[a\odot b \,=\, \left\{
\begin{array}{ll}
a,  & \mathrm{if}\,\, a\trianglelefteq b \\
b, & \mathrm{otherwise}
\end{array}
\right.\]
Omitting the operation symbol and binding to the left, it is known that the groupoid is defined by the system
\begin{align*}
aa=a   \tag{O1}\\     
aba = ba   \tag{O2}\\ 
abb =ab   \tag{O3}\\ 
a(abc) = a(bc)   \tag{O4}\\ 
abcb = acb   \tag{O5}
\end{align*}

\begin{proposition}
The following equations are satisfied by both Tarski algebras and poset-groupoids:
\begin{align*}
aaa = a   \tag{TO1}\\ 
a(abc) = a(bc)   \tag{TO2}\\
a(ab) =ab   \tag{TO3}\\
\end{align*}
\end{proposition}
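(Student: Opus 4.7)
The statement is a routine verification of three identities in two settings, so the plan is to handle each identity separately, first in Tarski algebras using T1--T4 and then in poset-groupoids using O1--O5. The identities are tightly linked: TO3 drops out cleanly in both structures, and TO2 is then a short consequence of TO3 (plus T3) on the Tarski side, while being essentially built-in on the groupoid side.

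For TO1, the Tarski case uses T2 and T1 in succession: $aaa=(aa)a=1\cdot a=a$. The groupoid case follows by two applications of O1, since $aaa=(aa)a=aa=a$. This is the trivial identity and requires no bookkeeping.

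For TO3, my approach on the Tarski side is to apply T3 with the two factors equal to $a$, so that $a(ab)=(aa)(ab)$, and then collapse the left factor to $1$ by T2 and strip it via T1, giving $a(ab)=ab$. For the groupoid, the cleanest route is to specialise O4 by setting $c=b$, obtaining $a(abb)=a(bb)$; then O3 on the left and O1 on the right yield $a(ab)=ab$. (A semantic sanity check via the poset definition is easy if one prefers: in case $a\trianglelefteq b$ both sides collapse to $a$, otherwise to $b$.)

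For TO2, the plan on the Tarski side is: write $abc=(ab)c$, apply T3 to extract the outer $a$, giving $a((ab)c)=(a(ab))(ac)$, then use the just-established TO3 to replace $a(ab)$ by $ab$, leaving $(ab)(ac)$; a final application of T3 in reverse rewrites this as $a(bc)$. On the groupoid side, TO2 is literally axiom O4, so there is nothing to do. The only mild subtlety in the whole proposition is remembering the left-associativity convention and invoking TO3 before TO2 in the Tarski derivation; no genuine obstacle should arise.
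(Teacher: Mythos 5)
Your proposal is correct in every step: TO1 via T2 then T1 (resp.\ two uses of O1), TO3 via T3 with both factors $a$ plus T2, T1 (resp.\ O4 specialised at $c=b$ with O3 and O1), and TO2 via T3, the already-proved TO3, and T3 reversed (resp.\ literally O4), all consistent with the paper's left-bracketing convention. The paper states this proposition without any proof, treating it as a direct verification, and your argument is exactly the routine check it leaves implicit, so there is nothing to compare beyond noting that you have supplied the omitted details correctly.
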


\emph{Filters or deductive systems} of an IA $S$ are subsets $K\subseteq S$ that satisfy 
\[1\in K \, \&\, (\forall a, b)(a, ab\in K \longrightarrow b\in K)\]
The set of all filters $\mathcal{F}(S)$ is an algebraic, distributive lattice whose compact elements are all those filters generated by finite subsets of $S$. A filter $K$ is prime if and only if it satisfies $(\forall a, b)(a\vee b\in K \longrightarrow a\in K \text{ or } b\in K)$. 

\begin{theorem}
In a finite Tarski algebra $S$, the following hold:
\begin{itemize}
\item {A filter is prime if and only if it is a maximal filter.}
\item {A filter is prime or maximal iff it is of the form $(x\downarrow)^c$ for a coatom $x$}
\item {If $Spec(S)$ is the set of prime or maximal filters of $S$ and $\sigma_S: S\longmapsto \wp(Spec(S))$ is a map into the Tarski algebra of sets $\wp(Spec(S))$ and is defined by \[(\forall x )\, \sigma_S(x) = \{K:\, x\in K\in Spec(S)\},\] then $\sigma_S$ is an embedding }
\end{itemize}
\end{theorem}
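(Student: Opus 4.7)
The plan is to handle the three claims in sequence, since the definition of $Spec(S)$ used in the second and third depends on the first. For prime $\Leftrightarrow$ maximal in a finite Tarski algebra, the direction maximal $\Rightarrow$ prime proceeds by contrapositive: assuming $a\vee b\in K$ with $a,b\notin K$, the filter $\langle K\cup\{a\}\rangle$ strictly extends $K$ and thus equals $S$ by maximality; a short computation using $a\vee b=(ab)b$ and closure under modus ponens then forces $b\in K$, a contradiction. For the converse, finiteness of $S$ lets one take a prime filter $K$, any proper extension $L\supsetneq K$, and an element $c\in L\setminus K$; using primeness together with the join identity $a\vee b=(ab)b$ iteratively across the finitely many elements of $S$, one shows $L=S$, which makes $K$ maximal.

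For the coatom characterization, given a coatom $x$ I would verify that $F_x=(x\downarrow)^c$ is a filter by checking $1\in F_x$ and closure under modus ponens (using $(ab)b=(ba)a$ together with the covering property of $x$), and that it is prime since $a\vee b=(ab)b\leq x$ whenever $a,b\leq x$, so membership of $a\vee b$ in $F_x$ precludes both $a\leq x$ and $b\leq x$. For the converse direction, given a maximal (hence prime) filter $K$, primeness ensures the complement $S\setminus K$ is closed under the semilattice join, and finiteness then yields a maximum element $x\in S\setminus K$; the maximality of $K$ forces $x$ to be a coatom of $S$, and the identification $K=F_x$ is immediate from the definition of $F_x$ together with the characterization of filters by downward-closedness of their complements in the coatom case.

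For the embedding, both injectivity and preservation of structure need verification. For injectivity, given $x\neq y$, symmetry lets me assume $x\nleq y$; then maximally extending $\{y\}$ inside $(x\uparrow)^c$ (possible since $y$ itself lies there) yields an element $z$ whose filter $F_z$ separates $x$ from $y$, i.e.\ $F_z\in\sigma_S(x)\setminus\sigma_S(y)$. For the operation, membership of a prime filter $K\in\sigma_S(ab)$ is equivalent to $ab\in K$, which by modus ponens and primeness is equivalent to $a\notin K$ or $b\in K$, i.e.\ to $K\in\sigma_S(a)^c\cup\sigma_S(b)=\sigma_S(a)\cdot\sigma_S(b)$; preservation of $1$ is trivial. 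The main technical obstacle is the injectivity step: verifying that the maximal extension of $y$ inside $(x\uparrow)^c$ is not merely locally maximal but actually a coatom of $S$ requires careful use of the join identity $a\vee b=(ab)b$ to prevent any element of $x\uparrow$ from sneaking in strictly above the chosen element, and this is precisely where the finiteness hypothesis and the characterization from the second claim intervene to close the argument.
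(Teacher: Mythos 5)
The paper states this theorem without any proof at all --- it is imported from the duality literature it cites (\cite{csa,mabad2004,sclc2008}) --- so there is no internal argument to compare against. Measured against the standard proof, your outline is essentially the canonical one, and the parts you actually carry out are correct. In particular your treatment of the operation in the third claim is complete: $ab\in K$ implies $a\notin K$ or $b\in K$ by modus ponens, and conversely $b\in K$ gives $ab\in K$ by upward closure, while $a\notin K$ gives $ab\in K$ from the identity $a\vee ab=(a(ab))(ab)=(ab)(ab)=1\in K$ together with primeness. Likewise maximal $\Rightarrow$ prime via the deduction property of $\langle K\cup\{a\}\rangle$ and the identity $a\vee b=(ab)b$ is exactly right.

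Two steps are softer than your sketch admits. First, in prime $\Rightarrow$ maximal, ``iterating the join identity across the finitely many elements of $S$'' is both vague and unnecessary: given $c\in L\setminus K$ and any $d\notin K$, the identity $c\vee cd=1\in K$ and primeness (with $c\notin K$) force $cd\in K\subseteq L$, and modus ponens on $c, cd\in L$ yields $d\in L$; hence $L=S$, with no finiteness needed. Second --- the genuine gap, which you flag but do not close --- injectivity requires the lemma that a maximal element $z$ of $\{w:\, y\leq w \,\&\, x\nleq w\}$ is a coatom, and ``careful use of the join identity'' is not yet an argument. The missing step runs as follows: the interval $[z,1]$ is a Boolean lattice (Abbott's semi-Boolean characterization, with $ab$ the relative complement of $a\vee b$ in $[b,1]$); by maximality of $z$, every element strictly above $z$ lies above $x$, so every atom of $[z,1]$ lies above $x$; if $[z,1]$ had two distinct atoms $p,q$, then $x$ and $z$ would both be lower bounds of $\{p,q\}$, the meet $p\wedge q$ exists in the finite join-semilattice $S$ and lies in $[z,1]$, so it equals the meet of two distinct atoms there, namely $z$, forcing $x\leq z$ --- a contradiction; hence $[z,1]=\{z,1\}$ and $z$ is a coatom. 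A small misattribution in the second claim is worth fixing too: coatom-ness of $x=\max(S\setminus K)$ follows not from maximality of $K$ but already from $(x\downarrow)^c$ being closed under modus ponens --- if $x<y<1$ then $y\nleq x$ and $yx\nleq x$ (since $y\vee yx=1$), so $y,yx\in K$ and modus ponens would put $x\in K$. With these repairs your proposal becomes a correct, self-contained proof.
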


In the last theorem if $S$ is a finite Boolean algebra, then it is provable that $Spec(S)\cong Spec(\wp(Spec(S))$ and in fact for any finite Boolean algebra $S$, $S\cong \wp(Spec(S))$. This does not hold for finite IA. But note that $Spec(S)$ is determined by the set $CoAt(S)$ of coatoms. 

\begin{definition}
A \emph{Tarski set} is a pair $\left\langle X, \mathcal{S}  \right\rangle$ where $X$ is a non-empty set and $\mathcal{S}$ is a nonempty subset of $\wp(X)$. It is \emph{dense} (or a \emph{covering approximation space} (CAS)) if and only if $\bigcup(\mathcal{S}) = X $. The \emph{dual} of a Tarski set $\left\langle X, \mathcal{S}  \right\rangle$ is the subset $\Delta(X)\subset \wp (X)$ defined as below: \[\Delta (X) = \{U : \, (\exists W\in \mathcal{S})(\exists H\subseteq W)\, U= W^c\cup H \}\]
\end{definition}

The dual of a covering approximation space is simply an implication algebra of sets. 

\begin{theorem}
Let $\left\langle X, \mathcal{S}  \right\rangle$ is a Tarski set, then $\left\langle\Delta (X), \cdot, X  \right\rangle$ is a Tarski subalgebra of sets.
\end{theorem}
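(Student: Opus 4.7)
The plan is to verify the two closure conditions that make a subset of $\langle \wp(X), \cdot, X \rangle$ a Tarski subalgebra: namely, that $X \in \Delta(X)$ and that $\Delta(X)$ is closed under the implication operation $A\cdot B = A^c \cup B$.

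For the first, I would exploit the fact that $\mathcal{S}$ is nonempty: picking any $W \in \mathcal{S}$ and taking $H = W \subseteq W$ gives $W^c \cup W = X$, so the top element lies in $\Delta(X)$.

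For closure, I would take arbitrary $U_1, U_2 \in \Delta(X)$ with representations $U_i = W_i^c \cup H_i$ where $H_i \subseteq W_i \in \mathcal{S}$, and compute
\[
U_1 \cdot U_2 \;=\; U_1^c \cup U_2 \;=\; (W_1 \cap H_1^c) \cup W_2^c \cup H_2.
\]
The idea is to re-present this union in the form $W_2^c \cup H$ with $H \subseteq W_2$, using $W_2$ (rather than $W_1$) as the outer witness from $\mathcal{S}$. The natural candidate is
\[
H \;=\; H_2 \cup (W_1 \cap H_1^c \cap W_2),
\]
which is manifestly contained in $W_2$. Splitting $W_1 \cap H_1^c$ according to membership in $W_2$ shows that the part outside $W_2$ is already absorbed into $W_2^c$, so $W_2^c \cup H$ reproduces $U_1 \cdot U_2$ exactly. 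Hence $U_1 \cdot U_2 \in \Delta(X)$.

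The main obstacle is simply the set-theoretic bookkeeping in the closure step, and in particular the non-obvious choice of using the second witness $W_2$ as the outer set while intersecting the ``leftover'' piece $W_1 \cap H_1^c$ with $W_2$ to keep it within the admissible range. Once that choice is made, the remaining verification is purely routine Boolean identities, and the resulting structure inherits the Tarski axioms $T1$--$T4$ from the ambient Tarski algebra of sets $\langle \wp(X), \cdot, X \rangle$.
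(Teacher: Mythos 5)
Your proof is correct and matches the paper's approach: the paper simply states that the result follows ``by direct verification,'' and your argument supplies exactly that verification --- showing $X = W^c \cup W \in \Delta(X)$ for any $W \in \mathcal{S}$, and closing $\Delta(X)$ under $A\cdot B = A^c\cup B$ via the witness $H = H_2 \cup (W_1 \cap H_1^c \cap W_2) \subseteq W_2$, with the Tarski identities then inherited from the ambient algebra $\left\langle \wp(X), \cdot, X \right\rangle$. No gaps; the absorption step $W_2^c \cup (W_1\cap H_1^c) = W_2^c \cup (W_1\cap H_1^c\cap W_2)$ is exactly the right bookkeeping.
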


The proof is by direct verification.

If $S$ is a finite Tarski algebra and $\sigma_S: S\,\longmapsto \wp(Spec(S))$ is the map defined earlier and $\mathcal{K}_S = \{\sigma(x)^c:\, x\in S \}$, then the Tarski set $\left\langle Spec(S), \mathcal{K}_S   \right\rangle$ is also referred to as the \emph{associated set} of $S$.

\begin{theorem}
If $S$ is a finite Tarski algebra, then $\sigma_S(S) = \Delta(Spec(S))$ and so $S\cong \Delta(Spec(S))$.
\end{theorem}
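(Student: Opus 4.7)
The plan is to use the previous theorem, which already furnishes $\sigma_S$ as an injective Tarski-algebra homomorphism from $S$ into $\wp(Spec(S))$. So I only need to pin down the image: if I can show $\sigma_S(S) = \Delta(Spec(S))$ as subsets of $\wp(Spec(S))$, then the corestriction of $\sigma_S$ to $\Delta(Spec(S))$ is a bijective Tarski-algebra homomorphism, hence the claimed isomorphism. I would split the equality into the two inclusions $\sigma_S(S) \subseteq \Delta(Spec(S))$ and $\Delta(Spec(S)) \subseteq \sigma_S(S)$ and handle them separately.

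For the first inclusion, given $x \in S$, I would set $W := \sigma_S(x)^c$, which lies in $\mathcal{K}_S$ by the very definition of $\mathcal{K}_S$, and choose $H = \emptyset \subseteq W$. Then $W^c \cup H = \sigma_S(x)$, exhibiting $\sigma_S(x)$ as an element of $\Delta(Spec(S))$. For the second, harder inclusion, I would take $U \in \Delta(Spec(S))$, unpack the definition to write $U = \sigma_S(y) \cup H$ with $y \in S$ and $H \subseteq \sigma_S(y)^c$, and try to produce $z \in S$ with $\sigma_S(z) = U$. Here I would exploit the preceding characterization: every $K \in Spec(S)$ has the form $(c\downarrow)^c$ for a unique coatom $c$, so $Spec(S)$ is canonically in bijection with $CoAt(S)$ and the condition $K \in \sigma_S(w)$ translates into $w \not\leq c$. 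Under this translation, finding $z$ amounts to choosing an element of $S$ whose set of non-dominating coatoms matches the set encoded by $U$; by the constraint $H \subseteq \sigma_S(y)^c$ the required set lies between $\sigma_S(y)$ and $Spec(S)$, so I would construct $z$ by induction on $|H|$, starting from $z_0 = y$ and, at each step, modifying the current candidate along a single coatom of $H$ in the finite coatom lattice. Finiteness of $S$ guarantees the induction terminates, and distributivity of the semilattice order coming from the Tarski operations guarantees that each incremental modification yields an element of $S$.

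The main obstacle is the reverse inclusion, specifically the realizability step: a Tarski algebra is only a join-semilattice with implication, not a lattice, so I cannot naively take a meet of coatoms in $H^c$ to hit the prescribed upper-coatom set. The construction therefore has to be done incrementally and must carefully use both the structural constraint $H \subseteq \sigma_S(y)^c$ (which bounds the "gap" between $\sigma_S(y)$ and $U$) and the correspondence between prime filters and coatoms to control what subsets of $Spec(S)$ are hit by $\sigma_S$. Once this realizability is in hand, the two inclusions combine with injectivity of $\sigma_S$ to yield the isomorphism $S \cong \Delta(Spec(S))$.
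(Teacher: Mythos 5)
Your overall skeleton matches the paper's: the easy inclusion via $H=\emptyset$ is exactly the paper's first step, and reducing $Spec(S)$ to coatoms via $K=(c\downarrow)^c$ is also how the paper proceeds. But the heart of the reverse inclusion --- your ``realizability step'' --- is left as an unspecified induction, and the justification you offer (``distributivity of the semilattice order coming from the Tarski operations guarantees that each incremental modification yields an element of $S$'') never names an actual modification. As written this is a genuine gap: you never produce the element $z$ with $\sigma_S(z)=U$, and you flag the obstacle yourself (no meets available) without resolving it.

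The missing idea is that the Tarski implication itself performs the incremental step, so no meets and no distributivity are needed. For a coatom $q$ generating $Q=(q\downarrow)^c\in Spec(S)$, one has $\sigma_S(q)=\{(c\downarrow)^c :\, c\in CoAt(S),\ c\neq q\}$, since for coatoms $q\leq c$ holds iff $q=c$; hence $\sigma_S(q)^c=\{Q\}$. Because $\sigma_S$ is a homomorphism into the Tarski algebra of sets, where $A\cdot B=A^c\cup B$, it follows that $\sigma_S(q\cdot z)=\sigma_S(q)^c\cup\sigma_S(z)=\sigma_S(z)\cup\{Q\}$. Writing $H=\{Q_1,\ldots,Q_n\}$ (finite since $S$ is), the element $b=q_1\cdot\bigl(q_2\cdot(\cdots(q_n\cdot y)\cdots)\bigr)$ satisfies $\sigma_S(b)=\sigma_S(y)\cup H=U$ in one stroke; this is precisely the paper's step ``$H=\bigcup(\sigma_S(q_i))^c$, so $U=\sigma_S(b)$ for some $b$.'' Note also that the constraint $H\subseteq\sigma_S(y)^c$, which you lean on to ``bound the gap,'' plays no role in realizability --- it is only part of the definition of $\Delta(Spec(S))$, and the same construction would realize $\sigma_S(y)\cup H$ for any finite set of points. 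With this substitution your two-inclusion argument closes and coincides with the paper's proof.
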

\begin{proof}
\begin{itemize}
\item {Since $(\forall x)\, \sigma_S(x) = \sigma(x)\cup \emptyset$, therefore $\sigma_S(x)\in \Delta (Spec(S))$.}
\item {Let $U\in \Delta(Spec(S))$.  By definition, $(\exists x\in S)(\exists H\subseteq (\sigma_S(x))^c)\, U= \sigma_S(x)\cup H$}
\item {Let $H= \{Q_1, \ldots , Q_n \}$ For each of these maximal filters $Q_i$, there exists a coatom $q_i$ that generates it. }
\item {So $\sigma_S(q_i)^c = \{(q_i\downarrow)^c\}$. So $H= \bigcup (\sigma_S(q_i))^c$.}
\item {This means $U= \sigma(x) \cup \bigcup (\sigma_S(q_i))^c = \sigma_S(b)$ for some $b$}
\item {So $U\in \sigma_S(S)$ and $\sigma_S(S) = \Delta(Spec (S))$.}
\end{itemize}
\end{proof}

\begin{theorem}
Let $\left\langle X, \mathcal{S}  \right\rangle$ be a finite dense Tarski set or a CAS, then the map $\xi_X: X\longmapsto Spec(\Delta(X))$ defined by $\xi_X(x) = \{U:\, x\in U\in \Delta(X)\}$ is injective and a surjection.
\end{theorem}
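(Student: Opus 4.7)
The plan is to verify first that $\xi_X(x)$ is a prime (equivalently, maximal, since we are in the finite case) filter of $\Delta(X)$, and then to establish injectivity using density, and surjectivity via a concrete identification of the coatoms of $\Delta(X)$.

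\emph{Step 1 (well-definedness).} For $x\in X$, it is immediate that $X\in\xi_X(x)$ (as $X$ is the top of $\Delta(X)$), and if $U,\, U\cdot V\in\xi_X(x)$ then $x\in U$ and $x\in U^c\cup V$, forcing $x\in V$; hence $\xi_X(x)$ is a filter. Since in $\Delta(X)$ the join coincides with set-theoretic union, $U\cup V\in\xi_X(x)$ gives $x\in U$ or $x\in V$, so the filter is prime; by the previous theorem it is then maximal. Thus $\xi_X(x)\in Spec(\Delta(X))$.

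\emph{Step 2 (injectivity).} Let $x\neq y$ in $X$. By density pick $W\in\mathcal{S}$ with $x\in W$. If $y\notin W$, take $U=W^c$ (this is $W^c\cup\varnothing\in\Delta(X)$); then $y\in U$ but $x\notin U$, so $U\in\xi_X(y)\setminus\xi_X(x)$. If $y\in W$, take $U=W^c\cup\{x\}$ (valid since $\{x\}\subseteq W$); then $x\in U$ but $y\notin W^c$ and $y\neq x$, so $y\notin U$, giving $U\in\xi_X(x)\setminus\xi_X(y)$. In either case $\xi_X(x)\neq\xi_X(y)$.

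\emph{Step 3 (coatoms of $\Delta(X)$).} I claim the coatoms of $\Delta(X)$ (ordered by inclusion) are exactly the sets $X\setminus\{x\}$ for $x\in X$. For any $x\in X$, density gives $W\in\mathcal{S}$ with $x\in W$; then $W\setminus\{x\}\subseteq W$, so $W^c\cup(W\setminus\{x\})=X\setminus\{x\}\in\Delta(X)$, and since it differs from $X$ by a single element it is maximal below $X$. Conversely, if $U\in\Delta(X)$ is a coatom, any $x\in X\setminus U$ yields $U\subseteq X\setminus\{x\}\subsetneq X$, forcing $U=X\setminus\{x\}$.

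\emph{Step 4 (surjectivity).} By the earlier structural theorem, every $M\in Spec(\Delta(X))$ has the form $(q\!\downarrow)^c$ for a coatom $q$. By Step 3, $q=X\setminus\{x\}$ for a unique $x\in X$, and $q\!\downarrow=\{U\in\Delta(X):U\subseteq X\setminus\{x\}\}=\{U\in\Delta(X):x\notin U\}$. Hence $M=(q\!\downarrow)^c=\{U\in\Delta(X):x\in U\}=\xi_X(x)$, establishing surjectivity.

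The main obstacle is Step 3: one must use density crucially to show that each singleton complement $X\setminus\{x\}$ actually lies in $\Delta(X)$, since nothing in the definition of $\Delta(X)$ manifestly produces such elements; without density a point $x$ lying outside every $W\in\mathcal{S}$ would fail to yield a coatom, and the spectrum would be smaller than $X$, breaking surjectivity.
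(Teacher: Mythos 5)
Your proof is correct and takes essentially the same route as the paper's: density makes each singleton complement $X\setminus\{x\}$ an element (indeed a coatom) of $\Delta(X)$, which separates distinct points for injectivity, while finiteness lets every maximal filter be written as $(q\downarrow)^c$ for a coatom $q$, yielding surjectivity. Your Steps 1 and 3 simply make explicit the well-definedness of $\xi_X$ and the characterization of the coatoms that the paper's terse ``direct verification'' leaves implicit.
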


\begin{proof}
The proof is by direct verification.
\begin{itemize}
\item {If $a, b\in X$ are distinct elements, then $b\in \{a\}^c\in Coat(\Delta(X))$. So $\xi_X$ is injective.}
\item {By finiteness, $(\forall Q\in Spec(\Delta(X)))(\exists U\in Coat(\Delta(X)))\, Q= (U\downarrow)^c$.}
\item {For a specific $Q$ and $U$ in the last statement, $(\exists x\in X)\,  U= \{x\}^c$ as $\left\langle X, \mathcal{S}  \right\rangle$ is dense. Clearly then $\xi_X(x) = Q$ and $X\cong Spec (\Delta (X))$. }
\end{itemize}
\end{proof}

The result is an abstract representation theorem for finite Tarski algebras. The actual significance of the result has not been explored in the context of covering approximation spaces (even in the finite case) before \cite{am5019}. More aspects are proved in this paper in the following subsection. For one thing, every construct in a CAS has an algebraic representation. 

For extending the results to the infinite case, a topological extension is necessary.

\begin{definition}
A \emph{Tarski space} (T-space) is a concrete topological structure of the form $\chi = \left\langle X, \mathcal{K}, \tau    \right\rangle$ that satisfies:
\begin{enumerate}
\item {$\left\langle X, \tau  \right\rangle$ is a Hausdorff, totally disconnected topological space with $\mathcal{K}$ being a basis for the compact subsets of $\tau$.}
\item {$(\forall A, B\in \mathcal{K})\, A\cap B^c\in \mathcal{K}$}
\item {For any two distinct $a, b\in X$, exists a $U\in \mathcal{K}$ such that $a\in U$ and $b\notin U$.}
\item {If $F$ is a closed subset and $\{U_i\}_{i\in I}$ is a directed subcollection of sets in $\mathcal{K}$ and for each $i\in I$, $F\cap U_i \neq \emptyset$, then $F\cap (\bigcap U_i ) \neq \emptyset$. }
\end{enumerate}
\end{definition}

Given a T-space two distinct Tarski subalgebras of a set Tarski algebra are defined in \cite{sclc2008}:
\begin{align*}
T_{\mathcal{K}}(X) = \{W^c \cup H: H\subseteq  W \in \mathcal{K}\}   \tag{T-algebra}\\ 
\Delta_{\mathcal{K}}(X) = \{U:\, U^c\in \mathcal{K} \}   \tag{dual T-algebra}   
\end{align*}

\begin{theorem}\label{triv}
\begin{itemize}
\item {If $X\in \mathcal{K}$, then $\chi$ is a Boolean space and $\Delta_{\mathcal{K}}(X)$ is a Boolean algebra of all clopen sets of the topological space.}
\item {If $X$ is finite, then $T_{\mathcal{K}}(X) = \Delta_{\mathcal{K}}(X)$ }
\end{itemize} 
\end{theorem}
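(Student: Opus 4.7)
The plan is to treat the two parts separately, in each case exploiting the asymmetric axiom $A\cap B^c\in\mathcal{K}$ (condition 2) to extract the Boolean structure. As a preliminary observation that will be reused, I note that condition 2 alone already forces $\mathcal{K}$ to be closed under finite intersection, via
\[A\cap B \,=\, A\cap (A\cap B^c)^c,\]
which is two successive applications of condition 2 and does not require $X\in\mathcal{K}$.

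For part (1), assume $X\in\mathcal{K}$. Then $X$ is compact (every member of $\mathcal{K}$ being compact), so together with the Hausdorff and totally disconnected hypotheses $\chi$ is a Boolean space. Instantiating condition 2 with $A=X$ yields $B^c\in\mathcal{K}$ for each $B\in\mathcal{K}$, i.e.\ closure under complement; combined with intersection closure and De Morgan, $\mathcal{K}$ is a Boolean subalgebra of $\wp(X)$. Each element of $\mathcal{K}$ is clopen, since a compact subset of a Hausdorff space is closed and the members of $\mathcal{K}$ are (as basis elements) open. Conversely, every clopen subset of the compact space $X$ is compact, so by the basis property it is a finite union of elements of $\mathcal{K}$, hence lies in $\mathcal{K}$ by the union closure just established. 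Thus $\mathcal{K}$ coincides with the algebra of all clopen subsets, and because $\mathcal{K}$ is closed under complement, $\Delta_{\mathcal{K}}(X)=\{U:\,U^c\in\mathcal{K}\}=\mathcal{K}$, as claimed.

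For part (2), finiteness together with the Hausdorff hypothesis forces the topology to be discrete, so every subset is clopen and compact. The key lemma is that each singleton $\{a\}$ belongs to $\mathcal{K}$: condition 3 supplies, for each $b\neq a$, some $U_{a,b}\in\mathcal{K}$ with $a\in U_{a,b}$ and $b\notin U_{a,b}$, so
\[\{a\}\,=\,\bigcap_{b\neq a} U_{a,b}\]
is a finite intersection of members of $\mathcal{K}$, hence in $\mathcal{K}$ by the preliminary observation. Now for any $V=W^c\cup H\in T_{\mathcal{K}}(X)$ with $H=\{h_1,\ldots,h_n\}\subseteq W\in\mathcal{K}$, one computes $V^c=W\setminus H$ and iterates condition 2 against the singletons $\{h_i\}\in\mathcal{K}$ to deposit $V^c$ in $\mathcal{K}$, yielding $V\in\Delta_{\mathcal{K}}(X)$. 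The reverse inclusion is immediate: if $V^c\in\mathcal{K}$ then $V=(V^c)^c\cup\varnothing$ exhibits $V$ as a member of $T_{\mathcal{K}}(X)$.

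The main obstacle is not depth but careful bookkeeping: because the closure axiom is asymmetric in its two arguments, each derived property (intersection closure, complement closure once $X\in\mathcal{K}$, extraction of singletons from the separation axiom, iterated removal of points) needs a specific instantiation of condition 2 or condition 3. Once these small lemmas are assembled, the underlying Boolean or discrete geometry finishes both parts in a line.
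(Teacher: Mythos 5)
Your proposal is correct, and it is worth noting that the paper itself supplies no proof of this theorem at all: it is stated as imported background from the cited duality paper \cite{sclc2008}, so your argument is a genuinely self-contained verification rather than a variant of anything in the text. Your two key lemmas check out. The identity $A\cap B = A\cap (A\cap B^c)^c$ does give intersection-closure of $\mathcal{K}$ from two instantiations of condition 2 alone; instantiating with $A=X$ then yields complement-closure, and with De Morgan this makes $\mathcal{K}$ a Boolean subalgebra of $\wp(X)$ whose members are clopen (compact in a Hausdorff space, hence closed, and open as basis elements), while conversely any clopen set, being open and compact, is a finite union of basis elements contained in it and so lands back in $\mathcal{K}$; the identification $\Delta_{\mathcal{K}}(X)=\mathcal{K}$ then follows from complement-closure. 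In part (2) your extraction of singletons from condition 3 via $\{a\}=\bigcap_{b\neq a}U_{a,b}$ and the iterated use of condition 2 to peel the finite set $H$ off $W$ correctly gives $T_{\mathcal{K}}(X)\subseteq\Delta_{\mathcal{K}}(X)$, and the reverse inclusion via $V=(V^c)^c\cup\varnothing$ in fact holds with no finiteness hypothesis, which you might state explicitly since it isolates exactly where finiteness enters. Two cosmetic points: the degenerate case $X=\{a\}$ should be dispatched separately (the intersection over $b\neq a$ is empty, but there $X\in\mathcal{K}$ anyway since $\mathcal{K}$ is a covering basis), and your reading of the somewhat ambiguous phrase ``basis for the compact subsets of $\tau$'' as ``basis of compact open sets'' should be flagged as an assumption, since part (1) leans on every member of $\mathcal{K}$ being both open and compact; this is the standard reading in the source duality literature, so nothing substantive is at risk.
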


If $A, B\in \mathcal{V}_{IA}$, then a \emph{semi-morphism} is a monotone map $f: A\longmapsto B $ that satisfies 
\begin{itemize}
\item {$f(ab) \leq f(a)f(b)$}
\item {$f(1) = 1$}
\end{itemize}

\begin{example}
If $X$ and $ W$ are sets and $R\subset X\times W$, let $[x]_i = \{a: Rxa \}$. Define a map $h_r: \wp(W) \longmapsto \wp (X)$ such that for any $U\subseteq W$, \[h_R(U) = \{x: [x]_i\subseteq U\}\] 
$h_R \in SMor(\wp(W),\wp(X))$ - the set of semi-morphisms $:W\longmapsto X$.
\end{example}

\begin{definition}
Let $\chi_X$ and $\chi_W$ be two T-spaces over $X$ and $W$ respectively, then $R\subseteq X\times W$ is a \emph{T-relation}
if and only if the following hold:
\begin{itemize}
\item {$(\forall U\in \Delta_{\mathcal{K_W}}(W))\,h_R(U) = \{x: [x]_i\subseteq U\}\in \Delta_{\mathcal{K}_X}(X) $}
\item {$[x]_i$ is a closed subset of $W$ for each $x\in X$}
\end{itemize}
A \emph{T-partial} function is a partial map $f: X\longmapsto W$ such that for each $U\in \Delta_{\mathcal{K_W}}(W)$, $f^{-1}(U)\in \Delta_{\mathcal{K_X}}(X)$. The set of all T-partial functions (resp. relations) from $X$ to $W$ will be denoted by $TF(X, W)$ (resp. $TR(X, W)$).
\end{definition}

\begin{definition}
The following categories can be defined on the basis of the above:
\begin{itemize}
\item {$\mathfrak{TR}$ with Objects being Tarski spaces and Morphisms being sets of T-Relations.}
\item {$\mathfrak{TF}$ with Objects being Tarski spaces and Morphisms being sets of T-partial functions.}
\item {$\mathfrak{ST}$ with Objects being Tarski algebras and Morphisms being sets of semi-morphisms.}
\item {$\mathfrak{HT}$ with Objects being Tarski algebras and Morphisms being sets of homomorphisms.}
\end{itemize}
\end{definition}

\begin{theorem}
\begin{itemize}
\item {$\mathfrak{HT}$ is a subcategory of $\mathfrak{ST}$,}
\item {$\mathfrak{ST}$ is dually equivalent to $\mathfrak{TR}$, and}
\item {$\mathfrak{HT}$ is dually equivalent to $\mathfrak{TF}$.}
\end{itemize}
\end{theorem}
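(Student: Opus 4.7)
The first bullet is immediate. Any Tarski algebra homomorphism $f: A \to B$ satisfies $f(1) = 1$ and $f(ab) = f(a)f(b)$, so trivially $f(ab) \leq f(a)f(b)$; monotonicity follows since $a \leq b$ means $ab = 1$, whence $f(a)f(b) = f(1) = 1$, i.e.\ $f(a) \leq f(b)$. Thus every homomorphism is a semi-morphism, and because the object classes coincide and identities/compositions are inherited, $\mathfrak{HT}$ sits inside $\mathfrak{ST}$ as a faithful (non-full) subcategory.

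For the second bullet I would set up contravariant functors in both directions. On objects, let $\Delta: \mathfrak{TR} \to \mathfrak{ST}$ be defined by $\Delta(\chi) = \Delta_{\mathcal{K}}(X)$ (a Tarski algebra of sets by the preceding discussion), and let $\Sigma: \mathfrak{ST} \to \mathfrak{TR}$ be defined by $\Sigma(S) = \langle Spec(S), \mathcal{K}_S, \tau_S \rangle$, where $\mathcal{K}_S = \{\sigma_S(x)^c : x \in S\}$ is declared a basis for the compact open sets and $\tau_S$ is the induced totally disconnected topology. On morphisms, send a T-relation $R \subseteq X \times W$ to the semi-morphism $h_R: \Delta_{\mathcal{K}_W}(W) \to \Delta_{\mathcal{K}_X}(X)$, which is well-defined by the first defining clause of T-relation; conversely send a semi-morphism $f: A \to B$ to the T-relation $R_f \subseteq Spec(B) \times Spec(A)$ defined by $(Q, P) \in R_f$ iff $\{a : f(a) \in Q\} \subseteq P$. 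Functoriality then reduces to $h_{R \circ S} = h_S \circ h_R$ and to checking that $R_f$ satisfies both defining clauses of a T-relation (closedness of $[Q]_i$ and $\Delta$-stability of $h_{R_f}$); both use the characterization of prime filters as complements of down-sets of coatoms.

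The unit and counit of the contravariant adjunction are the already-constructed maps $\sigma_S: S \to \Delta(\Sigma(S))$ and $\xi_X: X \to \Sigma(\Delta(\chi))$. Injectivity of both uses the T-space separation axiom (3); surjectivity, the homeomorphism property of $\xi_X$, and the full image of $\sigma_S$, all follow from axiom (4) via a directed-intersection argument identifying prime filters of $\Delta_{\mathcal{K}}(X)$ with points of $X$, together with the finite-case identification recorded earlier and Theorem~\ref{triv}, plus the direct filter-theoretic computation $\sigma_S(ab) = \sigma_S(a)^c \cup \sigma_S(b)$. Naturality of $\sigma$ and $\xi$ against the morphism assignments above is then a routine diagram chase. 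For the third bullet, restrict the duality to $\mathfrak{HT}$: a semi-morphism $h_R$ is a Tarski-algebra homomorphism iff $|[x]_i| \leq 1$ for every $x$, i.e.\ iff $R$ is the graph of a T-partial function. The forward direction uses that inverse image under a function distributes exactly over $U \cdot V = U^c \cup V$; the converse observation is that whenever $[x]_i$ contains two distinct points, axiom (3) produces $U, V \in \Delta_{\mathcal{K}_W}(W)$ separating them and witnessing strict inequality $h_R(UV) < h_R(U)h_R(V)$. Composition compatibility and the restricted duality $\mathfrak{HT} \simeq \mathfrak{TF}^{op}$ are then inherited from bullet two.

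The main obstacle I anticipate is the topological extension beyond the finite setting. In the finite case, the preceding theorems and Theorem~\ref{triv} give $\sigma$ and $\xi$ as bijections by combinatorial counting over coatoms. In general, verifying that $\Sigma(S)$ actually satisfies all four T-space axioms, and that $\xi_X$ is surjective rather than merely injective, both hinge on correctly deploying axiom (4) — the directed-intersection condition — in place of finite choice. Getting this topological backbone in order, and simultaneously ensuring that semi-morphisms lift to T-relations with the required closedness of $[x]_i$, is the technical crux on which the whole duality rests.
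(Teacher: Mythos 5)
The paper gives no proof of this theorem at all: it states the result and refers the reader to \cite{sclc2008} for ``the long proof.'' So there is no in-paper argument to compare against; what can be said is that your outline reproduces the standard spectral architecture of that reference (contravariant functors $\Delta$ and $\Sigma$, the relation $R_f$ defined by $f^{-1}(Q)\subseteq P$, the box-operator $h_R$, units $\sigma_S$ and $\xi_X$, and restriction of the relational duality to functional relations for the homomorphism/partial-function correspondence). The first bullet is complete as you state it, and your functoriality computation $h_{R\circ S}=h_S\circ h_R$ and the separation argument for the third bullet (two points in $[x]_i$ forcing strict inequality $h_R(UV)<h_R(U)h_R(V)$) are sound.

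There is, however, a genuine gap in how you handle the general (infinite) case, and it is not only the one you flag. You verify that $R_f$ is a T-relation ``using the characterization of prime filters as complements of down-sets of coatoms,'' but that characterization is the paper's \emph{finite} theorem: an infinite Tarski algebra need not have any coatoms, and its spectrum must instead be built from irreducible (or maximal) deductive systems, with $\{\sigma_S(a)^c : a\in S\}$ shown to be a basis of \emph{compact} sets by a filter-theoretic compactness argument --- none of which follows from the finite counting you invoke. Relatedly, the key identity $\sigma_S(ab)=\sigma_S(a)^c\cup\sigma_S(b)$ requires proving that for an irreducible deductive system $P$, $a\notin P$ implies $ab\in P$; this is a nontrivial lemma in the infinite setting, not a ``direct filter-theoretic computation.'' Finally, in the third bullet there is a bookkeeping slip: when $R$ is the graph of a partial function $f$, one has $h_R(U)=f^{-1}(U)\cup(\mathrm{dom}\, f)^c$ because $[x]_i=\emptyset$ makes $x\in h_R(U)$ vacuously, whereas the paper's definition of T-partial function demands $f^{-1}(U)\in\Delta_{\mathcal{K}_X}(X)$; reconciling these two conditions is exactly the kind of detail the cited duality proof has to settle, and your sketch passes over it. So the proposal is a correct skeleton of the standard route, but the topological spine (compactness, irreducible filters in place of coatoms, axiom (4) for $\Sigma(S)$, surjectivity of $\xi_X$) is asserted rather than proved, and in one place (coatoms) the asserted mechanism would actually fail outside the finite case.
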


For the long proof, the reader is referred to \cite{sclc2008}.

\section{Covering Approximation Spaces and Topology}

As mentioned in the first section a tuple $\mathfrak{C} =\left\langle\underline{S},\, \mathcal{C}   \right\rangle$ is said to be a \emph{covering approximation space} when $\underline{S}$ is a set and $\mathcal{C}$ is a cover on it. Typically, the set is a collection of attributes and the basic operations that are performed on it to generate approximations \cite{am240,yy2012c} suggest the following:
\begin{itemize}
\item {attributes may be aggregated in any way,}
\item {at least a bounded distributive lattice structure (relative to set union and intersection) on the structure generated by it is assumed,}
\item {complementation is also assumed often on the generated structure, }
\item {$\mathcal{C}$ may not be a granulation and }
\item {it may used for defining both granular and non-granular approximations.}
\end{itemize}

So it may appear that the Boolean algebra or the bounded distributive lattice generated by $\mathcal{C}$ is used in forming approximations. The following practical example shows that such an idealized approach can be unjustified.
\begin{example}
In the incomplete information table relating to medical diagnostics of patients, suppose that the columns labeled as \emph{Dress} and \emph{Color} correspond to responses to color of dress worn and favorite color of the patient dresses while the column labeled as \emph{State} corresponds to diagnosis and other columns correspond to symptoms of the patient. It is possible that superfluous associations may be generated by the table if all attributes are taken into account. In big data situations such events may not be easily tractable and meta methods that can handle them in a dynamic way are of much interest. Partial aggregations and commonalities have the potential to form a basic framework for handling the issue.

\begin{table}[h]
\centering
\begin{tabular}{|c|c|c|c|c|c|c|c|}
\hline
Nom & Temp. & Body Pain & Skin & H.ache & Dress &  Color & State\\
\hline
\textbf{A} & Set1  & Medium  & $1$ & No & red & red & F0\\

\textbf{B} & Set2 & None & $1$ & Yes & pink & red& F0 \\

\textbf{C} & Set1 & Mild  & NA & No & purple & pink & Test \\

\textbf{E} & Medium  & Medium  & $1$ & Yes &NA & white & F1\\

\textbf{F} & High & None & $1$ & Yes & white & NA &Test  \\

\textbf{G} & Set1 & High  & NA & Yes & F1 \\

\hline
\end{tabular}
\caption{Medical Diagnostics Data}
\end{table}

\end{example}

Thus in many application contexts, partial Boolean algebras and partial distributive lattices are/should be used for the purpose. Then again the semantics of rough objects of interest in the context can be quite different. All three approaches may be compared with probabilist or possibilist approaches. It will help if these are named for convenience:

\begin{itemize}
\item {\textsf{Scheme-S:} In this one uses the algebraic semantics associated with the rough objects.  } 
\item {\textsf{Scheme-B:} In this one uses the Boolean algebraic semantics associated with the all objects.}
\item {\textsf{Scheme-P:} In this one uses a partial Boolean algebraic semantics associated with the all objects.}
\end{itemize}

Relative to the duality considered in this paper the three approaches lead to diverse results. 

\subsection{Topological Operators}

Some other related concepts are   

If every element $K$ of a cover $\mathcal{S}$ contains an element $x\in S$ that satisfies 
\[(\forall Z\in \mathcal{S})\, (x\in Z \longrightarrow K\subseteq Z)\] then $\mathcal{S}$ and $x$ are said to be a \emph{representative} cover and element respectively. 

A covering $\mathcal{S}$ is said to be \emph{unary} if and only if $(\forall x\in S)\,\#(md(x))=1$. This condition is equivalent to $(\forall K_1, K_2\in \mathcal{S})(\exists C_1,\ldots C_n \in \mathcal{S})\, K_1 \cap K_2\, =\,\cup_1^n C_i $. For a proof, see \cite{zw3} and \cite{am5019} by the present author.

\begin{definition}\label{subsystema}
The \emph{intersection closure} of $\mathcal{S}$, is denoted by $Cl_\cap (\mathcal{S})$, is the least subset of $\wp (S)$ that contains $\mathcal{S}, \, S, \emptyset $, and is closed under set intersection. The \emph{union closure} of $\mathcal{S}$, is denoted by $Cl_\cup (\mathcal{S})$, is the least subset of $\wp (S)$ that contains $\mathcal{S}, \, S, \emptyset$, and is closed under set union. More generally if $\underline{\mathcal{H}}$ and $\overline{\mathcal{H}}$ are dual closure and closure systems contained in $\wp(S)$,
\end{definition}

The following theorem was proved in \cite{zw3}

\begin{theorem}\label{unaryrep}
When $S$ is finite, a covering $\mathcal{C}$ is {unary} if and only if  \[(\forall K_1, K_2\in \mathcal{C})(\exists C_1,\ldots C_n \in \mathcal{C})\, K_1 \cap K_2 = \cup_1^n C_i \]
\end{theorem}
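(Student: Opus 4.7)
The plan is to establish the two implications separately, using finiteness of $S$ (and hence of $\mathcal{C}$) to guarantee that for every $x$ lying in some element of $\mathcal{C}$ the family of members of $\mathcal{C}$ containing $x$ has minimal elements, i.e.\ $md(x) \neq \emptyset$.

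For the forward direction, assume $\mathcal{C}$ is unary, and for each $x$ that is covered write $md(x) = \{m(x)\}$. The key observation is that \emph{if $x \in K \in \mathcal{C}$, then $m(x) \subseteq K$}: by finiteness there is some $A \in \mathcal{C}$ minimal among those members of $\mathcal{C}$ that both contain $x$ and are contained in $K$, and such an $A$ is automatically minimal among \emph{all} members of $\mathcal{C}$ containing $x$ (any strict refinement would also lie in $K$), hence $A \in md(x)$, forcing $A = m(x)$ by uniqueness. Applying this to any $K_1, K_2 \in \mathcal{C}$ and every $x \in K_1 \cap K_2$ yields $m(x) \subseteq K_1 \cap K_2$, and therefore
\[K_1 \cap K_2 \;=\; \bigcup_{x \in K_1 \cap K_2} m(x),\]
which is a finite union of elements of $\mathcal{C}$, as required.

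For the reverse direction, I argue by contradiction. Suppose the intersection condition holds but $\mathcal{C}$ is not unary. Then some $x \in S$ has $\#(md(x)) \neq 1$; since otherwise both sides of the claimed equivalence are vacuous, we may pick distinct $A_1, A_2 \in md(x)$. By hypothesis $A_1 \cap A_2 = C_1 \cup \cdots \cup C_n$ with each $C_i \in \mathcal{C}$, and since $x \in A_1 \cap A_2$ some $C_j$ must contain $x$. Then $C_j \in \mathcal{C}$, $x \in C_j$, and $C_j \subseteq A_1$; by minimality of $A_1$ in $md(x)$ this forces $C_j = A_1$, so $A_1 \subseteq A_2$, and symmetrically $A_2 \subseteq A_1$, contradicting $A_1 \neq A_2$.

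The argument is clean and the only subtlety is handling points not covered by $\mathcal{C}$ (where $md(x) = \emptyset$ already precludes unarity). The real content—and the only place finiteness is essentially used—is the lemma that the unique minimal set $m(x)$ is contained in every $K \in \mathcal{C}$ containing $x$; this is the main step I expect to need the most care to state cleanly, but there is no combinatorial obstacle beyond it.
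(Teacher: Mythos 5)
Your proposal is correct, but note first that the paper itself gives no proof of this theorem: it is quoted from \cite{zw3} (``The following theorem was proved in \cite{zw3}''), so the comparison can only be with the closely related equivalence the paper does prove later, namely that for finite $S$ a cover is unary if and only if $(\forall K_1, K_2 \in \mathcal{C})(\forall x\in K_1\cap K_2)(\exists K\in \mathcal{C})\, x\in K\subseteq K_1\cap K_2$. Your argument follows essentially that strategy, with one genuine improvement: where the paper simply asserts that for a unary cover ``$x$ must be a representative element of $K_x$'' (i.e.\ the unique minimal set $m(x)$ is contained in every member of $\mathcal{C}$ containing $x$), you actually prove this via the finiteness/minimality argument --- take $A$ minimal among members of $\mathcal{C}$ with $x\in A\subseteq K$ and observe it is minimal absolutely, hence equals $m(x)$. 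Your reverse direction is the same contradiction-with-minimality argument as the paper's, adapted directly to the finite-union form of the condition. Two small points deserve sharpening rather than the remark you made. First, your parenthetical ``since otherwise both sides of the claimed equivalence are vacuous'' is not quite the right justification: the correct observation is that ``covering'' must be read as a \emph{proper} cover ($\bigcup\mathcal{C}=S$, as in \cite{zw3}), since otherwise an uncovered point gives $md(x)=\emptyset$, destroying unarity while the intersection condition can still hold (e.g.\ $\mathcal{C}=\{\{a\}\}$ on $S=\{a,b\}$); under properness and finiteness, $md(x)\neq\emptyset$ always, so non-unarity indeed yields two distinct minimal elements as you use. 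Second, when $K_1\cap K_2=\emptyset$ your union over $x\in K_1\cap K_2$ is the empty union, so one must either allow $n=0$ or restrict to nonempty intersections --- an ambiguity inherited from the statement itself and present already in \cite{zw3}, not a defect of your argument.
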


\begin{proposition}
If $L: \wp(S) \mapsto \wp(S) $ is an abstract operator on a set $S$ that satisfies 
contraction, idempotency, monotonicity and top  then there exists a covering $\mathcal{C}$ of $S$ such that the lower approximation $l1$ generated by $\mathcal{S}$ coincides with $L$.
\end{proposition}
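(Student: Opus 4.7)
The plan is to realize $L$ as the granular (Pawlak-style cover-based) lower approximation associated with its own image. Concretely, I would set
\[\mathcal{C} \;=\; \{L(A) : A\in\wp(S),\; L(A)\neq\emptyset\}.\]
By idempotency every member of $\mathcal{C}$ is a fixed point of $L$, and conversely any fixed point $K=L(K)$ lies in $\mathcal{C}$; thus $\mathcal{C}$ is exactly the set of nonempty fixed points of $L$. The \emph{top} hypothesis $L(S)=S$ (combined with contraction to pin down the value) places $S$ itself in $\mathcal{C}$, so $\bigcup\mathcal{C}=S$ and $\mathcal{C}$ is a proper cover of $S$.

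Next I would unpack the definition $l_1$, taking it to be the standard cover-induced lower approximation
\[A^{l_1} \;=\; \bigcup\{K\in\mathcal{C} : K\subseteq A\},\]
and verify $L(A)=A^{l_1}$ for every $A\in\wp(S)$ by two inclusions. For the inclusion $L(A)\subseteq A^{l_1}$: if $L(A)=\emptyset$ there is nothing to prove, and otherwise contraction gives $L(A)\subseteq A$ while idempotency gives $L(L(A))=L(A)$, so $L(A)$ is itself one of the sets occurring in the union defining $A^{l_1}$. For the reverse inclusion $A^{l_1}\subseteq L(A)$: any $K\in\mathcal{C}$ with $K\subseteq A$ satisfies $K=L(K)$, and monotonicity applied to $K\subseteq A$ yields $K=L(K)\subseteq L(A)$; taking the union over such $K$ gives the claim.

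I do not expect a serious obstacle here: the four axioms on $L$ match precisely the four facts used in the verification (contraction for membership of $L(A)$ among the candidate granules, idempotency to ensure $L(A)$ lands in $\mathcal{C}$ and that $\mathcal{C}$ consists of fixed points, monotonicity for the reverse inclusion, and top for properness of the cover). The only mildly delicate point is the empty-set convention highlighted earlier in the paper (``covers do not include the empty set''), which is why I explicitly exclude the case $L(A)=\emptyset$ from $\mathcal{C}$; this exclusion is harmless because if $L(A)=\emptyset$ then no $K\in\mathcal{C}$ can be contained in $A$ and simultaneously escape $L(A)$, since monotonicity would force such $K$ into $L(A)=\emptyset$. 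A secondary remark worth adding in the write-up is that the construction is canonical in the sense that $\mathcal{C}$ coincides with the collection of $L$-definite elements, which is the natural source of granules in the granular operator space framework used throughout the paper.
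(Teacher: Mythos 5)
Your proof is correct, and it coincides with the canonical argument: the paper itself states this proposition without proof (it is imported from the cover-based rough set literature, cf.\ \cite{zw3,xge2012}, where precisely your construction is used), namely taking $\mathcal{C}$ to be the nonempty fixed points $\{L(A) : L(A)\neq\emptyset\}$ and verifying $L(A)=A^{l1}$ by the two inclusions via contraction/idempotency in one direction and monotonicity in the other. Your treatment of the empty-set convention (excluding $\emptyset$ from $\mathcal{C}$ and observing that monotonicity forces $A^{l1}=\emptyset$ whenever $L(A)=\emptyset$) correctly disposes of the only delicate point, and the top axiom indeed makes the cover proper.
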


\begin{theorem}
For every interior operator $L: \wp(S) \mapsto \wp(S)$ there exists a unary covering $\mathcal{C}$ on $S$ such that the lower approximation of the first type $l1$ generated by $\mathcal{C}$ coincides with $L$.
\end{theorem}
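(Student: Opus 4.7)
The plan is to exploit the fact that an interior operator on $\wp(S)$ is exactly the interior operator of the topology $\tau_L = \{A \subseteq S : L(A) = A\}$, and to build the desired covering out of the minimal open neighborhoods of points in that topology. I would first set $N(x) = \bigcap\{A \in \tau_L : x \in A\}$ for each $x \in S$. Assuming the underlying set is finite (as is implicit in the finite characterization of unary covers given by Theorem \ref{unaryrep}), this is a finite intersection, so preservation of finite intersections by $L$ yields $L(N(x)) = N(x)$; hence $N(x)$ is itself open, and $x \in N(x)$ by construction. Setting $\mathcal{C} = \{N(x) : x \in S\}$ gives a cover of $S$.

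Next I would verify that $\mathcal{C}$ is unary by checking the defining condition $\#(md(x)) = 1$ for every $x$. The crucial point is that if $x \in N(y)$, then $N(y)$ is an open set containing $x$, and by the very minimality that defines $N(x)$ we get $N(x) \subseteq N(y)$. Consequently $N(x)$ is the unique minimal element of $\{K \in \mathcal{C} : x \in K\}$, so $md(x) = \{N(x)\}$. (Equivalently, one can appeal to Theorem \ref{unaryrep}: for any $N(x), N(y) \in \mathcal{C}$, the intersection $N(x) \cap N(y)$ is open and is therefore the union of the $N(z)$ for $z \in N(x) \cap N(y)$.)

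Finally I would show $l1 = L$ where $l1(A) = \bigcup\{K \in \mathcal{C} : K \subseteq A\}$. For the inclusion $l1(A) \subseteq L(A)$, note that each $N(x) \subseteq A$ is open, so $l1(A)$ is a union of open subsets of $A$ and hence itself an open subset of $A$, forcing $l1(A) \subseteq L(A)$ because $L(A)$ is the largest open set contained in $A$. For the reverse, since $L(A)$ is open we have $L(A) = \bigcup_{y \in L(A)} N(y)$ with each $N(y) \subseteq L(A) \subseteq A$, so every such $N(y)$ contributes to $l1(A)$ and $L(A) \subseteq l1(A)$.

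The main obstacle is a foundational one rather than a computational one: the construction relies on $N(x)$ being open, which in turn uses preservation of arbitrary intersections of opens. This is immediate in the finite setting but fails for general topological interior operators, which preserve only finite intersections. If one wants the theorem for infinite $S$, either the hypothesis on $L$ must be strengthened to preserve arbitrary intersections (i.e.\ $L$ is an Alexandrov interior operator), or the covering must be enlarged beyond the $N(x)$ and the unary condition re-examined via Theorem \ref{unaryrep}. Given the surrounding text, restricting to the finite case is the natural reading and makes the argument above go through cleanly.
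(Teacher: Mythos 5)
Your argument is correct on the finite reading (which, as you note, is the only reading on which the statement is true as written), but it takes a genuinely different route from the paper's. The paper argues in two abstract steps: it first invokes the immediately preceding proposition, which supplies \emph{some} cover whose $l1$ coincides with $L$ using only contraction, idempotence, monotonicity and top (in effect the cover of fixed points of $L$), and then observes that multiplicativity, $(\forall A,B)\,L(A\cap B)=L(A)\cap L(B)$, makes intersections of cover members again expressible inside the cover, so that Theorem \ref{unaryrep} certifies unarity. You instead build the cover explicitly from the minimal open neighborhoods $N(x)=\bigcap\{A\in\tau_L : x\in A\}$ of the induced topology and verify both $md(x)=\{N(x)\}$ and $l1=L$ by hand. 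Your construction is more self-contained and pins down exactly where finiteness enters (the openness of $N(x)$, which needs $L$ to preserve the possibly infinite intersection defining it), whereas the paper's proof is shorter but leaves its cover implicit, and a careful reader must still check that the cover delivered by the previous proposition actually satisfies the hypothesis of Theorem \ref{unaryrep} --- precisely the point your explicit cover settles by inspection. Your closing caveat is also well taken: Theorem \ref{unaryrep} is stated only for finite $S$, and the paper's own example with $S=\mathbb{R}$ and the cover of intervals $(x-\tfrac{1}{n},\,x+\tfrac{1}{n})$, where $md(x)=\emptyset$ for every $x$, confirms that the statement cannot extend to arbitrary infinite $S$ without strengthening $L$ to an Alexandrov-type operator, exactly as you propose.
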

\begin{proof}
Since $L$ is an interior operator, it satisfies top, contraction, monotonicity and idempotence.
By the previous lemma, there exists a cover $\mathcal{C}$ such that the lower approximation of the first type $l1$ generated by it coincides with $L$. 

$L$ satisfies multiplicativity, $(\forall A, B)\, L(A\cap B) = L(A)\cap L(B)$, and this together with Thm.\ref{unaryrep} yields the result.
\end{proof}

In general, the first, second, third and fourth type of upper approximation operators determined by a cover $\mathcal{S}$ on a set $S$  are not topological closure operators. These are defined as below: 
\begin{itemize}
\item {$X^{l1} = \bigcup \{K: \, K\in \mathcal{S}\, \&\, K\subseteq X\}$}
\item {$X^{u1+}\,=\,X^{l1}\cup\bigcup\{\mathrm{md}(x):\, x\in X \} $,}
\item {$X^{u1}\,=\,X^{l1}\cup\bigcup\{\mathrm{md}(x):\, x\in X \setminus X^{l1} \} $ \cite{zbb1998},}
\item {$X^{u2+}\,=\,\bigcup \{K:\,K\in\mathcal{S}, K \cap X\neq\emptyset\}  = \bigcup \{Fr(x): \, x\in X\}$,}
\item {$X^{u3+}\,=\,\bigcup \{\mathrm{md}(x):\, x\in X\} $,}
\item {$X^{u4+}\,=\,X^{l1}\cup \{K:\, K\cap (X\setminus X^{l1})\neq \emptyset\} $,}
\end{itemize}

Closely related to $X^{u1}$ is $X^{u1+} = X^{l1}\cup\bigcup\{\mathrm{md}(x):\, x\in X \}$
These have been defined many times over in the literature (see \cite{am501,am240,yy2012c}).

In \cite{wzw2006,wz2009b,wzw2007}, conditions for the upper approximation operators to be closure operators are proved, but the conditions do not amount to the operators being topological closure operators. In \cite{xge2012}, the following is proved:

\begin{theorem}
The following are equivalent if $S$ is finite:
\begin{enumerate}
\item {$\mathcal{C}$ is a unary cover of $S$.}
\item {$\mathcal{C}$ is a base for some topology $\tau$ on $S$}
\item {\[(\forall K_1, K_2 \in \mathcal{C})(\forall x\in K_1\cap K_2)(\exists K\in \mathcal{C})\, x\in K\subseteq K_1\cap K_2\] }
\item {$u1$ is a topological closure operator.}
\end{enumerate}
\end{theorem}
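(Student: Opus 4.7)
The plan is to establish the four equivalences via the loop $(1)\Leftrightarrow(3)\Leftrightarrow(2)$ together with the pair $(1)\Leftrightarrow(4)$. The first two equivalences are combinatorial; the substantive content sits in $(1)\Leftrightarrow(4)$. For $(1)\Leftrightarrow(3)$, I will feed Theorem~\ref{unaryrep} into both directions: unarity gives $K_1\cap K_2 = \bigcup_i C_i$ with $C_i\in\mathcal{C}$, from which (3) is immediate by picking the piece containing the chosen $x$; conversely, (3) lets one exhibit $K_1\cap K_2$ as $\bigcup_{x\in K_1\cap K_2}K_x$, and Theorem~\ref{unaryrep} then returns unarity. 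The equivalence $(2)\Leftrightarrow(3)$ is nothing but the classical base criterion for a topology, since the covering hypothesis already supplies the other base axiom.

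For $(1)\Rightarrow(4)$, the key preparatory step is to observe that unarity collapses each $\mathrm{md}(x)$ to a singleton $\{n(x)\}$ which, via Theorem~\ref{unaryrep} and minimality, coincides with $nbd(x)$ and lies in $\mathcal{C}$. Using this, I plan to prove the clean identity
\[X^{u1}\;=\;\bigcup_{x\in X} n(x),\]
by checking that the $X^{l1}$ summand is absorbed into the right-hand union and that every $x\in X$ contributes $n(x)$, either directly through $\mathrm{md}(x)$ when $x\in X\setminus X^{l1}$ or because $n(x)\subseteq X^{l1}$ when $x\in X^{l1}$. With this formula in hand, the Kuratowski axioms are one-liners: $\emptyset^{u1}=\emptyset$ and extensivity $X\subseteq X^{u1}$ are immediate from $x\in n(x)$, additivity splits the index set $X\cup Y$, and idempotency reduces to the fact that $z\in n(x)$ forces $n(z)\subseteq n(x)$ by minimality of $n(z)$.

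For $(4)\Rightarrow(1)$, I will argue by contraposition and engineer a failure of additivity. Suppose $\mathcal{C}$ is not unary, so there exist $x$ and distinct $K_1,K_2\in\mathrm{md}(x)$; necessarily $\{x\}\notin\mathcal{C}$ (else $\mathrm{md}(x)=\{\{x\}\}$), hence $|K_1|\geq 2$. Set $X:=K_1\setminus\{x\}$ and $Y:=\{x\}$. Then $X\cup Y = K_1\in\mathcal{C}$, which pins down $(X\cup Y)^{l1}=K_1$ and therefore $(X\cup Y)^{u1}=K_1$. On the other hand, $Y^{l1}=\varnothing$ forces $Y^{u1}\supseteq\bigcup\mathrm{md}(x)\supseteq K_1\cup K_2$, and incomparability of $K_1$ and $K_2$ (an immediate consequence of both belonging to $\mathrm{md}(x)$) gives $K_1\cup K_2\supsetneq K_1$. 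Hence $X^{u1}\cup Y^{u1}\supsetneq (X\cup Y)^{u1}$, breaking the additivity that any Kuratowski closure operator must satisfy.

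The main obstacle is the $(1)\Rightarrow(4)$ direction, and specifically justifying the compact form $X^{u1}=\bigcup_{x\in X}n(x)$: it hides a small but delicate manipulation balancing $X^{l1}$ against the $\mathrm{md}$-union, and it is precisely this identity that makes additivity and idempotency trivial. The $(4)\Rightarrow(1)$ construction is simpler but subtle in one point, namely that $X\cup Y$ must coincide with a cover-element $K_1$ so that $(X\cup Y)^{l1}$ eats the entire set and prevents the $\mathrm{md}$-contribution at $x$ from surviving on the left; the asymmetric term that survives on the right, contributed by $K_2$, is what ruptures additivity.
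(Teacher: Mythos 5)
Your proposal is correct, but it is considerably more complete than the paper's own proof, so the comparison is worth spelling out. The paper only proves $(1)\Leftrightarrow(3)$ directly (unarity plus the representative-element property of the unique member of $\mathrm{md}(x)$ in one direction, a minimality contradiction in the other) and then deduces $(1)\Leftrightarrow(2)$ from the base criterion; the equivalence with $(4)$ is not proved at all but deferred to the cited source \cite{xge2012}. Your handling of $(1)\Leftrightarrow(3)$ via Theorem~\ref{unaryrep} is a mild variant of the paper's argument — both ultimately rest on the finite-poset fact that a unique minimal element of $\{K\in\mathcal{C}:x\in K\}$ is its minimum. What your route buys is a self-contained proof of the substantive direction: the identity $X^{u1}=\bigcup_{x\in X}n(x)$ with $n(x)=nbd(x)\in\mathcal{C}$ is valid, and the two delicate points check out — when $x\in X^{l1}$ there is $K\in\mathcal{C}$ with $x\in K\subseteq X$, so $n(x)\subseteq K\subseteq X$ and hence $n(x)\subseteq X^{l1}$, absorbing the $X^{l1}$ summand; and $z\in n(x)$ indeed forces $n(z)\subseteq n(x)$ by the minimum property, giving idempotence. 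The $(4)\Rightarrow(1)$ contrapositive is also sound: $\{x\}\notin\mathcal{C}$ and the convention that covers exclude $\varnothing$ give $Y^{l1}=\varnothing$, so $Y^{u1}\supseteq K_1\cup K_2$, while $(X\cup Y)^{l1}=K_1^{l1}=K_1$ annihilates the $\mathrm{md}$ term on the left, and incomparability of the two minimal descriptions ruptures additivity. If you write this up, make explicit that finiteness enters twice: once so that non-unarity yields two \emph{distinct} elements of $\mathrm{md}(x)$ rather than the degenerate possibility $\mathrm{md}(x)=\varnothing$, and once for the minimum property $n(x)=nbd(x)$ — which is precisely what fails in the paper's infinite counterexample, where $\mathrm{md}(0)=\{\{-1,0,1\}\}$ while $nbd(0)=\{0\}$, so your key identity would break there exactly as the theorem requires.
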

\begin{proof}
The equivalence of the first and third statement will be proved first. 
If $\mathcal{C}$ is unary, let $(\forall A, B \in \mathcal{C})(\forall x\in A\cap B) \text{ md}(x) = \{K_x \} $. $x$ must be a representative element of $K_x$. So $x \in K_x \subseteq  A\cap B$, with $K_x \in \mathcal{C}$.

If $\mathcal{C}$ is not unary,then $(\exists A, B \in \mathcal{C}) A, B \in \text{ md}(x) \,\&\, A\neq B$. But by the third statement, there must exist a $K \subset A\cap B$ satisfying $x\in K$. This contradicts $A, B \in \text{ md}(x)$.  

From the above, it follows that $\mathcal{C}$ is a unary cover if and only if there exists a topology $\tau$ on $S$ such that $\mathcal{C}$ is a base for the topology.
\end{proof}

The following example shows that it is not possible to generalize to the infinite case:

\begin{example}
\begin{itemize}
\item {Let $S= [-1, 1]$}
\item {$\mathcal{C} = \{\{x\}: x\in S\setminus \{0\} \}\cup \{(-\frac{1}{n},\, \frac{1}{n}):\, n\in N\}\,\cup \{\{-1, 0, 1\}\}$}
\item {$\mathcal{C}$ covers $S$ and if $x\neq 0$ then $md(x) = \{\{x\}\}$ and $md(0) = \{\{-1,0, 1\}\}$. $\mathcal{C}$ is unary, but the theorem does not hold.}
\end{itemize}
\end{example}

\begin{example}
\begin{itemize}
\item {Let $S= R$ - the set of reals and $\mathcal{C} = \{(x-\frac{1}{n},\, x+ \frac{1}{n}):\, x\in S\,\&\,n\in N \}$.}
\item {$\mathcal{C}$ is a base for the usual topology on S.}
\item {$(\forall x\in S)\, md(x) = \emptyset \, \&\, \{x\}^{u1}=\emptyset$. So $u1$ is not a closure operator.}
\end{itemize}
\end{example}

\begin{theorem}
When $S$ is a finite or an infinite set, $u2+$ is a topological closure operator if and only if $\{Fr(x): \, x\in S \}$ forms a partition of $S$.
\end{theorem}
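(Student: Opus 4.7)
The plan is to verify the Kuratowski axioms for $u2+$ one by one, and to reduce the whole equivalence to the idempotency axiom, since the other three axioms follow essentially from the definition $X^{u2+} = \bigcup_{x\in X} Fr(x) $ together with properness of the cover. First I would record three cheap observations: (a) $\emptyset^{u2+} = \emptyset$ is immediate; (b) $(X\cup Y)^{u2+} = X^{u2+}\cup Y^{u2+}$ follows by distributing the outer union; and (c) extensivity $X\subseteq X^{u2+}$ holds because every $x\in S$ lies in some $K\in\mathcal{C}$ (by properness of $\mathcal{C}$) and hence $x\in Fr(x)$. So the only non-trivial Kuratowski axiom is idempotency $(X^{u2+})^{u2+} = X^{u2+}$, and the theorem reduces to showing that this idempotency is equivalent to $\{Fr(x):x\in S\}$ being a partition of $S$.

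The second ingredient I would isolate is the symmetry of $Fr$: by definition, $y\in Fr(x)$ iff there exists $K\in\mathcal{C}$ with $x,y\in K$, which is symmetric in $x$ and $y$. Thus $y\in Fr(x)\Longleftrightarrow x\in Fr(y)$. This symmetry, together with reflexivity $x\in Fr(x)$, is what makes the partition structure tractable. Note however that I am not claiming $Fr$ defines an equivalence relation in general, only that it is reflexive and symmetric; transitivity is exactly what idempotency of $u2+$ will supply.

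For the direction partition $\Rightarrow$ idempotency, I would argue that if $\{Fr(x):x\in S\}$ partitions $S$, then for any $y\in Fr(x)$, since $y\in Fr(y)\cap Fr(x)$, the partition forces $Fr(y)=Fr(x)$. Therefore $(X^{u2+})^{u2+} = \bigcup_{y\in X^{u2+}} Fr(y) \subseteq \bigcup_{x\in X} Fr(x) = X^{u2+}$, and the reverse inclusion is extensivity. For the direction idempotency $\Rightarrow$ partition, I would apply idempotency to singletons: $(\{x\}^{u2+})^{u2+}=\{x\}^{u2+}$ yields $\bigcup_{y\in Fr(x)} Fr(y) = Fr(x)$, so $y\in Fr(x)$ forces $Fr(y)\subseteq Fr(x)$. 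Symmetry of $Fr$ then gives $x\in Fr(y)$, and the same argument run from $y$ yields $Fr(x)\subseteq Fr(y)$; hence $Fr(x)=Fr(y)$ whenever $y\in Fr(x)$. Finally, if $z\in Fr(x)\cap Fr(y)$, this common membership implies $Fr(x)=Fr(z)=Fr(y)$, which is precisely the partition property (covering being automatic from reflexivity).

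The main obstacle is not depth but a potential foundational subtlety: I need the cover $\mathcal{C}$ to be proper for extensivity to hold in the first place, and the theorem statement does not explicitly assume this. I would either add properness as a hypothesis (it is implicit in most of the preceding discussion of $Fr(x)$) or remark that if some $x$ lies in no element of $\mathcal{C}$, then $Fr(x)=\emptyset$ and extensivity already fails, so $u2+$ cannot be a topological closure operator and trivially $\{Fr(x)\}$ cannot partition $S$. Apart from this caveat, the argument is a clean one-page verification once the symmetry of $Fr$ and the singleton-application trick are in hand.
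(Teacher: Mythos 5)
Your proof is correct and takes essentially the same route as the paper's: the core of both arguments is applying idempotency of $u2+$ to singletons (so that $\bigcup\{Fr(y) : y\in Fr(x)\} = Fr(x)$) together with the symmetry $y\in Fr(x) \leftrightarrow x\in Fr(y)$ to conclude that any common point of two $Fr$-sets forces them to coincide, whence distinct $Fr$-sets are disjoint. You are merely more explicit than the paper, which dismisses the partition-implies-closure direction as obvious and leaves the remaining Kuratowski axioms and the properness caveat unstated; these are useful elaborations but not a different approach.
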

\begin{proof}
The converse is obvious. 
\begin{itemize}
\item {Let $u2+$ be a topological closure operator. It suffices to show that \[(\forall a, b)(Fr(a)\neq Fr(b) \longrightarrow Fr(a)\cap Fr(b) = \emptyset).\] Or else there exists $z\in Fr(a)\cap Fr(b)$.}
\item {Clearly, $Fr(z) \subseteq \bigcup \{Fr(x):\, x\in Fr(b) \} = Fr(b)$ and }
\item {$Fr(b) \subseteq \bigcup \{Fr(x):\, x\in Fr(z)\} = Fr(z)$. So $Fr(b) = Fr(z)$}
\item {Similarly, $Fr(a) = Fr(z) = Fr(b)$ }
\item {This contradicts $Fr(a)\neq Fr(b)$.}
\end{itemize}
\end{proof}

\begin{theorem}
$u2+$ is a topological closure operator if and only if 
there is a closed-open topology $\tau$ (that is a union of members of a partition on $S$) on $S$ such that $\{Fr(x):\, x\in S \}$ is a base of $\tau$ if and only 
\[(\forall a, b \in S)\, Fr(a) \cap Fr(b) = \emptyset \text{ or } a\in Fr(b)\]
\end{theorem}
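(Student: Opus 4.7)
The plan is to use the previous theorem as a pivot: it equates ``$u2+$ is a topological closure operator'' with ``$\{Fr(x): x \in S\}$ is a partition of $S$''. Denoting the middle assertion (existence of the closed-open topology with $\{Fr(x)\}$ as a base) by (A) and the pointwise condition on the right by (B), I would close the cycle (partition) $\Rightarrow$ (A) $\Rightarrow$ (B) $\Rightarrow$ (partition). A key ingredient throughout is the symmetry $y \in Fr(x)$ iff $x \in Fr(y)$, since both memberships are witnessed by a common $K \in \mathcal{C}$.

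For (partition) $\Rightarrow$ (A), I would take $\tau$ to be the topology whose opens are arbitrary unions of members of the partition $\{Fr(x)\}$; every such open is also closed (its complement is the union of the remaining blocks), so $\tau$ is closed-open, and $\{Fr(x)\}$ is a base by construction. For (B) $\Rightarrow$ (partition), assume (B) and suppose $Fr(a) \cap Fr(b) \neq \emptyset$; for any $z \in Fr(a)$ the common-$K$ witness gives $a \in Fr(z)$, (B) applied to $(a, b)$ forces $a \in Fr(b)$ (the disjointness alternative being excluded), so $a \in Fr(z) \cap Fr(b)$, and (B) applied to $(z, b)$ yields $z \in Fr(b)$; hence $Fr(a) \subseteq Fr(b)$ and by symmetry $Fr(a) = Fr(b)$, so $\{Fr(x)\}$ is a partition.

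For (A) $\Rightarrow$ (B), suppose toward a contradiction that $Fr(a) \cap Fr(b) \neq \emptyset$ but $a \notin Fr(b)$. Since $Fr(b)$ is clopen, $Fr(b)^c$ is open, and as a union of basic opens it contains some $Fr(w)$ with $a \in Fr(w) \subseteq Fr(b)^c$, hence $Fr(w) \cap Fr(b) = \emptyset$. Picking $z \in Fr(a) \cap Fr(b)$, symmetry gives $a \in Fr(z)$, so $a \in Fr(w) \cap Fr(z)$, and the base property yields some $c$ with $a \in Fr(c) \subseteq Fr(w) \cap Fr(z)$. Then $c \in Fr(c) \subseteq Fr(z)$, so by symmetry $z \in Fr(c) \subseteq Fr(w) \subseteq Fr(b)^c$, contradicting $z \in Fr(b)$.

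The main obstacle is the implication (A) $\Rightarrow$ (B): its proof requires juggling the symmetry of the common-$K$ relation, the clopen-ness of every basic open (from the partition-topology character of $\tau$), and the base property simultaneously in order to extract a contradiction from the supposed failure of (B). The other two legs of the cycle reduce to elementary set chasing once the symmetry of $Fr$ and the characterization from the previous theorem are in hand.
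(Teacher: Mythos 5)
Your proof is correct and matches the paper's approach: the paper's entire proof is the single remark that the result follows ``by an extension of the proof of the previous theorem,'' i.e., by pivoting through the partition characterization of $u2+$ being a topological closure operator, which is exactly your cycle (partition) $\Rightarrow$ (A) $\Rightarrow$ (B) $\Rightarrow$ (partition). All three legs check out, including the delicate (A) $\Rightarrow$ (B) step via the clopen-ness of $Fr(b)$, the base property, and the symmetry $y\in Fr(x) \leftrightarrow x\in Fr(y)$ (modulo the standing assumption, implicit in the paper's previous theorem as well, that the cover is proper, so that $Fr(x)\neq\emptyset$ implies $x\in Fr(x)$ and $\bigcup_x Fr(x)=S$).
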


\begin{proof}
The proof is by an extension of the the proof of the previous theorem. 
\end{proof}

Let $cfr(x) = \bigcup md(x)$ for any $x\in S$

\begin{theorem}
For a finite or an infinite $S$, $u3+$ is a topological closure operator if and only if 
 each $x\in S$ is a representative element of $cfr(x)$ for the unary cover $\{cfr(x) :\, x\in S\}$.  
\end{theorem}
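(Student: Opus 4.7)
The plan is to reduce the entire biconditional to a single semitransitivity condition on the function $cfr$, which can then be matched against the stated unarity-plus-representativity hypothesis. First I would note the simplification
\[X^{u3+} \;=\; \bigcup\{md(x):\, x\in X\} \;=\; \bigcup_{x\in X} cfr(x).\]
From this formula alone, extensivity, $\emptyset^{u3+}=\emptyset$, monotonicity, and preservation of finite (indeed arbitrary) unions are immediate, provided each $md(x)$ is nonempty so that $x\in cfr(x)$ (otherwise $\{cfr(x):x\in S\}$ fails to cover $S$ and both sides of the biconditional fail, as illustrated by the second example above). Consequently the whole content of the theorem sits in the idempotence axiom $X^{u3+u3+} = X^{u3+}$.

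Next I would show that idempotence of $u3+$ is equivalent to the semitransitivity property
\[(\ast)\qquad (\forall x, y\in S)\bigl(y\in cfr(x) \;\Longrightarrow\; cfr(y)\subseteq cfr(x)\bigr).\]
The ``if'' direction is a direct computation:
\[X^{u3+u3+} \;=\; \bigcup_{y\in X^{u3+}} cfr(y) \;=\; \bigcup_{x\in X}\,\bigcup_{y\in cfr(x)} cfr(y) \;\subseteq\; \bigcup_{x\in X} cfr(x) \;=\; X^{u3+},\]
with the reverse inclusion given by monotonicity. The ``only if'' direction is obtained by specialising idempotence to singletons: $\{x\}^{u3+} = cfr(x)$, so
\[cfr(x) \;=\; \{x\}^{u3+u3+} \;=\; \bigcup_{y\in cfr(x)} cfr(y),\]
which forces $cfr(y)\subseteq cfr(x)$ whenever $y\in cfr(x)$.

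Finally I would identify $(\ast)$ with the stated condition on $\mathcal{C}' := \{cfr(x):x\in S\}$. For the ``if'' direction of the outer biconditional, suppose $\mathcal{C}'$ is unary and each $x$ is representative of $cfr(x)$; then whenever $y\in cfr(x)$, the cover element $cfr(x)\in \mathcal{C}'$ contains $y$, so representativity of $y$ in $cfr(y)$ yields $cfr(y)\subseteq cfr(x)$, which is $(\ast)$. Conversely, assume $(\ast)$. For any $x\in S$ and any $cfr(z)\in \mathcal{C}'$ with $x\in cfr(z)$, applying $(\ast)$ with the roles $y:=x$ gives $cfr(x)\subseteq cfr(z)$. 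Hence $cfr(x)$ is contained in every element of $\mathcal{C}'$ that contains $x$, so the minimal description of $x$ with respect to $\mathcal{C}'$ is exactly $\{cfr(x)\}$; this gives both unarity of $\mathcal{C}'$ and representativity of $x$ in $cfr(x)$ at once.

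The main obstacle is purely bookkeeping: keeping the original cover $\mathcal{C}$ and the derived cover $\mathcal{C}' = \{cfr(x):x\in S\}$ apart, and recognising that the word ``representative'' in the hypothesis refers to $\mathcal{C}'$ rather than to the underlying cover. Once that is pinned down, the argument is a short set-theoretic chase through the two reformulations, and no infinitary difficulties arise because the absorption step in $(\ast)$ operates pointwise on $X^{u3+}$.
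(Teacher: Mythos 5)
Your proof is correct and takes essentially the same route as the paper's: both reduce the theorem to the idempotence of $u3+$ and use the singleton computation $\{x\}^{u3+}=cfr(x)$ together with the union formula $X^{u3+}=\bigcup_{x\in X}cfr(x)$ to show idempotence is equivalent to the condition $y\in cfr(x)\Rightarrow cfr(y)\subseteq cfr(x)$, which is exactly representativity of each $x$ in $cfr(x)$ for the cover $\{cfr(z):z\in S\}$. Your explicit handling of the degenerate case $md(x)=\emptyset$ (relevant in the infinite setting) and of the unarity of the derived cover merely spells out steps the paper leaves to ``rest of properties can be directly checked.''
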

\begin{proof}
If $u3+$ is a closure operator then 
\begin{itemize}
\item {for each $a\in S$, if $a\in cfr(b)$ for some $b\in S$, then 
\[cfr(a)\,\subseteq \bigcup \{cfr(z): \, z\in cfr(b) \} = cfr(b)^{u3+} = cfr(b) \]}
\item {So $a$ must be a representative element of $cfr(a)$ for the cover $\{cfr(z) : z\in S\}$}
\end{itemize}
If $x$ is a representative element of $cfr(a)$ for the cover $\mathbb{C} = \{cfr(z): \, z\in S\}$, then 
\begin{itemize}
\item {For each $z\in \{x\}^{u3+} = cfr(x)$, since $z$ is a representative element of $cfr(z)$ for the cover $\mathbb{C}$, 
\[\{z\}^{u3+} = cfr(z) \subseteq cfr(x) = \{x\}^{u3+} \] }
\item {So \[\{x\}^{u3+ u3+} = \bigcup \{cfr(z) :\, z\in cfr(x)\} \subseteq \{x\}^{u3+}. \] }
\item {This verifies idempotence. Rest of properties can be directly checked.}
\end{itemize}
\end{proof}

\begin{theorem}
For a finite or an infinite $S$, $u3+$ is a topological closure operator if and only if 
$\{cfr(x):\, x\in S \}$ is a base for a topology $\tau$ on $S$ and for each $x\in S$, $\{cfr(x)\}$ is a local base at $x$.
\end{theorem}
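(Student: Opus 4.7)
The plan is to use the immediately preceding theorem as the main bridge, which recasts the claim that $u3+$ is a topological closure operator as the condition that each $x$ is a representative element of $cfr(x)$ for the cover $\mathbb{C}=\{cfr(z):z\in S\}$ and that $\mathbb{C}$ is unary. Throughout, I would remember that $x\in K$ for every $K\in md(x)$, so $x\in cfr(x)$ and $\mathbb{C}$ genuinely covers $S$.

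For the forward direction I would assume $u3+$ is a topological closure operator and apply the preceding theorem to get representativeness and unariness of $\mathbb{C}$. To verify that $\mathbb{C}$ is a base for a topology $\tau$ on $S$, I would use the standard base criterion: pick any $y\in cfr(x_1)\cap cfr(x_2)$; representativeness of $y$ (applied to $cfr(x_1)$ and $cfr(x_2)$, both of which contain $y$) yields $cfr(y)\subseteq cfr(x_1)\cap cfr(x_2)$, and $y\in cfr(y)\in\mathbb{C}$. So $\mathbb{C}$ is a base. For the local-base claim at $x$, I would note that any $\tau$-open neighbourhood $U$ of $x$ contains some $cfr(z)\in\mathbb{C}$ with $x\in cfr(z)$ by the base property; representativeness of $x$ then forces $cfr(x)\subseteq cfr(z)\subseteq U$, so $\{cfr(x)\}$ is indeed a local base at $x$.

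For the reverse direction I would assume $\mathbb{C}$ is a base for a topology $\tau$ and $\{cfr(x)\}$ is a local base at each $x$, and reduce to the hypotheses of the preceding theorem. Representativeness of $x$ in $cfr(x)$ for $\mathbb{C}$: if $x\in cfr(z)$ then $cfr(z)$ is a $\tau$-open neighbourhood of $x$ (being a basic open set), hence contains $cfr(x)$ by the local-base property. Unariness of $\mathbb{C}$: for any $y\in S$, the minimal description of $y$ taken with respect to $\mathbb{C}$ is exactly $\{cfr(y)\}$ — any $cfr(z)\in\mathbb{C}$ containing $y$ contains $cfr(y)$ by the representativeness just established, so $cfr(y)$ is the unique minimal element of $\mathbb{C}$ containing $y$. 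Invoking the preceding theorem then gives that $u3+$ is a topological closure operator.

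The only real subtlety — and what I would flag as the main obstacle — is bookkeeping between two different covers simultaneously: the original cover $\mathcal{S}$ (used in the definitions of $md$, $cfr$, and $u3+$) and the derived cover $\mathbb{C}=\{cfr(z):z\in S\}$ (in terms of which the representativeness/unariness condition of the preceding theorem is stated, and in terms of which $\tau$ is generated). Provided one is careful that ``representative'' and ``unary'' are being read with respect to $\mathbb{C}$ rather than $\mathcal{S}$, the base-theoretic translations are essentially forced, and no further hypothesis (finite or infinite) on $S$ is needed since the base criterion and the local-base criterion are cardinality-free.
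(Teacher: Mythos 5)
Your proof is correct and takes essentially the same route as the paper: the paper's own proof is a two-line sketch stating that the result ``follows from the previous theorem and the result that for every unary cover there exists a topology $\tau$ on $S$ for which $\{cfr(x):\, x\in S\}$ is a base,'' with the missing steps deferred to \cite{xge2012}. What you have written supplies exactly those missing steps: the cardinality-free verification that representativeness of each $x$ in $cfr(x)$ for the derived cover $\mathbb{C}=\{cfr(z): z\in S\}$, together with the resulting unariness of $\mathbb{C}$ (via $md_{\mathbb{C}}(y)=\{cfr(y)\}$), is equivalent to $\mathbb{C}$ being a base with $\{cfr(x)\}$ a local base at each $x$. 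Your insistence on reading ``representative'' and ``unary'' with respect to $\mathbb{C}$ rather than the original cover is the right bookkeeping point and matches how the preceding theorem must be applied.

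One repair is needed. Your blanket opening claim that ``$x\in K$ for every $K\in md(x)$, so $x\in cfr(x)$ and $\mathbb{C}$ genuinely covers $S$'' fails in general for infinite $S$, because $md(x)$ can be empty: the paper's own example with $S=\mathbb{R}$ and the cover by intervals $(x-\frac{1}{n},\, x+\frac{1}{n})$ has $md(x)=\emptyset$, whence $cfr(x)=\bigcup\emptyset=\emptyset$ and $x\notin cfr(x)$. Fortunately the containment $x\in cfr(x)$ is available under each of your directional hypotheses: in the forward direction it follows from extensivity of the assumed closure operator, since $\{x\}\subseteq \{x\}^{u3+}=cfr(x)$; in the reverse direction it follows because the members of a local base at $x$ are neighbourhoods of $x$ and hence contain $x$. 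Once you source the containment from the hypotheses in this way rather than from the (possibly empty) minimal description, every use you make of it --- the covering property of $\mathbb{C}$, the base criterion step $y\in cfr(y)\subseteq cfr(x_1)\cap cfr(x_2)$, and the minimality of $cfr(y)$ in $md_{\mathbb{C}}(y)$ --- is justified, and the argument goes through for finite and infinite $S$ alike.
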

\begin{proof}
The proof follows from the previous theorem and the result that for every unary cover there exists a topology $\tau$ on $S$ for which $\{cfr(x): \, x\in S\}$ is a base for $(S, \tau )$. The missing steps can be found in \cite{xge2012}.
\end{proof}

For the proof of the next three theorems, the reader is referred to \cite{xge2012}.
\begin{theorem}
For any finite or infinite $S$, $u4+$ is a closure operator if and only if  the cover $\mathcal{C}$ satisfies 
For all $K_1, K_2\in \mathcal{C}$ if $K_1\neq K_2 \,\&\, K_1\cap K_2 \neq \emptyset$ then  $(\forall x\in K_1\cap K_2)\, \{x\}\in \mathcal{C}$.
\end{theorem}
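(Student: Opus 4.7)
The plan is to verify the four Kuratowski axioms --- $\emptyset^{u4+}=\emptyset$, extensivity, monotonicity, idempotence and finite additivity --- for $u4+$ under the stated hypothesis, and to exhibit a concrete failure of monotonicity when the hypothesis fails. Throughout, the driving observation is: if a point $u$ lies in two distinct cover elements, the hypothesis forces $\{u\}\in\mathcal{C}$, so as soon as $\{u\}$ is contained in a set $Z$ we have $u\in Z^{l1}$. This single fact collapses every otherwise-dangerous case.

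For necessity, suppose some $x\in K_1\cap K_2$ with $K_1\neq K_2\in\mathcal{C}$ and $\{x\}\notin\mathcal{C}$. After relabeling if necessary, $K_1\setminus K_2\neq\emptyset$; pick $y\in K_1\setminus K_2$. Set $X=\{x\}$ and $Y=K_2$, so $X\subseteq Y$. Since $\{x\}\notin\mathcal{C}$, $X^{l1}=\emptyset$, hence $X^{u4+}=\bigcup\{K\in\mathcal{C}:x\in K\}\supseteq K_1$, and in particular $y\in X^{u4+}$. On the other hand, $K_2\in\mathcal{C}$ yields $Y^{l1}=K_2$, so $Y^{u4+}=K_2$ and $y\notin Y^{u4+}$. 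This violates monotonicity, so $u4+$ cannot be a closure operator.

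For sufficiency, assume the hypothesis. Extensivity follows because each $x\in X$ lies in some $K\in\mathcal{C}$, and either $K\subseteq X^{l1}$ (so $x\in X^{l1}$) or $K\cap(X\setminus X^{l1})\ni x$ (so $K\subseteq X^{u4+}$); normality is immediate. For monotonicity with $X\subseteq Y$, take $z\in X^{u4+}$ and a witnessing $K\ni z$ with $v\in K\cap(X\setminus X^{l1})$; the only way $z$ can escape $Y^{u4+}$ is if $v\in Y^{l1}$ via some $K''\subseteq Y$ with $K''\neq K$. But $v\in K\cap K''$ with $K\neq K''$ forces $\{v\}\in\mathcal{C}$, yielding $v\in X^{l1}$, a contradiction. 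The main step is idempotence: writing $U=X^{u4+}$, we must show any $K$ meeting $U\setminus U^{l1}$ lies in $U$. Pick $u\in K\cap U$ with $u\notin U^{l1}$. If $u\in X^{l1}$ via some $K_u\subseteq X\subseteq U$, then $u\in U^{l1}$, contradiction; so $u$ is in some $K'$ with $K'\cap(X\setminus X^{l1})\neq\emptyset$, whence $K'\subseteq U$. If $K=K'$, done; otherwise $K\neq K'$ both contain $u$, so $\{u\}\in\mathcal{C}$ and $\{u\}\subseteq U$ gives $u\in U^{l1}$, contradiction.

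Additivity then reduces to a short case analysis on witnesses: an element $v\in(X\cup Y)\setminus(X\cup Y)^{l1}$ lies in $X$ or $Y$ (say $X$), and cannot be in $X^{l1}$ (else $v\in(X\cup Y)^{l1}$), so the cover element carrying it is already inside $X^{u4+}$; the reverse inclusion is monotonicity. The principal technical obstacle is the idempotence step, where the hypothesis is used in its sharpest form to eliminate the case of an element $u\in U\setminus U^{l1}$ simultaneously witnessing two distinct cover elements; without singletons-in-$\mathcal{C}$ one would get precisely the type of leakage exhibited in the necessity counterexample.
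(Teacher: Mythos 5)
Your proof is correct, but there is nothing in the paper to compare it against line by line: this theorem is one of the three for which the text says only that ``for the proof of the next three theorems, the reader is referred to'' the source \cite{xge2012}. Your write-up is thus a self-contained replacement for the outsourced proof, and it checks out. The necessity direction is sound: since $\emptyset\notin\mathcal{C}$ and $\{x\}\notin\mathcal{C}$, you indeed get $\{x\}^{l1}=\emptyset$, hence $\{x\}^{u4+}\supseteq K_1$, while $K_2^{l1}=K_2$ forces $K_2^{u4+}=K_2$, so monotonicity fails for $\{x\}\subseteq K_2$; and since every reading of ``closure operator'' in this section (the weak one --- extensive, monotone, idempotent --- or the full Kuratowski one, where additivity implies monotonicity) contains monotonicity, one counterexample disposes of both. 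The sufficiency direction correctly runs everything through the single key fact that a point lying in two distinct cover elements has its singleton in $\mathcal{C}$ and therefore lies in $Z^{l1}$ for every superset $Z$; your case analyses for monotonicity, idempotence and additivity are exhaustive (in the idempotence step the alternatives $u\in X^{l1}$, $K=K'$, $K\neq K'$ cover all cases, and in additivity the missing part $(X\cup Y)^{l1}$ is absorbed by extensivity, as you implicitly use). A consequence worth noting is that your argument shows the weak and the Kuratowski closure properties of $u4+$ coincide under the stated covering condition, which is consistent with the topological characterization the paper states immediately afterwards.

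Two small repairs you should make explicit. First, extensivity requires every $x\in S$ to lie in some cover element, i.e., that $\mathcal{C}$ is a proper (dense) cover; the paper's definition of ``cover'' is loose on this point, but its identification of a covering approximation space with a \emph{dense} Tarski set ($\bigcup\mathcal{S}=X$) licenses the assumption, so state it rather than leave it tacit. Second, in your extensivity sentence the dichotomy should be on whether $x\in X^{l1}$, not on whether $K\subseteq X^{l1}$: if $x\notin X^{l1}$, then any $K\ni x$ automatically meets $X\setminus X^{l1}$, whereas a cover element containing $x$ need not be a subset of $X^{l1}$ even when $x\in X^{l1}$. This is a phrasing slip only and does not affect the argument.
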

\begin{theorem}
For any finite or infinite $S$, $u4+$ is a closure operator if and only if  the cover $\mathcal{C}$ is a base for a topology $\tau$ on $S$ and 
\begin{itemize}
\item {$(S, \tau)$ is a union of disjoint subspaces $S_1$ and $S_2$. }
\item {For any distinct $A, B\in \mathcal{C}$, either $A\cap S_2 = B\cap S_2 = \emptyset$ or $A\cap S_2 \neq B\cap S_2$ and $\{F\cap   S_2: \, F\in \mathcal{C}\}$ is a partition of $S_2$, and}
\item {In the topologies $\tau_1,\, \tau_2$ induced on $S_1$ and $S_2$ respectively, $S_1$ is a discrete topological space and $S_2$ is a pseudo-discrete space. }
\end{itemize}
\end{theorem}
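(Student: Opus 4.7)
The plan is to leverage the immediately preceding theorem as the pivot. That result characterizes $u4+$ as a closure operator by the condition $(\star)$: whenever $K_1, K_2\in\mathcal{C}$ are distinct and $K_1\cap K_2\neq\emptyset$, every $x\in K_1\cap K_2$ satisfies $\{x\}\in\mathcal{C}$. It therefore suffices to prove that $(\star)$ is equivalent to the stated topological decomposition.

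For the forward direction, I would first observe that $(\star)$ is precisely the base condition for a topology: if $x\in K_1\cap K_2$ with distinct $K_1, K_2\in\mathcal{C}$, then $\{x\}\in\mathcal{C}$ witnesses $x\in\{x\}\subseteq K_1\cap K_2$. So $\mathcal{C}$ is a base for a unique topology $\tau$ on $S$. Next, partition $S$ by $S_1 = \{x\in S:\{x\}\in\mathcal{C}\}$ and $S_2 = S\setminus S_1$. For any $x\in S_2$, condition $(\star)$ forces $x$ to lie in a \emph{unique} cover element $K_x$; hence $\{K_x\cap S_2:x\in S_2\}$ is a partition of $S_2$ (any $y\in K_x\cap S_2$ satisfies $K_y = K_x$ by uniqueness), and this partition is a base for the induced topology $\tau_2$, making $S_2$ pseudo-discrete in the intended sense. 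The same uniqueness yields the distinctness clause: two cover elements $A\neq B$ both meeting $S_2$ cannot share a point in $S_2$, so $A\cap S_2\neq B\cap S_2$. Finally, on $S_1$ every singleton $\{x\}$ is in $\mathcal{C}$, hence $\tau$-open, so $\tau_1$ is discrete.

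For the converse, assume the structural conditions and verify $(\star)$. Take distinct $K_1, K_2\in\mathcal{C}$ with $x\in K_1\cap K_2$. If $x\in S_2$, then $K_1\cap S_2$ and $K_2\cap S_2$ are both nonempty blocks of the partition $\{F\cap S_2:F\in\mathcal{C}\}$ containing $x$, so they coincide --- contradicting the distinctness clause. Hence $x\in S_1$; using discreteness of $\tau_1$ together with the fact that $S_1$ is open in $\tau$ (extracted from the disjoint-subspace decomposition), $\{x\}$ is $\tau$-open, and since $\mathcal{C}$ is a base there is some $K\in\mathcal{C}$ with $x\in K\subseteq\{x\}$, forcing $K=\{x\}\in\mathcal{C}$.

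The main obstacle, and the point requiring the most care, is that final step: deriving the \emph{cover-membership} $\{x\}\in\mathcal{C}$, not merely $\tau$-openness of $\{x\}$, from discreteness of the subspace $S_1$. This relies on $S_1$ being open (hence clopen) in $\tau$, which must be read off from the stated partition and pseudo-discreteness conditions on $S_2$ rather than assumed a priori. Pinning down the precise regularity of the $S_1$–$S_2$ separation, and matching it with the intended meaning of \emph{pseudo-discrete} in \cite{xge2012}, is the delicate interpretive step; once that is granted, the remainder is a routine translation between the cover-theoretic condition $(\star)$ and the topological formulation.
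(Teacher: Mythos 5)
The paper itself supplies no internal proof here --- it defers this theorem (and the one before it) to \cite{xge2012} --- so your reduction to the immediately preceding singleton criterion, call it $(\star)$: \emph{distinct $K_1,K_2\in\mathcal{C}$ with $x\in K_1\cap K_2$ force $\{x\}\in\mathcal{C}$}, is the right and essentially only pivot available, and your forward direction is sound ($\mathcal{C}$ is a base, $S_1=\{x:\{x\}\in\mathcal{C}\}$, each point of $S_2$ lies in a unique cover element, the nonempty traces partition $S_2$ and generate $\tau_2$, so $\tau_2$ is pseudo-discrete in the sense of the paper's earlier ``closed-open topology generated by a partition''). The genuine gap is exactly where you flag it, and it is not merely interpretive: in the converse you need $S_1$ to be open in $\tau$ to upgrade subspace-discreteness of $\tau_1$ to $\tau$-openness of $\{x\}$, but openness of $S_1$ neither appears among the hypotheses nor follows from them. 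Counterexample: $S=\{a,b\}$, $\mathcal{C}=\{\{a,b\}\}$, so $\tau$ is indiscrete; the decomposition $S_1=\{a\}$, $S_2=\{b\}$ satisfies every bullet ($\tau_1$ and $\tau_2$ are one-point spaces, the trace condition is vacuous), yet $S_1$ is not $\tau$-open. So ``union of disjoint subspaces'' cannot be read as a topological sum, and the step ``hence $\{x\}$ is $\tau$-open'' fails as written.

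Fortunately the gap closes without any openness assumption, by drilling the basic set down inside the discreteness witness and then using the trace condition rather than openness of $S_1$. Given $x\in S_1$ with $x\in K_1\cap K_2$ and $K_1\neq K_2$: discreteness of $\tau_1$ yields a $\tau$-open $U$ with $U\cap S_1=\{x\}$; since $\mathcal{C}$ is a base and $U\cap K_1\cap K_2$ is open and contains $x$, pick $K\in\mathcal{C}$ with $x\in K\subseteq U\cap K_1\cap K_2$, so that $K\cap S_1=\{x\}$. If $K\cap S_2\neq\emptyset$, choose $i$ with $K\neq K_i$ (possible since $K_1\neq K_2$); then $K\cap S_2$ and $K_i\cap S_2$ are both nonempty, so the second bullet forces $K\cap S_2\neq K_i\cap S_2$, yet both are blocks of the partition of $S_2$ and share every point of $K\cap S_2\subseteq K_i\cap S_2$ --- a contradiction. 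Hence $K\cap S_2=\emptyset$ and $K=\{x\}\in\mathcal{C}$, which is precisely $(\star)$; your treatment of the case $x\in S_2$ already works verbatim, so with this substitution the converse, and the proof, is complete.
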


From the above results it can be deduced that
\begin{theorem}
$u4+ $ is a closure operator $Rightarrow$ $u1 $ is a closure operator $Rightarrow$ $u3+ $ is a closure operator and no other relation between similar statements hold.
\end{theorem}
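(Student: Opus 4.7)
The plan is to split the statement into three tasks: prove the forward implication $u4+ \Rightarrow u1$, prove $u1 \Rightarrow u3+$, and then rule out the reverse implications (and the ``diagonal'' $u3+ \Rightarrow u4+$) by exhibiting small counterexamples. Each implication will be obtained by invoking the characterisations already established in the preceding theorems, rather than by unpacking the operators from scratch.

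For $u4+ \Rightarrow u1$, I would use the criterion proved earlier: whenever $K_1 \neq K_2$ in $\mathcal{C}$ overlap, every point in $K_1 \cap K_2$ must appear as a singleton of $\mathcal{C}$. From this I claim the cover is unary. Indeed, suppose $K_1, K_2 \in \mathrm{md}(x)$ are distinct; then $x \in K_1 \cap K_2$, hence $\{x\} \in \mathcal{C}$, contradicting the minimality of $K_1$ in $\mathrm{md}(x)$. Once unarity is in hand, the equivalence theorem of \cite{xge2012} (quoted above) yields that $\mathcal{C}$ is a base of a topology and that $u1$ coincides with the induced Kuratowski closure. For $u1 \Rightarrow u3+$, the same equivalence tells me that $u1$ closure forces unarity, so $\mathrm{md}(x) = \{K_x\}$ is a singleton and $\mathrm{cfr}(x) = K_x$. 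I then verify that $x$ is representative of $\mathrm{cfr}(x)$ in the cover $\{\mathrm{cfr}(z) : z \in S\}$: if $x \in \mathrm{cfr}(y) = K_y$, then $K_y$ is a member of $\mathcal{C}$ containing $x$, so by minimality $K_x \subseteq K_y$, i.e.\ $\mathrm{cfr}(x) \subseteq \mathrm{cfr}(y)$. The representative-element characterisation then gives $u3+$ closure.

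For the negative half, I would present two minimal covers. To refute $u1 \Rightarrow u4+$, take $S = \{a,b,c\}$ with $\mathcal{C} = \{\{a,b\},\{a,b,c\}\}$: here $\mathrm{md}$ is singleton-valued at every point, so $\mathcal{C}$ is unary and $u1$ is a closure, but $\{a,b\} \cap \{a,b,c\} = \{a,b\}$ contains no singleton of $\mathcal{C}$, so the $u4+$ criterion fails. To refute $u3+ \Rightarrow u1$ (and simultaneously $u3+ \Rightarrow u4+$), take $S = \{a,b,c\}$ with $\mathcal{C} = \{\{a,b\},\{a,c\},\{b,c\}\}$: each point has two distinct minimal neighbourhoods so $\mathcal{C}$ is not unary and $u1$ is not a closure, yet $\mathrm{cfr}(x) = S$ for every $x$, trivially making each $x$ representative, so $u3+$ is a closure operator.

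The main obstacle I anticipate is the transition from \emph{unarity} to \emph{$u1$ being a Kuratowski closure} in the argument for $u4+ \Rightarrow u1$, because the infinite-case example in the excerpt shows that unarity alone need not suffice. I plan to handle this by leaning on the finer structural description of $u4+$ closures provided earlier, namely the decomposition $S = S_1 \sqcup S_2$ into a discrete and a pseudo-discrete part where $\mathcal{C}$ is a base of a topology; in this setting every point admits a smallest member of $\mathcal{C}$ as a local base, which is precisely what is needed to upgrade unarity to the topological closure property for $u1$. The remaining routine checks (extensivity, idempotence, preservation of unions) then reduce to direct verification in the two pieces $S_1$ and $S_2$.
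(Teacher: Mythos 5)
Your proposal is essentially the deduction the paper intends but never writes out: the paper's ``proof'' of this theorem is the single phrase ``From the above results it can be deduced that'', so your fleshed-out argument from the preceding characterisations, plus explicit counterexamples for the non-implications (which the paper omits entirely), supplies exactly the missing content. In the finite case everything you wrote checks out: the $u4+$ singleton criterion does force unarity (if $K_1, K_2 \in \mathrm{md}(x)$ are distinct, then $\{x\}\in\mathcal{C}$ forces $K_1=K_2=\{x\}$, a contradiction); the cover $\{\{a,b\},\{a,b,c\}\}$ is unary (so $u1$ is a topological closure by the quoted equivalence) yet violates the $u4+$ criterion at $\{a,b\}\cap\{a,b,c\}$; and $\{\{a,b\},\{a,c\},\{b,c\}\}$ has $cfr(x)=S$ for all $x$, so the representative-element criterion holds trivially and $u3+$ is the indiscrete closure, while $\mathrm{md}(a)=\{\{a,b\},\{a,c\}\}$ kills unarity and hence $u1$. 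These two examples refute all of $u1\Rightarrow u4+$, $u3+\Rightarrow u1$ and $u3+\Rightarrow u4+$, which is all that ``no other relation'' requires.

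The one soft spot is your step ``$x\in cfr(y)=K_y$, so by minimality $K_x\subseteq K_y$'' in the proof of $u1\Rightarrow u3+$: this conflates a \emph{unique minimal} member with a \emph{minimum}. In a finite cover the two coincide (every member containing $x$ contains some minimal member containing $x$, which by unarity must be $K_x$), so your argument is sound there; but in an infinite cover a unique minimal member need not be contained in every member containing $x$ --- the paper's own example on $[-1,1]$ has $\mathrm{md}(0)=\{\{-1,0,1\}\}$ with $\{-1,0,1\}\nsubseteq(-\frac{1}{n},\frac{1}{n})$. Since the $u1$ equivalence you invoke twice is stated only for finite $S$ (while the $u3+$ and $u4+$ characterisations hold in general), your chain is licensed only in the finite reading; the paper is ambiguous on scope, so this is a caveat rather than a refutation, but you should state the restriction. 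Incidentally, for $u4+\Rightarrow u1$ in full generality there is a patch simpler than the $S_1\sqcup S_2$ decomposition you propose: the singleton criterion gives every point a \emph{least} member $m(x)$ of $\mathcal{C}$ containing it (namely $\{x\}$ if $x$ lies in two distinct members, and the unique member containing $x$ otherwise), whence $X^{u1}=\bigcup_{x\in X} m(x)$, which is an Alexandrov and hence topological closure; the same minimum-neighbourhood structure yields $cfr(x)=m(x)$ and representativity, so it even gives $u4+\Rightarrow u3+$ directly in the infinite case, though it leaves the infinite case of $u1\Rightarrow u3+$ open as your argument does.
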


A number of if and only conditions for a covering $\mathcal{C}$ being unary are known. Some of these are summarized below:

\begin{theorem}
A cover $\mathcal{C}$ of a set $S$ is unary if and only if
\begin{itemize}
\item {$u3+ = u1$}
\item {$(\forall x\in S)\, nbd(x) \in \mathcal{C}$}
\item {$(\forall X\subseteq S)\, (X^{u4+})^{u3+} = X^{u4+}$.}
\item {$(\forall X\subseteq S)\, (X^{u2+})^{u3+} = X^{u2+}$.}
\end{itemize}
\end{theorem}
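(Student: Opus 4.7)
My plan is to treat condition (2), $nbd(x)\in \mathcal{C}$ for all $x$, as the pivotal reformulation of unary-ness, since it is the one most directly synonymous with the definition. I will first establish the equivalence unary $\iff$ (2) from first principles, then derive the three approximation identities (1), (3), (4) as consequences of (2), and finally close the cycle by showing each of (1), (3), (4) individually implies unary via a contrapositive with a carefully chosen witness $X$. The direct equivalence unary $\iff$ (2) goes as follows: if $md(x)=\{A\}$, then every $K\in\mathcal{C}$ with $x\in K$ must satisfy $A\subseteq K$, for otherwise a minimal element of $\{K'\in\mathcal{C}:x\in K'\subseteq K\}$ would give a second minimal description distinct from $A$; hence $A=\bigcap\{K:x\in K\in\mathcal{C}\}=nbd(x)\in\mathcal{C}$. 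Conversely, if $nbd(x)\in\mathcal{C}$ it is trivially the unique $\subseteq$-minimum of the family of cover elements containing $x$, forcing $md(x)=\{nbd(x)\}$.

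For the forward implications unary $\Rightarrow$ (1), (3), (4), the key reduction is that in a unary cover $X^{u3+}=\bigcup_{x\in X} nbd(x)$. The identity $u3+=u1$ then follows from the observation that $x\in X^{l1}$ implies $nbd(x)\subseteq X^{l1}$: pick $K\in\mathcal{C}$ with $x\in K\subseteq X$; since $nbd(x)\in\mathcal{C}$ and $nbd(x)\subseteq K\subseteq X$, definition of $l1$ gives $nbd(x)\subseteq X^{l1}$. Thus the parts of $X^{u3+}$ coming from $X^{l1}$ are absorbed, and what remains matches $X^{u1}$ exactly. For conditions (3) and (4), I will verify the inclusion $nbd(y)\subseteq X^{u4+}$ (respectively $\subseteq X^{u2+}$) for every $y$ in the outer set, by cases: if $y\in X^{l1}$ use the lemma above, otherwise $y$ lies in some $K\in\mathcal{C}$ already included in $X^{u4+}$ (resp.\ $X^{u2+}$), and $nbd(y)\subseteq K$. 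The reverse containment $X^{u4+}\subseteq (X^{u4+})^{u3+}$ (and analogously for $u2+$) is immediate from $y\in nbd(y)$.

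The main obstacle is the reverse implications, for which I plan contrapositive arguments. Assume the cover is not unary, so there exist $x_0$ and distinct $A, B\in md(x_0)$; note $A\not\subseteq B$ and $B\not\subseteq A$ by minimality. For (1), set $X=A$: then $X^{l1}=A$, hence $X^{u1}=A$, but $A^{u3+}$ contains all of $B$, and $B\setminus A\neq\emptyset$, giving $X^{u1}\neq X^{u3+}$. The same witness $X=A$ serves (3), because $X\setminus X^{l1}=\emptyset$ forces $X^{u4+}=A$ while $A^{u3+}\supsetneq A$. The delicate case is (4), since $X=A$ makes $X^{u2+}$ absorb both $A$ and $B$ (as $x_0\in A\cap B$) and may thereby accidentally be $u3+$-closed. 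My plan here is to pick $a\in A\setminus B$ and take $X=\{a\}$ (assuming $\{a\}\notin\mathcal{C}$, otherwise shift the witness), so that $X^{u2+}$ collects only those $K\in\mathcal{C}$ meeting $\{a\}$; then $x_0\in A\subseteq X^{u2+}$ but $B$ need not lie in $X^{u2+}$, and expanding via $u3+$ at $x_0$ produces $B\not\subseteq X^{u2+}$. Verifying that such an $a$ and such a chain of witnesses exist uniformly for every non-unary cover, independently of the topological subtleties exposed in the infinite examples earlier in the section, is the technical heart of the argument; the finite case should follow cleanly, and the infinite case will require the same care as in the preceding theorems on $u2+$, $u3+$, $u4+$.
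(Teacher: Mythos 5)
First, note that the paper itself contains no proof of this statement: it is presented as a summary of known characterizations, with the reader directed to the covering-rough-sets literature (cf.\ \cite{xge2012}), so your proposal has to stand on its own merits. Much of it does, in the finite case: the equivalence unary $\Leftrightarrow$ $nbd(x)\in\mathcal{C}$, the forward implications to (1), (3), (4) via the lemma that $x\in X^{l1}$ implies $nbd(x)\subseteq X^{l1}$, and the witness $X=A$ (for distinct $A,B\in \mathrm{md}(x_0)$) refuting (1) and (3) are all sound. But the step you yourself single out as the technical heart --- the contrapositive for (4) --- is not merely delicate; it is impossible. Take $S=\{1,2,3\}$ and $\mathcal{C}=\{\{1,2\},\{2,3\},\{1,3\}\}$. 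This cover is not unary ($\mathrm{md}(x)$ is a two-element set for every $x$), yet $Fr(x)=S$ for every $x$, so $X^{u2+}=S$ for every nonempty $X$, and $S^{u3+}=S$; hence $(X^{u2+})^{u3+}=X^{u2+}$ holds for \emph{all} $X\subseteq S$. So no witness of the kind you are hunting for exists --- in your proposed $X=\{a\}$ the set $X^{u2+}=Fr(a)$ swallows $B$ automatically here --- and condition (4) taken alone does not imply unarity. Your cycle therefore cannot be closed through (4); the statement is salvageable only under the conjunctive reading (unary if and only if all four bullets hold together), in which case your already-established implication (2) $\Rightarrow$ unary closes the loop and the search for a (4)-witness should simply be abandoned.

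Second, your opening equivalence silently uses finiteness: extracting ``a minimal element of $\{K'\in\mathcal{C}: x\in K'\subseteq K\}$'' requires descending chains to terminate. For infinite covers, unarity in the sense $\#\mathrm{md}(x)=1$ does not yield $nbd(x)\in\mathcal{C}$: on $S=\mathbb{R}$ take $\mathcal{C}=\{\{0,1\}\}\cup\{(-\frac{1}{n},\frac{1}{n}): n\in\mathbb{N}\}\cup\{\{y\}: y\neq 0\}$. Then $\mathrm{md}(0)=\{\{0,1\}\}$ and $\mathrm{md}(y)=\{\{y\}\}$ otherwise, so the cover is unary, while $nbd(0)=\{0\}\notin\mathcal{C}$; moreover, for $K=(-1,1)$ one gets $K^{u1}=(-1,1)$ but $K^{u3+}=(-1,1]$, so bullet (1) fails as well. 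Thus your closing hope that the infinite case ``will require the same care as in the preceding theorems'' cannot be realized: the equivalences genuinely fail without finiteness, which is consistent with the paper restricting the companion characterizations (Theorem~\ref{unaryrep} and the base-for-a-topology theorem) to finite $S$. Your proof should be stated for finite $S$, with bullet (4) handled only in the forward direction.
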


\section{Correspondences from Rough Perspective}

From the previous section, it should be clear that in the finite case the following summary holds.

\begin{itemize}
\item {Let $\left\langle S, \mathcal{C}  \right\rangle$ be a CAS}
\item {Its dual is $\Delta (S) = \{U : \, \exists W\in \mathcal{C}\,\&\,\exists H\subseteq W\, \&\, U= W^c\cup H \}$}
\item {If $(\forall A, B\in \Delta(S))\, A\cdot B = A^c\cup B$, then $\left\langle\underline{\Delta(S)}, \cdot, S   \right\rangle$ is a Tarski subalgebra of sets}
\item {The map $\xi_S: S\longmapsto Spec(\Delta(S))$ defined by $\xi_S(x) = \{U:\, x\in U\in \Delta(S)\}$ is injective and a surjection.}
\item {For the converse, if $H$ is a finite Tarski algebra, then define an embedding $\sigma: H \longmapsto \wp (Spec(H))$ via $(\forall x )\, \sigma_H(x) = \{K:\, x\in K\in Spec(H)\}$  into the Tarski algebra of sets on $\wp (Spec(H))$ - the latter is determined by  the coatoms of $H$. Let $\mathcal{K}_H = \{\sigma(x)^c:\, x\in H \}$, then $\left\langle Spec(H), \mathcal{K}_H   \right\rangle$ is the associated set of $H$}
\item {Further $\sigma_H(H) = \Delta(Spec(H))$ and so $H\cong \Delta(Spec(H))$. The computation of $Spec(H)$ is simplified by the fact that every element of it is of the form $(x\downarrow)^c$ for a coatom $x$}
\end{itemize}

If the covering approximation space is infinite, then additional topologies are required to form a duality context with Tarski algebras. The requirements are strong (especially the Hausdorff part). Further only in a few cases does it happen that the upper approximation is topological (see \cite{am501}). 

\subsection{Direct Embedding Theorems}

As noted earlier, in general $\sigma$-algebras are quite different from Boolean algebras. It is useful to know when a covering approximation space or a set with a granulation is embeddable in a $\sigma$ algebra. 

\begin{theorem}
If $\mathcal{S} = \left\langle \underline{\mathcal{S}}, \cup, \cap, ^c , \emptyset    \right\rangle$ is an abstract $\sigma$-algebra, then it is possible to define an implication operation $\cdot$  as below such that $\left\langle\underline{\mathcal{S}}, \cdot   \right\rangle$ is a Tarski algebra:
\[(\forall A, B\in \mathcal{S})\, A\cdot B = A^c\cup B\]
\end{theorem}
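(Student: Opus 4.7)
The plan is to observe that the definition of an abstract $\sigma$-algebra given earlier in the paper already makes $\mathcal{S}$ a bounded complemented distributive lattice, i.e.\ a Boolean algebra; the countable-joins part plays no role in any of the Tarski axioms T1--T4. So the theorem reduces to the classical fact that material implication turns any Boolean algebra into a Tarski algebra. I would first fix notation: take $1 := \emptyset^c$ as the top element (guaranteed by bounded complementation), and note $A\cup A^c = 1$, $A\cap A^c = \emptyset$, together with associativity, commutativity, idempotence, the De Morgan laws, and distributivity.

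Next I would verify the four axioms in order. For T1, $1\cdot A = 1^c\cup A = \emptyset \cup A = A$. For T2, $A\cdot A = A^c\cup A = 1$. For T3, expand the left-hand side as
\begin{equation*}
A\cdot(B\cdot C) \;=\; A^c \cup (B^c\cup C) \;=\; A^c\cup B^c\cup C,
\end{equation*}
and the right-hand side as
\begin{equation*}
(A\cdot B)\cdot(A\cdot C) \;=\; (A^c\cup B)^c \cup (A^c\cup C) \;=\; (A\cap B^c)\cup A^c\cup C.
\end{equation*}
By distributivity $(A\cap B^c)\cup A^c = (A\cup A^c)\cap(B^c\cup A^c) = A^c\cup B^c$, and both sides agree. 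For T4, compute
\begin{equation*}
(A\cdot B)\cdot B \;=\; (A^c\cup B)^c \cup B \;=\; (A\cap B^c)\cup B \;=\; (A\cup B)\cap(B^c\cup B) \;=\; A\cup B,
\end{equation*}
which is symmetric in $A$ and $B$, so it also equals $(B\cdot A)\cdot A$.

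There is no real obstacle here since each axiom reduces to a one- or two-step Boolean computation; the mild care point is T3, where one must invoke distributivity rather than just De Morgan. I would emphasise in the write-up that the content of the theorem is not the verification itself but the observation that it is precisely the Boolean (rather than the $\sigma$-complete) part of the structure which is relevant, so the Tarski-algebra viewpoint does not ``see'' the countable operations. This is what makes the subsequent duality arguments compatible only with a fragment of the $\sigma$-algebra structure, foreshadowing the ontological limitations discussed elsewhere in the paper.
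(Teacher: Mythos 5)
Your proposal is correct and matches the paper's approach: the paper's proof simply states that $\cdot$ is well defined and that the Tarski axioms ``can be directly verified,'' and your Boolean computations (using $1=\emptyset^c$, De Morgan, and distributivity for T3) are exactly that verification carried out in full. Your closing remark that only the Boolean reduct, not the $\sigma$-completeness, is used is a sound observation consistent with the paper's discussion, though the paper leaves it implicit.
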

\begin{proof}
Obviously the operation $\cdot$ is well defined. The axioms of a Tarski algebra can be directly verified.  
\end{proof}

\begin{definition}
A Tarski algebra obtained as in the above theorem will be referred to as a \emph{abstract $\sigma$- Tarski algebra} ASIA. A Tarski algebra obtained from a concrete $\sigma$-algebra will be said to be a \emph{$\sigma$- Tarski algebra} SIA.
\end{definition}

\begin{theorem}
In both SIA and ASIA, the operation $\cdot$ can be extended in a countably infinite way and if $\{a_i\}$ is an infinite sequence of elements, then \[a_1 (a_2 (a_3 (\dots (a_n(\ldots )\ldots ) = (a_1 a_2 )(a_1 (a_3 (\dots (a_n(\ldots )\ldots ))   \tag{T3+}\]   
\end{theorem}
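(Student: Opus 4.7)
The plan is to first reduce nested implications to unions of complements in the finite case, then extend to the countable case using the $\sigma$-algebra closure properties, and finally verify \textsf{T3+} by direct set-theoretic computation.

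First I would prove by induction on $n$ that the right-associated finite nested implication satisfies
\[a_1(a_2(\cdots(a_{n-1} a_n)\cdots)) = a_1^c \cup a_2^c \cup \cdots \cup a_{n-1}^c \cup a_n.\]
The base case $n=2$ is just the definition $a_1 a_2 = a_1^c \cup a_2$. The inductive step uses $a_1 \cdot (a_2 \cdot B) = a_1^c \cup (a_2^c \cup B) = a_1^c \cup a_2^c \cup B$ on the inner nested term. This computation is valid identically in both SIA (where $\cup$ and $^c$ are the set operations) and ASIA (where they are the abstract lattice operations), since only finite distributivity and De Morgan are invoked.

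Next I would define the countable extension. The key observation is that a sigma algebra (concrete or abstract) is closed under countable unions, so the element $U = \bigcup_{i=1}^\infty a_i^c$ is a bona fide member of the algebra. I define
\[a_1(a_2(a_3(\cdots(a_n(\cdots))\cdots))) := \bigcup_{i=1}^\infty a_i^c,\]
which is the natural sup in $\mathcal{S}$ of the monotone-increasing sequence $R_n = \bigcup_{i=1}^n a_i^c$. Note that the finite formula above gives $a_1(a_2(\cdots a_n)) = R_{n-1} \cup a_n$, and since $a_n$ is bounded above by the top element while $R_{n-1}$ is ascending, the only ``stable'' candidate for the infinite nested expression is $\bigcup_{i=1}^\infty a_i^c$; adopting this as the definition is thus both canonical and consistent with the finite truncations modulo the vanishing ``tail'' term.

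Finally I would verify \textsf{T3+} by direct expansion. The LHS is $\bigcup_{i=1}^\infty a_i^c$ by definition. For the RHS, applying the definition twice,
\[(a_1 a_2)\!\cdot\!\bigl(a_1(a_3(a_4(\cdots)))\bigr) = (a_1 a_2)^c \,\cup\, \bigl(a_1^c \cup a_3^c \cup a_4^c \cup \cdots\bigr),\]
and since $(a_1 a_2)^c = (a_1^c \cup a_2)^c = a_1 \cap a_2^c$, the RHS becomes
\[(a_1 \cap a_2^c) \cup a_1^c \cup a_3^c \cup a_4^c \cup \cdots.\]
By distributivity, $(a_1 \cap a_2^c) \cup a_1^c = (a_1 \cup a_1^c) \cap (a_2^c \cup a_1^c) = a_1^c \cup a_2^c$, so the RHS collapses to $\bigcup_{i=1}^\infty a_i^c$, matching the LHS.

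The main obstacle is conceptual rather than computational: giving a coherent meaning to the infinite nested implication. In the ASIA case one cannot fall back on pointwise reasoning, so the argument must be phrased entirely in terms of the algebraic $\sigma$-closure and the distributive/De Morgan identities inherited from the abstract $\sigma$-algebra. Once the definition is fixed via the countable join $\bigcup_i a_i^c$, the identity \textsf{T3+} is a one-line absorption calculation; most of the work is in justifying that this definition is the right (and only) one consistent with the finite axiom \textsf{T3}.
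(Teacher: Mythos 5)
Your proposal is essentially correct, and it is worth noting that the paper itself supplies \emph{no} proof of this theorem at all --- the statement is left as a direct-verification claim, so your write-up actually fills a gap rather than duplicating an argument. Your finite induction giving $a_1(a_2(\cdots(a_{n-1}a_n)\cdots)) = a_1^{c}\cup\cdots\cup a_{n-1}^{c}\cup a_n$, the definition of the infinite nest as the countable join $\bigcup_{i=1}^{\infty} a_i^{c}$ (which exists precisely by $\sigma$-closure, in both the concrete and abstract cases), and the one-line absorption check $(a_1\cap a_2^{c})\cup a_1^{c} = a_1^{c}\cup a_2^{c}$ together establish exactly what is asserted: an extension of $\cdot$ to countable sequences satisfying \textsf{T3+}. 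All the identities you invoke (De Morgan, finite distributivity) are available in ASIA because the paper's abstract $\sigma$-algebra is a complemented distributive lattice, so the computation does not secretly rely on pointwise reasoning.

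Two ancillary remarks in your proposal are, however, overstated and you should soften or delete them. First, the claim that your definition is the \emph{only} one consistent with \textsf{T3} is false: the trivial extension assigning the top element $1$ to every infinite nest also satisfies \textsf{T3+} (both sides become $1$), and more generally any fixed point of $X \mapsto a_1^{c}\cup X$ lying above $\bigcup_i a_i^{c}$ could be used; yours is the least such solution, which is a good reason to prefer it, but not a uniqueness proof. Second, the ``vanishing tail'' remark is inaccurate: the finite truncations of the nested expression are $R_{n-1}\cup a_n$ with $R_{n-1}=\bigcup_{i<n} a_i^{c}$, and these need not converge to $\bigcup_i a_i^{c}$ --- take the constant sequence $a_i = A$, where every truncation equals $A^{c}\cup A = 1$ while your defined value is $A^{c}$. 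What is true, and all you need, is that the monotone sequence $R_n$ itself has supremum $\bigcup_i a_i^{c}$; since the infinite value is introduced by definition rather than as a limit of truncations, neither inaccuracy damages the actual proof of \textsf{T3+}.
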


\section{Squeezed Block Semantics}

The reason for considering this specific rough set approach is because it is more general than classical rough sets and can be viewed from the perspective of all of granular operator spaces, relational rough sets and cover-based rough sets. 

There are a few granular approaches to similarity approximation spaces or tolerance spaces, that is general approximation spaces of the form $S\, =\,\left\langle\underline{S}, T  \right\rangle$ with $T$ being a tolerance relation on $S$. Possible semantics depend on choice of granulation. Some choices of granulations in the context are the following:

\begin{itemize}
\item {The collection $\mathcal{B}$ of blocks (maximal subsets $B$ of $S$ that satisfy $B^2 \subseteq T$),}
\item {The collection of successor $\mathcal{N}$ and predecessor $\mathcal{N}_i$ neighborhoods generated by $T$ and }
\item {The collection $\mathcal{T}\, =\,\{\cap(\Gamma): \, \Gamma \subseteq \mathcal{B}\}$. These will be called the collection of \emph{squeezed blocks}.}
\end{itemize}

In classical rough set theory, the negative region of a set is the lower approximation of the complement of the set. This region is disjoint from the upper approximation of the set in question. An analogous property fails to hold in tolerance spaces (TAS). To deal with this different semantic approaches (to tolerance space ) involving modified upper approximations has been considered in \cite{sw,sw3,am105}. The modified upper approximations are formed from upper approximations by \emph{biting off} a part of it to form \emph{bitten upper approximations}. The new approximations turn out to be disjoint from the negative region of the subset and also possess some nice properties.

The squeezed block approach, as a semantics for a specific tolerance space context, was introduced in \cite{sw3}. The nomenclature is due to the present author. In this approach, taking $\mathcal{T}$ as the set of granules, the authors define the the lower, upper and bitten upper approximation of a $X \subseteq  S$ as follows:
\begin{align*}
X^{l_s} \, =\,\bigcup\{A:\,A \subseteq X\,\&\,A\in \mathcal{T}\}  \tag{sq-lower}\\
X^{u_{s}} \, =\,\bigcup\{A:\,A\,\cap\,X\,\neq\,\emptyset \,\&\, A\in \mathcal{T}\} \tag{sq-upper}\\
X^{u_{sb}} \, =\,\bigcup\{A:\,A\,\cap\,X\,\neq\,\emptyset \,\&\, A\in \mathcal{T}\}\,\setminus (X^{c})^{l} \tag{sqb-upper}
\end{align*}

\begin{theorem}
On the complete Boolean algebra with operators \[\left\langle\underline{\wp(S)}, \cup, \cap, l_s, u_{sb}, ^c, \bot, \top   \right\rangle\] on the powerset $ \wp (S)$ (with $\bot=\emptyset$ and $\top= \S$), all of the following hold:  
\begin{align*}
a^{u_{sb}}= a^{c l_s c}\tag{S5-Dual} \\
a\subseteq b \longrightarrow a^{l_s} \subseteq b^{l_s}\tag{Monotone}\\
\bot^{l_s}\, =\,\bot\, =\,\bot^{u_{sb}} \tag{Bottom}\\
\top^{l_s}= \top^{u_{sb}}\, =\,\top \tag{Top}\\
a^{l_s}\subseteq a \subseteq a^{u_{sb}}   \tag{Reflexive}\\
x^{l_s}\, =\,x^{l_s l_s}  \tag{Idempotence}\\
(a\cap b)^{l_s} \subseteq a^{l_s}\cap b^{l_s}    \tag{L3}\\
a^{l_s}\cup b^{l_s} \subseteq (a\cup b)^{l_s}    \tag{L4}
\end{align*}
\end{theorem}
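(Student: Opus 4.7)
The plan is to exploit two structural properties of the squeezed block family $\mathcal{T}$: it is closed under arbitrary intersection by construction, and it is a cover of $S$ (because the tolerance $T$ is reflexive, every singleton $\{x\}$ satisfies $\{x\}^2 \subseteq T$ and hence lies in some block, which is itself a squeezed block). I would first derive the S5-Dual identity, because it reduces every assertion about $u_{sb}$ to a corresponding assertion about $l_s$, after which the remaining properties become claims about a single granulation-based lower approximation.

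The routine properties all follow from unwinding the definition of $l_s$ as a union of granules contained in the argument. Monotone, L3 and L4 are immediate: enlarging the argument enlarges the collection of admissible granules, while intersections and unions of arguments contain (respectively, are contained in) each summand. Bottom $\bot^{l_s}=\bot$ holds because no nonempty squeezed block sits inside $\emptyset$, and Top $\top^{l_s}=\top$ holds because $\mathcal{T}$ covers $S$. The reflexive inclusion $X^{l_s}\subseteq X$ is trivial; via S5-Dual, the other half $X\subseteq X^{u_{sb}}$ reduces to $(X^c)^{l_s}\subseteq X^c$, and the Bottom/Top statements for $u_{sb}$ follow analogously.

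The key step is S5-Dual itself: $X^{u_{sb}}=(X^{cl_s})^c$. Unfolding the definition, $X^{u_{sb}}=X^{u_s}\setminus (X^c)^{l_s}$, so the identity is equivalent to $X^{u_s}\cup (X^c)^{l_s}=S$. For this I would argue pointwise: for any $x\in S$, pick a squeezed block $A\in\mathcal{T}$ with $x\in A$; if $A\cap X\neq\emptyset$ then $x\in X^{u_s}$, and otherwise $A\subseteq X^c$, giving $x\in (X^c)^{l_s}$. The only non-routine input is the cover property of $\mathcal{T}$, already established.

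The main obstacle is idempotence $X^{l_sl_s}=X^{l_s}$. One inclusion is reflexivity. For the reverse, the argument relies on the fact that the granules used to build $X^{l_s}$ are themselves elements of $\mathcal{T}$: if $A\in\mathcal{T}$ and $A\subseteq X$, then $A\subseteq X^{l_s}$, so $A$ qualifies as a granule in the union defining $X^{l_sl_s}$; taking the union over all such $A$ yields $X^{l_s}\subseteq X^{l_sl_s}$. The delicate feature is that while $\mathcal{T}$ is closed under intersection, it is generally not closed under union, yet idempotence only needs preservation of individual granules and not of their union as a single squeezed block; this is exactly why the definition of $l_s$ as a \emph{set-theoretic} union of members of $\mathcal{T}$ (rather than as a join inside $\mathcal{T}$) is the right choice.
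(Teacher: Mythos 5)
Your proof is correct. The paper states this theorem without giving a proof of its own (the squeezed-block approximations and their basic properties are imported from the cited bitten-approximation literature), so there is no in-paper argument to deviate from; your verification is the natural direct one and it is sound. The two load-bearing observations are exactly right: first, that $\mathcal{T}$ covers $S$ -- every $x$ lies in a block by reflexivity of $T$ plus extension to a maximal preclique (note this invokes Zorn's lemma when $S$ is infinite, trivial in the finite case), and every block is a squeezed block via a singleton $\Gamma$ -- which gives $X^{u_s}\cup (X^c)^{l_s}=S$ pointwise and hence $X^{u_{sb}}=X^{u_s}\cap\bigl((X^c)^{l_s}\bigr)^c=\bigl((X^c)^{l_s}\bigr)^c$, after which Reflexive, Bottom and Top for $u_{sb}$ follow by complementation as you say; second, that idempotence needs only that each individual granule $A\in\mathcal{T}$ with $A\subseteq X$ satisfies $A\subseteq X^{l_s}$ and therefore re-qualifies in the union defining $X^{l_s l_s}$, not that $X^{l_s}$ itself belongs to $\mathcal{T}$ -- your remark that $l_s$ is a set-theoretic union rather than a join inside $\mathcal{T}$ is precisely the point. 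Two small notes: your opening observation that $\mathcal{T}$ is closed under arbitrary intersections is true but is never used in this theorem (it matters later, for the Alexandrov topology and the double Heyting algebra of Theorem \ref{gra}); and the paper's display for the bitten upper approximation writes $(X^{c})^{l}$, where your reading of the superscript as $l_s$ is the intended one -- with any other reading the S5-Dual identity would fail.
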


\begin{theorem}
If the set of definable objects is defined by $\delta (S)\, =\,\{\cup H :\, H\subseteq \mathcal{T}\} $, then all of the following hold:
\begin{align*}
\emptyset, S\in\delta(S)   \tag{Bounds}\\ 
(\forall A, B\in Delta(S)) A\cup B, \, A\cap B \in \delta(S)   \tag{Closure}\\
\left\langle\underline{\delta(S)},\cup, \cap, \emptyset, S   \right\rangle \text{ is a complete ring of subsets }\tag{Ring}
\end{align*}
In fact $\delta(S)$ with the induced operations forms an Alexandrov topology \cite{sw3}. 
\end{theorem}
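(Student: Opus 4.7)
The plan is to verify the four clauses in sequence, leaning on the fact that $\mathcal{T}$ is by construction closed under arbitrary intersection (an intersection of intersections of blocks is again an intersection of blocks), so that $\delta(S)$ inherits nice closure properties as unions of $\mathcal{T}$-elements.

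First I would handle \textbf{Bounds}. The empty union ($H = \emptyset$) gives $\emptyset \in \delta(S)$. For $S$, I would note that since $T$ is a tolerance (hence reflexive), every point lies in some block $B \in \mathcal{B}$, and each such $B$ is itself in $\mathcal{T}$ (take $\Gamma = \{B\}$), so $S = \bigcup \mathcal{B} \in \delta(S)$.

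Next I would do \textbf{Closure}. Closure under arbitrary union is immediate: if $A_i = \bigcup H_i$ with $H_i \subseteq \mathcal{T}$, then $\bigcup_i A_i = \bigcup(\bigcup_i H_i) \in \delta(S)$. For intersection, the key calculation is that if $a,b \in \mathcal{T}$, say $a = \bigcap \Gamma_a$ and $b = \bigcap \Gamma_b$ with $\Gamma_a,\Gamma_b \subseteq \mathcal{B}$, then $a \cap b = \bigcap(\Gamma_a \cup \Gamma_b) \in \mathcal{T}$. Hence for $A = \bigcup H_1$ and $B = \bigcup H_2$,
\begin{equation*}
A \cap B \;=\; \bigcup\{\,a \cap b : a\in H_1,\; b\in H_2\,\} \;\in\; \delta(S).
\end{equation*}
The same argument scales to arbitrary intersections $\bigcap_i A_i$ by taking, for each choice function $f$ selecting one $a_i \in H_i$ per index, the element $\bigcap_i f(i) \in \mathcal{T}$, and observing that $\bigcap_i A_i$ is the union of all such intersections.

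Then \textbf{Ring} follows immediately: $\langle \delta(S), \cup, \cap, \emptyset, S\rangle$ is a sublattice of $\wp(S)$ bounded by $\emptyset,S$ and closed under arbitrary unions and intersections, so it is a complete ring of subsets. Finally, for the \textbf{Alexandrov} claim, the topology axioms are exactly: contains $\emptyset,S$, closed under finite intersection, closed under arbitrary union — all established above. Alexandrov additionally requires closure under arbitrary intersection of opens, which is precisely the strengthened intersection argument just given.

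The main obstacle, and the only step that needs care, is the arbitrary-intersection calculation for $\delta(S)$: one must be sure that an arbitrary intersection of arbitrary unions of $\mathcal{T}$-elements can be re-expressed as a union of $\mathcal{T}$-elements, which works only because $\mathcal{T}$ itself is closed under arbitrary (not just finite) intersection by its very definition as all intersections of subfamilies of $\mathcal{B}$. Everything else is bookkeeping.
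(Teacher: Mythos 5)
Your proof is correct and follows exactly the route the paper intends (the paper itself omits the argument, deferring to \cite{sw3}, and its proof of the subsequent double Heyting algebra theorem presupposes precisely the facts you establish): closure of $\mathcal{T}$ under arbitrary intersections via $\bigcap_j \bigl(\bigcap \Gamma_j\bigr) = \bigcap\bigl(\bigcup_j \Gamma_j\bigr)$, together with complete distributivity in $\wp(S)$ expressed through choice functions. Your handling of the bounds (empty union for $\emptyset$, reflexivity of the tolerance so that blocks cover $S$) is also the standard one, so there is nothing to add.
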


On the set of definable objects $\delta (S)$, let 
\begin{itemize}
\item{$X\,\rightarrow\,Z \, =\,\bigcup \{B\in \mathcal{T}\,\&\, X \cap B\subseteq Z\}$} 
\item {$X\,\ominus Z \, =\,\bigcap \{B\in \mathcal{T}\,\&\,X\subseteq Z\cup B\}$.}
\end{itemize}
Then the following theorem provides a topological algebraic semantics (\cite{sw3}):
\begin{theorem}\label{gra}
$\left\langle \delta (S),\,\cap,\,\cup,\,\rightarrow,\,\ominus,\,\emptyset,\,S\right\rangle $ is a complete atomic double Heyting algebra. It is also atomistic. 
\end{theorem}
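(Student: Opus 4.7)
The plan is to verify, in order, four ingredients of the statement: the complete bounded distributive lattice structure on $\delta(S)$, the Heyting adjunction that validates the formula for $\rightarrow$, the dual Heyting (co-Heyting) adjunction that validates the formula for $\ominus$, and finally atomicity together with atomisticity. The underlying lattice comes essentially for free from the preceding theorem: $\delta(S)$ is a ring of sets and in fact the open-set lattice of an Alexandrov topology, so it is closed under arbitrary unions and arbitrary intersections. Joins and meets therefore coincide with set-theoretic union and intersection, and complete infinite distributivity is inherited pointwise from $\wp(S)$. Boundedness by $\emptyset$ and $S$ is built into the definition.

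For the Heyting adjunction I would show directly that $A\cap X \subseteq Z \iff A \subseteq X\rightarrow Z$ for all $A, X, Z\in\delta(S)$. The ``$\Leftarrow$'' direction is pointwise: any $a\in A\cap X\subseteq X\rightarrow Z$ lies in some $B\in\mathcal{T}$ with $X\cap B\subseteq Z$, hence $a\in Z$. For ``$\Rightarrow$'', write $A=\bigcup H$ with $H\subseteq\mathcal{T}$; each $B\in H$ satisfies $X\cap B\subseteq A\cap X\subseteq Z$, so $B$ appears in the family defining $X\rightarrow Z$, giving $A\subseteq X\rightarrow Z$. Membership $X\rightarrow Z\in\delta(S)$ is immediate, being a union of squeezed blocks.

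The co-Heyting adjunction $X\ominus Z \subseteq Y \iff X\subseteq Z\cup Y$ is the main technical obstacle. Membership $X\ominus Z\in\mathcal{T}\subseteq\delta(S)$ is clean: since $\mathcal{T}=\{\cap\Gamma:\Gamma\subseteq\mathcal{B}\}$ is closed under arbitrary intersections, the defining intersection lies in $\mathcal{T}$, and in particular $X\subseteq Z\cup (X\ominus Z)$, which yields the easy ``$\Rightarrow$'' direction. The subtle ``$\Leftarrow$'' requires a refinement argument: given $Y=\bigcup K\in\delta(S)$ with $X\subseteq Z\cup Y$, one must exhibit a single $B\in\mathcal{T}$ contained in $Y$ with $X\subseteq Z\cup B$, so that $X\ominus Z\subseteq B\subseteq Y$. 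Since $\mathcal{T}$ is closed only under intersections, not unions, I would bridge the gap via the Alexandrov structure: for each $x\in X\setminus Z$ pick the minimal squeezed block $U_x=\bigcap\{B\in\mathcal{T}:x\in B\}\in\mathcal{T}$, verify $U_x\subseteq Y$ whenever $x\in Y$, and argue that any $B\in\mathcal{T}$ appearing in the intersection defining $X\ominus Z$ must contain each such $U_x$, so that $X\ominus Z\subseteq\bigcup_{x\in X\setminus Z}U_x\subseteq Y$. This is the step I expect to be hardest and most delicate.

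Atoms and atomisticity I would handle at the end. The minimal open Alexandrov neighborhoods $U_x$ introduced above are natural candidates for atoms of $\delta(S)$: each is nonempty and contains no strictly smaller nonempty element of $\delta(S)$, since any $A\in\delta(S)$ with $x\in A$ satisfies $U_x\subseteq A$. For atomicity, every nonempty $A\in\delta(S)$ contains $U_x$ for any $x\in A$. For atomisticity, $A=\bigcup\{U_x:x\in A\}$ follows from the inclusion $U_x\subseteq A$ whenever $x\in A$ together with the trivial $A\subseteq\bigcup\{U_x:x\in A\}$. Combined with the earlier items, this completes the theorem.
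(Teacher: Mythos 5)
Your lattice-theoretic groundwork and the Heyting half are sound, and in fact more careful than the paper's own proof, which merely asserts that $P=\bigcup\{B\in\mathcal{T}: X\cap B\subseteq Z\}$ is the greatest element with $X\cap P\subseteq Z$; your observation that every $Y\in\delta(S)$ decomposes into members of $\mathcal{T}$, each of which then enters the defining family, is exactly the missing justification, and restricting the union to $\mathcal{T}$ is harmless there. But the co-Heyting step -- which you rightly flagged as the delicate one -- contains an inclusion reversal that cannot be repaired. From ``every $B\in\mathcal{T}$ in the family defining $X\ominus Z$ contains each $U_x$'' you may conclude only $\bigcup_{x\in X\setminus Z}U_x\subseteq\bigcap\{B\in\mathcal{T}: X\subseteq Z\cup B\}=X\ominus Z$, the \emph{opposite} of the inclusion $X\ominus Z\subseteq\bigcup_x U_x$ you need; to get your direction you would need $\bigcup_x U_x$ itself to lie in $\mathcal{T}$, and $\mathcal{T}$ is not union-closed. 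Worse, the adjunction is genuinely false for the operation $\ominus$ as defined. Take $S=\{a,b,c,d\}$ with the tolerance generated by $aTb$, $bTc$, $cTd$, so that $\mathcal{B}=\{\{a,b\},\{b,c\},\{c,d\}\}$ and $\mathcal{T}$ consists of $\{a,b\},\{b,c\},\{c,d\},\{b\},\{c\},\emptyset$ (and $S$). With $X=\{a,b\}\cup\{c\}=\{a,b,c\}\in\delta(S)$ and $Z=\{b\}$, the only member of $\mathcal{T}$ containing $X\setminus Z=\{a,c\}$ is $S$, so $X\ominus Z=S$; yet $Y=\{a,b,c\}\in\delta(S)$ satisfies $X\subseteq Z\cup Y$ while $S\not\subseteq Y$. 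Your set $W=\bigcup_{x\in X\setminus Z}U_x$ (here $\{a,b,c\}$) is in fact the correct co-residual -- it is definable, satisfies $X\subseteq Z\cup W$, and is contained in every definable $Y$ with $X\subseteq Z\cup Y$ -- so your own machinery proves the theorem once $\ominus$ is redefined as $W$, or equivalently as the intersection over all of $\delta(S)$ rather than over $\mathcal{T}$. The paper's proof simply asserts that $Q=\bigcap\{B\in\mathcal{T}: X\subseteq Z\cup B\}$ is least with $X\subseteq Q\cup Z$, which the same example refutes; $Q$ is only least \emph{within} $\mathcal{T}$.

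Your atom argument has a second, independent gap, and here you inherit a flaw from the paper itself. You argue that $U_x$ is an atom ``since any $A\in\delta(S)$ with $x\in A$ satisfies $U_x\subseteq A$'' -- but a nonempty definable $A\subsetneq U_x$ need not contain $x$, so this shows nothing. In the same example $U_b=\{b\}\subsetneq\{a,b\}=U_a$ with both in $\delta(S)$, so $U_a$ is not an atom; the atoms are $\{b\}$ and $\{c\}$, and $\{a,b\}$ is not a union of atoms, so $\delta(S)$ there is not even atomistic. The paper's proof makes the identical unjustified identification of $\{A_x: x\in S\}$ with the atom set, so the final clause of the theorem fails as stated; what survives is that every nonempty definable set contains some $U_x$, which in the finite case (where minimal elements among the $U_x$ exist) yields atomicity, while in infinite tolerance spaces even atomicity can fail along an infinite strictly descending chain of minimal neighborhoods. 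In short: your proposal faithfully tracks the paper's architecture and improves its Heyting half, but the two steps the paper leaves unverified -- the co-adjunction for the displayed $\ominus$ and the atom identification -- are precisely the ones that break, and your sketch does not close either.
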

\begin{proof}
\begin{itemize}
\item {By the previous theorem, $\delta(S)$ is also infinitely join and infinitely meet distributive lattice.}
\item {The set $P=\bigcup \{B;\, B\in \mathcal{T}\& X\cap B\subseteq Z\}$ is also the greatest element in the set that satisfies 
$X\cap P \subseteq Z$.}
\item {The set $Q=\bigcap \{B;\, B\in \mathcal{T}\& X\subseteq Z \cup B\}$ is also the least element in the set that satisfies 
$X \subseteq Q\cup Z$.}
\item {The axioms of a complete double Heyting algebra can be verified from this. }
\item {The least granule and definite set containing an element $x$ is $A_x\, =\,\bigcap \{A ; x\in A \in \mathcal{T}\}$. It is also the least neighborhood of $x$ in the Alexandrov topology.}
\item {So $\{A_x: \, x\in S\}$ is the least base for the topology and the set of atoms of the lattice $\delta(S)$. Therefore, every $Z\in \delta(S)$ must contain a set of the form $A_x$. This proves that the double Heyting algebra is atomic.}
\end{itemize}
\end{proof}

\begin{proposition}
$\left\langle \delta (S),\,\cap,\,\cup,\,\rightarrow,\,\ominus,\,\emptyset,\,S\right\rangle $ is not regular and does not satisfy weak law of excluded middle. 
\end{proposition}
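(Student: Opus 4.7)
The plan is to refute both statements by exhibiting a single explicit finite counterexample. First I would extract from the definition of $\rightarrow$ in Theorem \ref{gra} the working formula for the Heyting pseudocomplement: setting $Z = \emptyset$ in the expression for $X \rightarrow Z$ gives
\[\neg x \;=\; x \rightarrow \emptyset \;=\; \bigcup\{B \in \mathcal{T} : B \cap x = \emptyset\},\]
that is, the union of those squeezed blocks that miss $x$. This is the only analytical input needed; everything else is combinatorial bookkeeping on a small concrete tolerance space.

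Next I would take $S = \{1,2,3,4,5,6\}$ with a tolerance relation whose maximal blocks are $\mathcal{B} = \{\{1,2,3\},\,\{3,4,5\},\,\{5,6\}\}$. Forming all intersections of sub-families of $\mathcal{B}$ yields the squeezed-block family
\[\mathcal{T} \;=\; \{\{1,2,3\},\,\{3,4,5\},\,\{5,6\},\,\{3\},\,\{5\}\},\]
and $\delta(S)$ is then the collection of arbitrary unions of members of $\mathcal{T}$. In particular $\{5\} \in \delta(S)$, so it is an admissible test element.

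Finally I would evaluate the pseudocomplements at $x = \{5\}$. The squeezed blocks disjoint from $\{5\}$ are $\{1,2,3\}$ and $\{3\}$, whence $\neg x = \{1,2,3\}$. The squeezed blocks disjoint from $\{1,2,3\}$ are $\{5,6\}$ and $\{5\}$, whence $\neg\neg x = \{5,6\}$. Hence $\neg\neg x = \{5,6\} \neq \{5\} = x$, which refutes regularity, and
\[\neg x \cup \neg\neg x \;=\; \{1,2,3\} \cup \{5,6\} \;=\; \{1,2,3,5,6\} \;\subsetneq\; S,\]
since $4$ is omitted, which refutes the weak law of excluded middle.

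The only real obstacle is the combinatorial design of the blocks so that some element (here, $4$) lies in \emph{exactly one} squeezed block, namely $\{3,4,5\}$, and that block meets both $x = \{5\}$ and $\neg x = \{1,2,3\}$. This single structural feature makes $4$ inaccessible to both $\neg x$ and $\neg\neg x$, forcing both failures at the same witness. Once this arrangement is in hand, the verification is mechanical from the formula for $\neg$ above.
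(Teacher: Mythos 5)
You should know at the outset that the paper states this proposition without any proof, so there is no official argument to compare against; your counterexample has to be judged on its own terms. The \textsf{WLEM} half of it is correct. Your tolerance (edges within $\{1,2,3\}$, $\{3,4,5\}$, $\{5,6\}$) does have exactly those three maximal blocks, your $\mathcal{T}$ is right up to the harmless omission of $\emptyset = \{1,2,3\}\cap\{5,6\}$, the formula $\neg x = \bigcup\{B\in\mathcal{T} : B\cap x = \emptyset\}$ is the correct specialization of the paper's $\rightarrow$, and the computation $\neg\{5\} = \{1,2,3\}$, $\neg\neg\{5\} = \{5,6\}$, $\neg\{5\}\cup\neg\neg\{5\} = \{1,2,3,5,6\}\neq S$ checks out; the element $4$ is indeed shielded inside the unique squeezed block $\{3,4,5\}$. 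A single finite counterexample is also the right instrument, since the proposition can only mean that the properties fail in general (for an equivalence $T$ the algebra is Boolean).

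The gap is in the regularity half, and it is a gap of interpretation with mathematical consequences. You read ``regular'' as $(\forall x)\,\neg\neg x = x$; but for double Heyting and double Stone algebras in the rough set literature (the tradition this proposition sits in, where classical rough sets yield \emph{regular} double Stone algebras) ``regular'' standardly means the determination condition: $x^{*}=y^{*}$ and $x^{+}=y^{+}$ imply $x=y$, where $x^{*}=x\rightarrow\emptyset$ and $x^{+}=S\ominus x$ is the dual pseudocomplement. Two signs point to that reading here: first, under your reading ``not regular'' just says ``not Boolean,'' which is an immediate corollary of \textsf{WLEM} failing (a Heyting algebra with all elements regular is Boolean), so your first verification is redundant given your second; and second --- decisively --- your own $13$-element algebra \emph{is} regular in the determination sense: tabulating all pairs $(x^{*},x^{+})$, e.g.\ $\{3\}\mapsto(\{5,6\},S)$, $\{5\}\mapsto(\{1,2,3\},S)$, $\{3,5\}\mapsto(\emptyset,S)$, $\{1,2,3\}\mapsto(\{5,6\},\{3,4,5,6\})$, and so on, gives $13$ distinct pairs, so your example cannot refute regularity in that sense. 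The repair is not hard: take the tolerance on $\{1,2,3,4,5\}$ with maximal blocks $\{1,2,3\},\{1,2,4\},\{1,5\}$, so that $\mathcal{T}=\{\{1,2,3\},\{1,2,4\},\{1,5\},\{1,2\},\{1\}\}$; then $x=\{1\}$ and $y=\{1,2\}$ are distinct opens with $x^{*}=y^{*}=\emptyset$ (every squeezed block contains $1$) and $x^{+}=y^{+}=S$ (the open hulls of $\{2,3,4,5\}$ and of $\{3,4,5\}$ both equal $S$), killing determination-regularity. Since that space satisfies \textsf{WLEM}, take its disjoint union with your six-point space: both regularity (a quasi-identity) and \textsf{WLEM} (an identity) fail in a product precisely when they fail in a factor, so the combined space witnesses the full proposition under either reading of ``regular.''
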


\subsection{Squeezed Tolerance Algebra}

The connection with Tarski algebras and spaces is considered next:

\begin{definition}
\begin{itemize}
\item {Let $T_{\mathcal{T}}(S) = \{U:\, (\exists W\in \mathcal{T})(\exists H\subseteq W)\, U= W^c \cup H\}$}
\item {Define a binary operation $\cdot$ by \[(\forall A, B\in T_{\mathcal{T}(S)})\, A\cdot B = A^c \cup B\] }
\item {$\mathfrak{T}(S) = \left\langle\underline{T_{\mathcal{T}}(S)}, \cdot  \right\rangle$ will be called the \emph{presqueezed tolerance algebra}.}
\item {On $\mathfrak{T}(S)$, the operations $l_s,\, u_s, \, u_{bs}$ can be defined.  }
\item {$\mathfrak{T}^*(S) = \left\langle\underline{T_{\mathcal{T}}(S)}, \cdot, l_s, u_{bs},   \right\rangle$ will be called the \emph{squeezed tolerance algebra} (STA).}
\end{itemize}
\end{definition}

\begin{theorem}
In the above definition, 
\begin{itemize}
\item {all of the operations $\cdot, \, l_s,\, u_s,\, u_{bs}$ are well defined.}
\item {A presqueezed measure algebra is a Tarski algebra.}
\item {On $\mathfrak{T}^*(S)$, it is not possible to define a Boolean algebra structure in general.}
\end{itemize}
\end{theorem}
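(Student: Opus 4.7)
The plan is to treat the three bullets separately, leveraging the Tarski-set duality of Section~5 for the first two and reducing the third to a small concrete counterexample.

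For the first bullet, I would first observe that $\langle S,\mathcal{T}\rangle$ is a Tarski set in the sense of the earlier section (and a CAS whenever the blocks cover $S$, which they do when $T$ is reflexive); consequently $T_{\mathcal{T}}(S)$ coincides with $\Delta(S)$ from the duality construction. Well-definedness of $\cdot$ then reduces to showing $T_{\mathcal{T}}(S)$ is closed under $A\mapsto A^c\cup B$, which is exactly the content of the Tarski-subalgebra-of-sets theorem recalled before Section~6. For $l_s,u_s,u_{bs}$, well-definedness as $\wp(S)$-valued maps is immediate from the set-theoretic definitions (each is a union of a uniquely determined collection, and $u_{bs}$ is a difference of already-well-defined sets); I would record the further fact that $X^{l_s}\in\delta(S)$ by construction, so the operators are in particular well-defined on $T_{\mathcal{T}}(S)$ as a subset of $\wp(S)$.

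For the second bullet, the work is done by the same duality theorem: the image $\Delta(S)=T_{\mathcal{T}}(S)$ of the Tarski set $\langle S,\mathcal{T}\rangle$ is automatically a Tarski subalgebra of the canonical set-Tarski-algebra $\langle \wp(S),\cdot,S\rangle$. I would still verify the four Tarski-algebra equations T1--T4 directly to keep the presentation self-contained, noting that $1a=S^c\cup a=a$, $aa=a^c\cup a=S$, and T3, T4 are elementary Boolean-set identities, all of which are inherited from $\wp(S)$ once closure under $\cdot$ is in hand.

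The third bullet is where the real work lies, and it is the main obstacle. The goal is to show that no Boolean-algebra structure extending the Tarski operation $\cdot$ and the approximation operators can live on $T_{\mathcal{T}}(S)$ in general. The plan is to exhibit a tolerance space in which $T_{\mathcal{T}}(S)$ fails to be closed under set-complementation; since in a Boolean algebra of sets compatible with $A\cdot B=A^c\cup B$ one can recover $A^c$ as $A\cdot\emptyset$, such failure rules out a compatible Boolean structure. A concrete witness is $S=\{a,b,c\}$ with tolerance generated by $aTb$ and $bTc$; the blocks are $\{a,b\}$ and $\{b,c\}$, the squeezed-block collection is $\mathcal{T}=\{\{a,b\},\{b,c\},\{b\}\}$, and a direct enumeration gives
\[
T_{\mathcal{T}}(S)=\{\{a\},\{c\},\{a,b\},\{a,c\},\{b,c\},S\},
\]
which omits $\emptyset$ and $\{b\}=\{a,c\}^c$, so is not closed under complementation. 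I would then conclude by remarking that this is not a pathology of approximation: the operators $l_s$ and $u_{bs}$ are not De~Morgan duals on $T_{\mathcal{T}}(S)$ either -- $u_{bs}$ subtracts $(X^c)^{l_s}$ by design -- so a Boolean structure respecting the full signature of $\mathfrak{T}^*(S)$ is doubly precluded. The subtlety to flag is that if one drops compatibility requirements and only demands a Boolean structure on the underlying set of $T_{\mathcal{T}}(S)$, cardinality is the only obstruction; hence the qualifier \emph{in general} in the statement should be interpreted as ``compatibly with the given operations'', and the example above is the required witness.
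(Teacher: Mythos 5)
Your proposal is correct, and for the first two bullets it follows essentially the paper's own route: closure of $T_{\mathcal{T}}(S)$ under $\cdot$ via its identification with the dual $\Delta(S)$ of the Tarski set $\left\langle S, \mathcal{T}\right\rangle$, followed by a direct verification of the Tarski axioms, which is precisely what the paper's proof does. Where you genuinely diverge is the third bullet. The paper disposes of it by citing Theorem~\ref{triv} (the observation that $\Delta_{\mathcal{K}}(X)$ is a Boolean algebra of clopen sets exactly in the degenerate situation $X\in \mathcal{K}$, i.e.\ when the whole space belongs to the cover), which is terse and leaves the reader to supply a witness for the negative claim; your explicit tolerance space on $\{a,b,c\}$ with $\mathcal{T}=\{\{a,b\},\{b,c\},\{b\}\}$ supplies exactly that witness, and your enumeration $T_{\mathcal{T}}(S)=\{\{a\},\{c\},\{a,b\},\{a,c\},\{b,c\},S\}$ checks out: $\emptyset$ and $\{b\}=\{a,c\}^c$ are absent, so set complementation fails. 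In fact your example is stronger than you claim: $|T_{\mathcal{T}}(S)|=6$ is not a power of $2$, so even an \emph{abstract} Boolean structure on the carrier is impossible here, which closes the loophole you flag in your final caveat. Your approach buys a self-contained, finitely checkable argument; the paper's buys brevity by delegating to the duality machinery.

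One point deserves explicit flagging. On well-definedness of $l_s, u_s, u_{bs}$ you claim only that they are well defined as $\wp(S)$-valued maps, whereas the paper asserts the stronger closure property that $T_{\mathcal{T}}(S)$ contains all elements of the form $A^{l_s}$, $A^{u_s}$ and $A^{u_{bs}}$. Your own witness shows the paper's stronger claim needs care: there $\{a\}\in T_{\mathcal{T}}(S)$ but $\{a\}^{l_s}=\emptyset\notin T_{\mathcal{T}}(S)$ (and allowing the empty intersection convention $\bigcap \emptyset = S$, which would repair this by forcing $S\in\mathcal{T}$, would instead make $T_{\mathcal{T}}(S)=\wp(S)$ and destroy the third bullet). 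So your weaker reading is the defensible one; but then, for $\mathfrak{T}^*(S)$ to be an algebra in the usual total sense, the closure issue should be addressed explicitly, e.g.\ by adjoining $\emptyset$ to the carrier or reading the approximation operators as partial operations.
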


\begin{proof}
\begin{itemize}
\item {That $\mathfrak{T}(S)$ is closed under $\cdot$ follows from the representation of Tarski algebras and definition of its base set.}
\item {From the construction of $T_{\mathcal{T}}(S)$, it is clear that it contains all elements of the form $A^{l_s}, \, A^{u_s}$ and $A^{u_{bs}}$. So the induced operations are well defined.}
\item {The verification of the Tarski algebra axioms is as follows:
\begin{itemize}
\item {$(\forall A\in T_{\mathcal{T}}(S))\,A\cdot A = A^c\cup A = S = 1\in T_{\mathcal{T}}(S)$}
\item {$(\forall A, B, C\in T_{\mathcal{T}}(S))\,(A\cdot B)\cdot (A\cdot C) = (A^c\cup B)^c\cup(B^c\cup C) = A^c\cup (B^c\cup C)$}
\item {$(\forall A, B \in T_{\mathcal{T}}(S))\,(A\cdot B)\cdot B = (A^c\cup B)^c \cup B = (A \cap B^c)\cup B = (A\cup B)\cap (B\cup B^c) = (B\cdot A)\cdot A$.}
\end{itemize}}
\item {The proof of the last assertion is in Thm. \ref{triv}}
\end{itemize}
\end{proof}

\begin{proposition}
$\mathcal{T}$  and $\delta(S)$ are not closed under the operation $\cdot$ in general. 
\end{proposition}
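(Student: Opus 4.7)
My plan is to settle the proposition by exhibiting small explicit witnesses, since a single failure per structure suffices, after which a conceptual explanation can be offered as a gloss.

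The key observation driving both halves is that the implication operation encodes set-complementation through the identity $A \cdot \emptyset = A^c \cup \emptyset = A^c$. Consequently, any subcollection of $\wp(S)$ that contains $\emptyset$ and is closed under $\cdot$ must already be closed under complementation. Since $\delta(S)$ always contains $\emptyset$ (as the empty union of squeezed blocks), and $\mathcal{T}$ contains $\emptyset$ whenever the tolerance has two disjoint blocks, this reduces the non-closure question to the failure of complementation-closure in a generic example.

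For $\mathcal{T}$, I would use $S = \{1,2,3\}$ with the identity tolerance. The blocks are the singletons, so $\mathcal{T} = \{\emptyset, \{1\}, \{2\}, \{3\}, S\}$ (with $\emptyset$ arising as an intersection of two distinct singletons and $S$ as the empty intersection). A direct calculation then gives
\[
\{1\} \cdot \{2\} = \{1\}^c \cup \{2\} = \{2,3\},
\]
which is not in $\mathcal{T}$, refuting closure for the first structure.

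For $\delta(S)$ the preceding witness is unusable, since there $\delta(S) = \wp(S)$ is trivially closed under $\cdot$. I would instead take $S = \{a,b,c\}$ with the tolerance whose blocks are $\{a,b\}$ and $\{b,c\}$. Then the squeezed blocks are $\{a,b\}, \{b,c\}, \{b\}, S$, and $\delta(S) = \{\emptyset, \{b\}, \{a,b\}, \{b,c\}, S\}$. Every nonempty element of $\delta(S)$ contains $b$, hence $\{c\} \notin \delta(S)$, and the computation $\{a,b\} \cdot \emptyset = \{c\}$ delivers the required counterexample.

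The main obstacle, such as it is, lies in not overreaching with a single universal witness: the identity tolerance on three points refutes closure for $\mathcal{T}$ yet collapses $\delta(S)$ to $\wp(S)$, whereas the three-point two-overlapping-block example refutes closure for $\delta(S)$ but (as one can check by running through all pairs) keeps $\mathcal{T}$ closed under $\cdot$. So I plan to treat the two cases separately with tailored witnesses rather than seek a unified example. As a closing remark I would note that the conceptual reason behind both failures is the same: closure under $\cdot$ together with $\emptyset$ would force complementation-closure, which by Theorem \ref{gra} would collapse the Alexandrov topology $\delta(S)$ into a Boolean algebra of sets --- contradicting the non-regularity already recorded in the proposition just before this one.
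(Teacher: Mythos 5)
Your proposal is correct and follows exactly the route the paper intends: its proof consists solely of the remark that ``the proposition can be proved by easy counterexamples,'' and your two three-element tolerance spaces (the identity tolerance giving $\{1\}\cdot\{2\}=\{2,3\}\notin\mathcal{T}$, and the two-overlapping-block tolerance giving $\{a,b\}\cdot\emptyset=\{c\}\notin\delta(S)$) are precisely such witnesses, correctly computed. Your additional observation that no single example serves both structures --- since the identity tolerance collapses $\delta(S)$ to $\wp(S)$ --- is a worthwhile detail the paper leaves implicit.
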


\begin{proof}
The proposition can be proved by easy counterexamples.
\end{proof}

\section{Conclusions: Further Directions}

This study along with the results proved confirm that 
\begin{itemize}
\item {dependencies in rough sets can be compared with those in measurable spaces and specifically in probability.}
\item {relations between dependencies (in a few perspectives) in rough sets can be compared with those in measurable spaces and specifically in probability.}
\item {the comparison requires perspectives of its own - this can be read as success or failure.}
\item {SIA and ASIA can be used for direct embeddings.}
\item {analogical comparison using rough membership functions and subjective or measure theoretic probability functions is not well grounded.}
\item {decision making using rough sets can be improved through the duality based approach of this paper (especially in contexts where ratios of likelihood are more sensible).}
\end{itemize}

To compare dependencies across a possibly enhanced Tarski algebra from a rough semantics with a Tarski algebra from a measurable space, one simply needs to use semi-morphic or morphic embeddings of the former into the latter to specify the \emph{interpretation}. If a whole set of embeddings can be specified, then the comparison would be on a better footing.

The only obvious \emph{relation between dependencies} is that of implication (whatever that means) in the context of the duality. The following meta statement can be asserted.

\begin{proposition}
A dependence of the form $\beta_i(A, B)$ in a Tarski algebra derived from a rough set semantics need not be preserved by a morphism between Tarski algebras in the sense that \[\varphi(\beta_i(A, B) ) = \beta_i (\varphi (A), \varphi (B))\] 
\end{proposition}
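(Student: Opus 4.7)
The approach is to establish the proposition by exhibiting an explicit counterexample, supported by a conceptual observation that explains why failure of preservation is in fact generic. The key point is that $\beta_i$ is not a term in the Tarski-algebra signature: its definition invokes set intersection, the lower approximation operator $l$, a granulation $\tau(S)$, and the collection $\nu(S)$ of definite elements, none of which belong to the signature $\langle \cdot , 1\rangle$ of a Tarski algebra. A morphism in $\mathfrak{ST}$ or $\mathfrak{HT}$ is only required to preserve $\cdot$ and $1$ (plus monotonicity in the semi-morphism case), so a priori it carries no information about granules or approximations, and there is no reason to expect it to commute with $\beta_i$.

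Concretely, I would work with a small finite covering approximation space, for instance $(S,\mathcal{C})$ with $\underline{S}=\{a,b,c\}$ and the non-partition cover $\mathcal{C}=\{\{a,b\},\{b,c\}\}$, and form its associated Tarski algebra $T_1=\Delta(S)$ via the prescription of the previous section. Choose $A=\{a,b\}$ and $B=\{b,c\}$; then no granule in $\mathcal{C}$ lies below both $A$ and $B$, so in the rough semantics $\beta_i(A,B)=\emptyset$ although $A\cap B=\{b\}\neq\emptyset$. Separately, take a second CAS $(S',\mathcal{C}')$ whose granulation is fine enough (for example the discrete cover by singletons), so that in its rough semantics $\beta_i(A',B')$ reduces essentially to $A'\cap B'$. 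Via the duality $\mathfrak{HT}\simeq\mathfrak{TF}^{op}$ from \cite{sclc2008}, specify a Tarski homomorphism $\varphi: T_1\to T_2$ coming from a suitable T-partial map between the spectra that sends $\{b\}$ to a nonempty definable atom. Then $\beta_i(\varphi(A),\varphi(B))=\varphi(A)\cap\varphi(B)\neq\emptyset$, while $\varphi(\beta_i(A,B))=\varphi(\emptyset)=\emptyset$, giving the desired inequality.

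The main obstacle is interpretational rather than computational: for the proposed identity to be meaningfully stated on both sides, the codomain must itself carry a rough structure so that $\beta_i(\varphi(A),\varphi(B))$ makes sense, even though $\varphi$ is only a Tarski-algebra map and does not transport granulations. I would resolve this by insisting that both $T_1$ and $T_2$ arise from covering approximation spaces via the duality, so that each side is independently equipped with its own $\tau$ and $\nu$ and each occurrence of $\beta_i$ is well-defined in its own context; the role of $\varphi$ is then only to translate elements, not granular data. A secondary and purely routine obstacle is verifying that the chosen $\varphi$ is genuinely a Tarski morphism, which is handled by constructing it from a T-partial function so that membership in $\mathfrak{HT}$ is automatic from the duality and no algebraic axiom check is needed. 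Together these observations yield the meta statement, and in fact suggest a stronger pattern: a Tarski morphism preserves $\beta_i$ only when it happens to be compatible with the granulation, which is an extra hypothesis outside the category $\mathfrak{HT}$.
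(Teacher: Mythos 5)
The paper never actually proves this proposition: it is asserted in the Conclusions as a ``meta statement'' with no proof environment attached, so there is nothing to match your argument against line by line. Your conceptual core is exactly the intended one and is correct: $\beta_i$ is defined via $\cap$, the approximation $l$, the granulation $\tau(S)$ and the definite elements $\nu(S)$, none of which are expressible in the signature $\left\langle \cdot, 1\right\rangle$, so a morphism in $\mathfrak{HT}$ or $\mathfrak{ST}$ transports no granular data and there is no reason for it to commute with $\beta_i$. Your insistence that both algebras arise from covering approximation spaces, so that each side carries its own $\tau$ and $\nu$ and each occurrence of $\beta_i$ is independently well-defined, also matches the paper's framing of comparisons via (semi-)morphic embeddings. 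In going beyond the paper by attempting an explicit counterexample, you are doing more than the source does.

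However, the counterexample as written has a genuine defect: the values you compute escape the algebras in which $\varphi$ operates. For $\mathcal{C}=\{\{a,b\},\{b,c\}\}$ one gets $\Delta(S)=\{\{a\},\{c\},\{a,b\},\{b,c\},\{a,c\},S\}$; in particular $\emptyset\notin\Delta(S)$, since $W^c\cup H=\emptyset$ would force $S\in\mathcal{C}$ --- a Tarski algebra is only a join-semilattice with top and generally has no bottom. So $\varphi(\beta_i(A,B))=\varphi(\emptyset)$ is undefined: your computation of $\beta_i$ lives in $\wp(S)$, not in $T_1$. The target has the same problem: the singleton cover on $S'$ yields $\Delta(S')=\{S'\}\cup\{\{x\}^c : x\in S'\}$ (co-singletons plus top), which contains no singleton ``atom'' to serve as the image you describe ($\{b\}\notin T_1$ either), and intersections of elements of $T_2$ need not lie in $T_2$; moreover $\varphi(\emptyset)=\emptyset$ is not automatic for a Tarski morphism even when bottoms exist, since only $\cdot$ and $1$ are preserved. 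The fix is easy and worth making explicit: decouple the cover generating the dual algebra from the granulation computing $\beta_i$. Adjoin $S$ to the cover (then $\Delta(S)\supseteq\{S^c\cup H : H\subseteq S\}=\wp(S)$, so $T_1$ is the full powerset) while keeping $\tau(S)=\{\{a,b\},\{b,c\}\}$ for $\beta_i$, so your computation $\beta_i(A,B)=\emptyset\neq A\cap B$ survives and $\emptyset\in T_1$; do the same on the target with singletons plus $S'$, so $T_2=\wp(S')$ and $\beta_i'=\cap$; then take $\varphi=f^{-1}$ for the constant map $f:S'\longmapsto S$ with value $b$, which is a genuine Tarski homomorphism and gives $\varphi(\beta_i(A,B))=\emptyset\neq S'=\beta_i'(\varphi(A),\varphi(B))$. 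With that repair your argument is complete and fully establishes the proposition.
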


A number of open problems in the context of specific rough sets are motivated by the present paper. The number of results on connections between cover based rough sets and admissible topologies is surprisingly low. Improvements and reformulations of topological results for the context are also motivated by the present study. In the logic of rough sets, one encounters various kinds of approximate implications. Related dualities are not mature enough for the complexity of the problem as of this writing.   
Application to various other rough semantics is also motivated by the study. It is hoped that this research will have much impact on interconnections between the diverse fields of rough sets and probability theory.  

\bibliography{algrough}

\end{document}